\tikzstyle{circledvertex} = [draw, black, shape=circle, minimum size=8pt, inner sep=1pt]
\tikzstyle{invisivertex} = [black, shape=rectangle, minimum size=0pt, inner sep=2pt]
\tikzstyle{point}=[draw, black, fill,shape=circle, minimum size=4pt, inner sep=0pt]
\tikzstyle{label} = [draw, violet, shape=circle, minimum size=8pt, inner sep=1pt]
\tikzstyle{glabel} = [draw, teal, shape=circle, minimum size=4pt, inner sep=1pt]
\tikzstyle{graylabel} = [draw, gray, shape=circle, minimum size=4pt, inner sep=1pt]
\tikzstyle{BVertex}  = [draw, black, fill, shape=circle, minimum size=3pt, inner sep=0pt]
\newtheorem{theorem}[subsection]{Theorem}
\newtheorem{cor}[subsection]{Corollary}
\newtheorem{prop}[subsection]{Proposition}
\theoremstyle{definition}
\newtheorem{definition}[subsection]{Definition}
\newtheorem{remark}[subsection]{Remark}
\newtheorem{example}[subsection]{Example}
\newtheorem{question}[subsection]{Question}
\DeclareMathOperator{\Z}{\mathbb{Z}}
\DeclareMathOperator{\GL}{GL}
\DeclareMathOperator{\R}{\mathbb{R}}
\DeclareMathOperator{\C}{\mathbb{C}}
\DeclareMathOperator{\Q}{\mathbb{Q}}
\DeclareMathOperator{\ch}{\mathcal{C}}
\DeclareMathOperator{\D}{\mathcal{D}}
\DeclareMathOperator{\s}{\mathcal{S}}
\DeclareMathOperator{\A}{\mathcal{A}}
\DeclareMathOperator{\E}{\mathcal{E}}
\DeclareMathOperator{\lat}{\mathcal{L}}
\DeclareMathOperator{\F}{\mathcal{F}}
\DeclareMathOperator{\I}{\mathcal{I}}
\DeclareMathOperator{\T}{\mathcal{T}}
\DeclareMathOperator{\pconf}{PConf}
\DeclareMathOperator{\codim}{codim}
\DeclareMathOperator{\U}{\mathcal{U}}
\DeclareMathOperator{\uu}{\mathfrak{u}}
\DeclareMathOperator{\tpart}{\tilde{\partial}}
\DeclareMathOperator{\Det}{det}
\DeclareMathOperator{\VG}{\mathcal{V}}
\newcommand{\com}{\mathcal{M}}
\newcommand{\oddcom}{\mathcal{M}^{d}}
\newcommand{\dopp}{\mathcal{D}^\mathrm{opp}}
\newcommand{\Oe}{\overline{e}}
\DeclareMathOperator{\sgn}{sgn}
\DeclareMathOperator{\des}{des}
\DeclareMathOperator{\Des}{Des}
\DeclareMathOperator{\WH}{WH}
\DeclareMathOperator{\hilb}{Hilb}
\DeclareMathOperator{\ind}{Ind}
\def\frake{{\mathfrak{e}}}
\subjclass[2010]{05Exx, 20F55, 55R80, 14N20, 52C35}
\keywords{Eulerian idempotents, hyperplane arrangements, Varchenko-Gelfand ring, configuration spaces, Solomon's descent algebra, reflection groups, Bidigare-Hanlon-Rockmore}
\title{Eulerian representations for real reflection groups}
\author{Sarah Brauner}
\address{Department of Mathematics, University of Minnesota, Twin Cities, MN}
\email{braun622@umn.edu}
\begin{document}
\maketitle
\begin{abstract}
The Eulerian idempotents, first introduced for the symmetric group and later extended to all reflection groups, generate a family of representations called the Eulerian representations that decompose the regular representation. In Type $A$, the Eulerian representations have many elegant but mysterious connections to rings naturally associated with the braid arrangement. Here, we unify these results and show that they hold for any reflection group of \emph{coincidental type}---that is, $S_{n}$, $B_{n}$, $H_{3}$ or the dihedral group $I_{2}(m)$---by giving six characterizations of the Eulerian representations, including as components of the associated graded Varchenko-Gelfand ring $\mathcal{V}$. As a consequence, we show that Solomon's descent algebra contains a commutative subalgebra generated by sums of elements with a fixed number of descents if and only if $W$ is coincidental. 
More generally, for any finite real reflection group, we give a case-free construction of a family of Eulerian representations described by a flat-decomposition of the ring $\mathcal{V}$.

\end{abstract}

\section{Introduction}\label{sec:intro}
This paper studies two related families of orthogonal idempotents
within the group algebra $\R W$ of any {\it finite reflection group} $W$, that decompose the regular representation into $W$-representations recurring many times in the literature.

Recall that a reflection group is a finite subgroup $W$ of the general linear group $GL(V)$ for $V=\R^r$, generated by orthogonal {\it reflections} through various {\it reflecting hyperplanes} $H$, each of which is a codimension one linear subspace of $V$.  One then has an associated {\it reflection hyperplane arrangement} $\A=\{H_i \}_{i \in I}$,
and its {\it intersection lattice} $\lat(\A)$, which is simply the collection of all intersection subspaces $X=H_1 \cap \cdots \cap H_m$ of subsets of the hyperplanes.  Work of Saliola \cite{sal2008quiv,saliola2009face,saliola2012eigenvectors}, reviewed in Section \ref{sec:eulhyp} below, associates to each such intersection $X$ an idempotent $\frake_X$ in the \emph{face (Tits) algebra} of $\A$, and
$\{ \frake_X \}_{X \in \lat(A)}$ turn out to give a complete family of orthogonal idempotents for this algebra;
we call these {\it flat idempotents}\footnote{The family of idempotents depends on a choice of \emph{section map}, also to be defined in Section \ref{sec:eulhyp}.} of $\A$.
We group them further into two coarser complete families
of orthogonal idempotents.  Letting
\[
[X]:=\{ Y=wX: w \in W\} \subset \lat(\A)
\]
denote the $W$-orbit of the intersection space $X$, 
we will consider the idempotents
\[
\frake_{[X]} :=\sum_{Y \in [X]} \frake_Y 
\]
as $[X]$ runs through the
$W$-orbits $\lat(\A)/W$  on $\lat(A)$, which we call \emph{flat-orbit idempotents}. The $\frake_{[X]}$ can be realized as idempotents in $\R W$ via a result of Bidigare \cite{bidigare}, and in this case they correspond to idempotents introduced by F. Bergeron, N. Bergeron, Howlett and Taylor in \cite{bergeron1992decomposition}.
There are even coarser
idempotents
\[
\frake_{k} :=\sum_{\substack{Y \in \lat(\A)\\ \dim(Y)=k}} \frake_Y 
\]
for $k=0,1,\dots,r$. 
This last family will be called the {\it Eulerian idempotents} for $W$ and can also be realized in $\R W$. 

Our goal in this paper is to analyze two families of representations.  
First, the \emph{Eulerian representations} $\{ \R W \frake_{k} \}_{0 \leq k \leq r}$ when $W$ is a reflection group of \emph{coincidental type}\footnote{These groups are called \emph{good reflection groups} by Aguiar-Mahajan in \cite{aguiarmahajan}.}; that is, an irreducible finite real reflection group of rank $r$ whose exponents (equivalently, degrees) can be expressed in terms of an \emph{exponent gap} $g$: 
\[1, 1+g, 1+2g, \dots, 1+ (r-1)g. \]
These are exactly reflection groups of Types $A$ and $B$, $H_{3}$, and the dihedral group $I_{2}(m)$. Second, we study the family of representations $\{ \R W \frake_{[X]} \}_{[X] \in 
 \lat(\A)/W}$ induced by the flat orbit idempotents for any real finite reflection group $W$.

\subsection*{Motivating story: Type $A$}
The Eulerian idempotents $\frake_{k}$ described above generalize the \emph{Type A Eulerian idempotents}, which have been studied extensively beginning in the late 1980s, when they were introduced independently by both Reutenauer in \cite{reutenauer1986theorem} and Gerstenhaber-Schack in \cite{Gerstenhaber-Schack}. 

Reutenauer introduced the idempotents $\frake_{k}$ in $\R S_{n}$ as part of his work on the Campbell-Baker-Hausdorff formula. In \cite{garsia}, Garsia and Reutenauer showed that this family of idempotents could be defined via the generating function
\begin{equation}\label{eq:typeaeul} \sum_{k=0}^{n-1} t^{k+1}\frake_{k} = \sum_{w \in S_{n}} \binom{t-1+n-\des(w)}{n}w,\end{equation}
where one defines the \emph{Coxeter descent set} for any Coxeter system $(W,S)$,
\begin{equation}
    \label{descent-set-definition}
\Des(w) := \{ s \in S: \ell(w) > \ell(ws) \}
\end{equation}
and the {\it descent number} 
\[
\des(w) = |\Des(w)|.
\]

By contrast, Gerstenhaber and Schack were interested in giving a Hodge-type decomposition of Hochschild homology, a homology theory for associative algebras. Earlier in \cite{barr}, Barr had defined a ``shuffle product'' $\s(S_{n})$ (\emph{Barr's element}), which can be phrased in the language of descents as\footnote{Barr's element was originally defined by tensoring $\s(S_{n})$ as defined above with the sign representation.}
\[ \s(S_{n}):= \sum_{s \in S} \sum_{\substack{w \in S_{n} \\ \Des(w) \subset \{s \}}} w.\]
Gerstenhaber and Schack built upon Barr's work, proving that $\s(S_{n})$ 1) acts semisimply on $\R S_{n}$ with eigenvalues $\sigma_{k} = 2^{k+1}-2$ for $0 \leq k \leq n-1$ and 2) commutes with the Hochschild boundary operator. Using Lagrange interpolation, they constructed a family of idempotents that are polynomials in $\s(S_{n})$ and for each $k$, project onto the $\sigma_{k}$-eigenspace of $\s(S_{n})$. While it is not obvious that these viewpoints should yield the same results, in \cite{lodayoperations}, Loday shows that these idempotents are precisely the $\frake_{k}$ in \eqref{eq:typeaeul}. It is likewise not immediately apparent that the Saliola construction of the Eulerian idempotents is consistent with either of the above definitions; our work will help elucidate these equivalences.\footnote{In addition, see Aguiar-Mahajan \cite[Sec. 16.11-16.12]{aguiarmahajan} and the references therein.} 

For our purposes, perhaps the most interesting aspect of the Type $A$ Eulerian idempotents are the properties of the $S_{n}$ representations $\R S_{n} \frake_{k}$. In the $k=0$ case, $\R S_{n} \frake_{k} \otimes \sgn_{S_{n}}$ is isomorphic to the top homology of the partition lattice $\Pi_{n}$ (see Barcelo \cite{barcelo1990action}, Joyal \cite{joyal1986foncteurs}, Wachs \cite{wachs1998co}), and $\R S_{n} \frake_{k}$ is isomorphic to the multilinear component of the free Lie algebra on $n$ generators (see Garsia \cite{garsia1990combinatorics}, Reutenauer \cite{reutenauer2001free}). 

Even more surprising is the following ``folklore'' fact: 
\[ \R S_{n} \frake_{k} \cong_{S_{n}} H^{(n-k-1)d}(\pconf_{n}(\R^{d}); \R) \] when $d \geq 3$ and odd, where $\pconf_{n}(\R^{d})$ is the space of $n$ distinct labeled points in $\R^{d}$. This can be deduced by comparing a result of Sundaram and Welker for subspace arrangements \cite[Thm 4.4(iii)]{sundaramwelker} with descriptions of the characters of $\R S_{n} \frake_{k}$ by Hanlon \cite{Hanlon}; see Sundaram \cite[Sec. 2: Thm. 2.2, Eq. 23]{sundaram2018} for history, or Early-Reiner \cite[Eq. 1.1]{Early-Reiner}. The space $H^{*}\pconf_{n}(\R^{d})$ is well-studied and connects the $\frake_{k}$ to other rings associated with the Braid arrangement (to be discussed shortly). These Type $A$ properties are the inspiration for our results.

\subsection*{Hint at a more general phenomenon: Type $B$}
As in Type $A$, the work of Aguiar and Mahajan generalizes earlier work by F. Bergeron and N. Bergeron in \cite{Bergeron-Bergeron} for Type $B$. Like Garsia and Reutenauer, Bergeron and Bergeron define the \emph{Type $B$ Eulerian idempotents} as elements in $\R B_{n}$ using the generating function\footnote{The idempotents that Bergeron and Bergeron define are actually obtained by tensoring the $\frake_{k}$ in \eqref{eq:typebeul} with the sign representation.}
\begin{equation}\label{eq:typebeul}
    \sum_{k=0}^{n}t^{k} \frake_{k} = \sum_{w \in B_{n}} \binom{\frac{t-1}{2} + n - \des(w)}{n} w.
\end{equation}
Like Gerstenhaber and Schack, they show that the $\frake_{k}$ give a Hodge decomposition of Hochschild homology for a commutative hyperoctahedral algebra\footnote{A hyperoctahedral algebra is an algebra with an involutive automorphism.}, although they do not use a Barr-like element to do so.

In \cite{bergeron1991hyperoctahedrallie}, N. Bergeron gives a description of the $B_{n}$ representation $\R B_{n} \frake_{0} \otimes \sgn_{B_{n}}$ as the top homology of the intersection lattice for the Type $B$ hyperplane arrangement\footnote{See Gottlieb-Wachs \cite{wachsdowling} for an alternate proof of this fact.}---thus hinting that the features of the Eulerian representations in Type $A$ might hold more generally. We will show this to be true.

\subsection*{Methods}
Our aim is to describe the Eulerian representations in terms of three closely related spaces:
\begin{enumerate}
    \item The \emph{associated graded Varchenko-Gelfand ring} $\VG$ (to be defined in Section \ref{sec:vargel}, Definition \ref{def:vg}): intuitively, $\VG$ can be thought of as a commutative version of the (better studied) Orlik-Solomon algebra;
    \item The cohomology\footnote{Henceforth, all cohomology and homology groups are assumed to have coefficients in $\R$.} of a ``$d$-dimensionally thickened'' hyperplane complement 
    \[\oddcom_{\A}:= V \otimes \R^{d} - \Big( \bigcup_{H_{i} \in \A} H_{i} \otimes \R^{d} \Big); \textrm{ and} \]
    \item The homology of open intervals $(V,X)$ in $\lat(\A)$:  for each $X$ in $\lat(\A)$, the set-wise $W$-stabilizer subgroup $N_X$ acts on the order complex $\Delta(V,X)$ and on its homology $H_*(V,X)$, which is nonvanishing only in degree $\codim(X)-2$. We will abbreviate the name of its $N_X$-representation\footnote{The notation here refers to the fact that $WH_{X}$ is a summand of \emph{\underline{W}hitney \underline{H}omology}; see Section \ref{sec:WHandGM}.} as
    $$
    \WH_{X}:= H_{\codim(X)-2}(V,X)
    $$
    and define from it an induced $W$-representation
    $$
    \WH_{[X]}:= \ind_{N_{X}}^{W} \WH_{X} \otimes \Det_{V/X}.
    $$
    where $\Det_{V/X}(w)$ is the determinant of $w \in N_{X}$ acting on $V/X$.
\end{enumerate}
The relationship between the associated graded Varchenko-Gelfand ring and Orlik-Solomon algebra is best understood through $\oddcom_{\A}$: in the $d = 2$ case, $\mathcal{M}^{2}_{\A}$ is the complexification of the hyperplane complement $\mathcal{M}^{1}_{\A}$, and $H^{*}(\mathcal{M}^{2}_{\A})$ is (equivariantly) isomorphic to the Orlik-Solomon algebra as a graded ring. A recent result of Moseley in \cite{moseley2017equivariant} shows that when $d \geq 3$ and odd, $H^{*}(\oddcom_{A})$ (equivariantly) describes $\VG$ as a graded ring. 

In the case of the Braid arrangement, $\oddcom_{\A(S_{n})} = \pconf_{n}(\R^{d})$ and there is a description of the cohomology due to F.~ Cohen \cite{cohen} for $d$ of any parity. Similarly, in Type $B$, $\oddcom_{\A(B_{n})}$ is $\pconf_{n}^{\Z_{2}}(\R^{d})$, the $\Z_{2}$ \emph{orbit configuration space} (see Section \ref{sec:config}, Definition \ref{def:orbconfig}) with cohomology presentation given by Xicotencatl \cite{xico} for any $d$.

Our contribution will be to connect all of these spaces---which already have well-known relationships to each other---to the Eulerian idempotents (in all of their guises). In doing so, we will avoid any character computations and rather tie together various equivariant versions of results in the literature, such as work by Aguiar-Mahajan \cite{aguiarmahajan}, Reiner-Saliola-Welker \cite{RSW}, and Sundaram-Welker \cite{sundaramwelker}. The main novelties in our methods are 1) to define generalizations and extensions of Barr's element and study their action on $\R W$ and 2) to further analyze the associated graded Varchenko-Gelfand ring in order to use it as a stepping-stone between other spaces. 

\subsection*{Results for coincidental reflection groups}
In the case of coincidental reflection groups, our primary tool will be a generalization of Barr's element in $\R W$. Define
\[ \s(W):= \sum_{s \in S} \sum_{\substack{w \in W \\ \Des(w) \subset \{s \}}} w.\]
We will show that $\s(W)$ acts semisimply on $\R W$, and when $W$ is coincidental has eigenvalues $\sigma_{0}< \sigma_{1} < \dots < \sigma_{r}$ in $\Z_{\geq 0}$, where $\sigma_{k}$ counts the number of rays (i.e. halfspaces for lines $L$ in $\lat(\A)$) lying in any intersection space $X$ with $\dim(X) = k$.

As a consequence, we are able to determine when the \emph{Eulerian subspace} 
\[ \E(W):=  \Big\{  \sum_{w \in W} c_{w} w : c_{w} = c_{w'} \textrm{ if }  \des(w) = \des(w')  \Big\} \subset \R W \]
is a commutative subalgebra, as it is in Types $A$ and $B$ (see Garsia-Reutenauer \cite{garsia} and Bergeron-Bergeron \cite{Bergeron-Bergeron}).
\begin{theorem}[Theorem \ref{thm:eulsubalg}]
The Eulerian subspace $\E(W)$ is a subalgebra if and only if $W$ is coincidental. Moreover, when the Eulerian subalgebra exists, it is always commutative. 
\end{theorem}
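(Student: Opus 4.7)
The argument rests on the semisimple action of $\s(W)$ on $\R W$ established earlier, together with the Bidigare-Hanlon-Rockmore framework. Writing $D_k := \sum_{w :\, \des(w) = k} w$, so that $D_0 = 1_W$, a short computation gives $\s(W) = r\, D_0 + D_1$, placing $\s(W)$ in $\E(W)$.

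For the forward direction (coincidental implies commutative subalgebra), assume $W$ is coincidental, so that $\s(W)$ has the $r+1$ distinct eigenvalues $\sigma_0 < \sigma_1 < \cdots < \sigma_r$ on $\R W$. Lagrange interpolation then produces $r+1$ orthogonal idempotents
\[
\frake_k \ :=\ \prod_{j \neq k} \frac{\s(W) - \sigma_j}{\sigma_k - \sigma_j}
\]
that sum to $1$. To show each $\frake_k$ lies in $\E(W)$, I would establish a generating-function identity $\sum_k t^k \frake_k = \sum_w f_{\des(w)}(t)\, w$ generalizing \eqref{eq:typeaeul} and \eqref{eq:typebeul}, whose coefficient polynomials $f_j(t)$ depend only on $j = \des(w)$. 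Extracting the coefficient of $t^k$ then puts each $\frake_k$ in $\E(W)$; since the $\frake_k$ are linearly independent and $r+1 = \dim \E(W)$, they form a basis of $\E(W)$. Thus $\E(W)$ is the commutative subalgebra of $\R W$ spanned by the orthogonal idempotents $\frake_0, \ldots, \frake_r$.

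For the reverse direction, suppose $\E(W)$ is a subalgebra. Then $\R[\s(W)] \subseteq \E(W)$, so by semisimplicity $\s(W)$ has at most $\dim \R[\s(W)] \leq r+1$ distinct eigenvalues on $\R W$. By the Bidigare-Hanlon-Rockmore theorem, these eigenvalues are precisely $\{\sigma_X : [X] \in \lat(\A)/W\}$, where $\sigma_X$ is the number of rays contained in $X$. The main obstacle, which I expect to be the hardest step, is to show that for non-coincidental $W$ this set contains strictly more than $r+1$ distinct values, yielding the required contradiction. I would do so by a case analysis across the non-coincidental types $D_n$ ($n \geq 4$), $E_6, E_7, E_8, F_4, H_4$, exhibiting two $W$-orbits of flats of the same dimension whose ray-counts differ — a direct reformulation of the failure of the exponent-gap condition.

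Finally, commutativity of $\E(W)$ whenever it is a subalgebra follows by combining the two directions: $\E(W)$ is a subalgebra only when $W$ is coincidental, and the forward direction has already exhibited it in that case as the span of orthogonal idempotents.
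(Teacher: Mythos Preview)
Your approach is correct and closely parallels the paper's, differing mainly in how the forward direction is packaged.

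For the reverse direction you match the paper exactly: containment $\R[\s(W)] \subseteq \E(W)$ bounds the number of distinct eigenvalues by $r+1$, while any complete flag of flats already produces $r+1$ strictly increasing ray-counts $\sigma_{X_0} < \cdots < \sigma_{X_r}$, so the subalgebra property forces $\sigma_X$ to depend only on $\dim X$. The paper then runs the same type-by-type check you anticipate, with an explicit combinatorial argument in type $D_n$ via the Barcelo--Ihrig parametrization of flats and citations to Orlik--Terao's tables for $E_6, E_7, E_8$.

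For the forward direction the paper's packaging is slightly slicker: rather than building Lagrange idempotents and then checking membership in $\E(W)$ via a generating function, it argues directly that $\E(W)$ is a subalgebra if and only if $\E(W) = \R\s(W)$; commutativity then comes for free since $\R\s(W)$ is singly generated. That said, the paper's own justification of the implication ``ray-count depends only on dimension $\Rightarrow \E(W) = \R\s(W)$'' is elliptical---one still needs a reason why the powers of $\s(W)$, equivalently the interpolation idempotents, land in $\E(W)$. Your generating-function step is exactly this ingredient, and the paper supplies it only later (in proving Theorem~\ref{thm:coincidental}\eqref{7}) via the Aguiar--Mahajan formula (Theorem~\ref{thm:AMcoin}), whose coefficients $\chi(\A^T)/c^T$ depend only on $|T|$ for coincidental $W$. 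So your ``I would establish a generating-function identity'' is the right instinct, but be aware this is where the real content lives: it is not an elementary manipulation but rests on the Aguiar--Mahajan result.
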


Let 
\[ \beta_{W,\ell}:= \frac{1}{|W|} \prod_{i=1}^{\ell} (t-e_{i}) \prod_{i=1}^{r-\ell} (t+e_{i}).\]

Our main theorem is a description of the Eulerian representations.

\begin{theorem}[Theorem \ref{thm:coincidental}]
When $W$ is a coincidental reflection group of rank $r$, for each $0 \leq k \leq r$, the following are equivalent as $W$-representations:
\begin{enumerate}
    \item The $k$-th graded piece of the associated graded Varchenko-Gelfand ring, $\VG^{k}$;
    \item  $H^{k(d-1)}(\oddcom_{\A})$ for $d \geq 3$ and odd;
    \item 
        $\bigoplus_{[X]} \WH_{[X]}$, where the direct sum is over all $[X] \in \lat(\A)/W$ with $\codim(X) = k$;
    \item The $\sigma_{r-k}$ eigenspace of $\s(W)$ in $\R W$;
    \item  The left $\R W$-module $\R W \mathfrak{e}_{r-k}$;
    \item  The left $\R W$-module $\R W E_{r-k}$, 
    where $\{E_k\} \subset \E(W)$ are idempotents defined by
        \[ \sum_{k = 0}^{r} t^{k}E_{k} := \sum_{w \in W} \beta_{W,\des(w)}(t) \cdot w. \]
\end{enumerate}
\end{theorem}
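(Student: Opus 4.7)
The plan is to establish the six equivalences by splitting them into a ``geometric'' half (1)--(3) and an ``algebraic'' half (4)--(6), meeting at a flat-orbit decomposition of $\R W$. The geometric equivalences will hold for any real reflection group $W$, while the coincidental hypothesis enters only in the algebraic half (4)--(6), where it is needed to guarantee that $\s(W)$ has exactly $r+1$ distinct eigenvalues indexed by dimension and that $\E(W)$ is a commutative subalgebra, both established in Theorem \ref{thm:eulsubalg}.

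For the geometric half, (1)$\Leftrightarrow$(2) is essentially Moseley's theorem, which identifies $H^{*}(\oddcom_{\A}) \cong \VG$ as $W$-equivariant graded rings for odd $d \geq 3$, sending the $k$-th graded piece of $\VG$ to cohomological degree $k(d-1)$. The equivalence (1)$\Leftrightarrow$(3) is the flat decomposition of $\VG$ alluded to in the abstract: Saliola's flat idempotents $\frake_{X}$ induce, after grouping into $W$-orbits, an equivariant direct sum decomposition of $\VG$ indexed by $\lat(\A)/W$, and a Goresky-MacPherson style argument identifies the $[X]$-summand of $\VG^{k}$ with $\WH_{[X]}$ when $\codim(X) = k$, where the determinant twist $\Det_{V/X}$ accounts for the orientation behavior along the normal to the stratum $X$.

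For the algebraic half, (4)$\Leftrightarrow$(5) follows from observing that $\s(W) \in \E(W)$, so in coincidental type the Eulerian subalgebra contains $\s(W)$ and all $\frake_{j}$ simultaneously. Since the $\frake_{j}$ form a complete orthogonal system and commute with $\s(W)$, $\R W$ decomposes as $\bigoplus_{j} \R W \frake_{j}$ with $\s(W)$ preserving each summand; counting the eigenvalues $\sigma_{0} < \dots < \sigma_{r}$ then forces $\R W \frake_{r-k}$ to be the $\sigma_{r-k}$-eigenspace. For (5)$\Leftrightarrow$(6), the rational function $\beta_{W,k}(t)$ is designed so that the $E_{k}$ lie in $\E(W)$ and, by a direct computation with the defining generating function, form a complete orthogonal system of idempotents; matching ranks and using uniqueness of such systems in the commutative Eulerian subalgebra yields $E_{k} = \frake_{k}$. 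The bridge (3)$\Leftrightarrow$(5) is then obtained by combining Bidigare's theorem---which lifts the flat-orbit idempotents from the Tits algebra to $\R W$ compatibly with module structure---with the flat decomposition of $\VG$ from step (1)$\Leftrightarrow$(3), yielding $\R W \frake_{r-k} \cong \bigoplus_{[X]:\, \codim(X) = k} \WH_{[X]}$.

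The main obstacle I expect is the spectral analysis of $\s(W)$ in the coincidental case. Showing that its eigenvalues are exactly $\sigma_{0} < \dots < \sigma_{r}$ as described---and counting them via rays in intersection spaces of each dimension---requires a combinatorial argument intertwining the descent statistic with the exponent-gap structure, and this is precisely where coincidental type is essential. A secondary technical issue is verifying that the specific formula for $\beta_{W,k}(t)$ produces genuine idempotents rather than arbitrary elements of $\E(W)$; the rising factorial $\left(\frac{2}{g}\right)_{r}$ in the denominator should be tailored so that substituting appropriate values of $t$ makes the generating function telescope, paralleling the classical proofs of the Type $A$ identity \eqref{eq:typeaeul} and Type $B$ identity \eqref{eq:typebeul}.
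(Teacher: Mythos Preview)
Your geometric half is on target: (1)$\Leftrightarrow$(2) via Moseley is exactly what the paper does, and your ``Goresky--MacPherson style'' argument for (3) is the equivariant Sundaram--Welker formula (Theorem~\ref{thm:GM}(2)) applied to the thickened complement, as in Example~\ref{ex:oddcom}. The paper runs this as (2)$\Leftrightarrow$(3) rather than (1)$\Leftrightarrow$(3), but that is immaterial.

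The algebraic half has two genuine gaps. For the bridge to (4), the paper does \emph{not} go through Bidigare plus a $\VG$ flat decomposition; it invokes the equivariant BHR theorem of Reiner--Saliola--Welker (Theorem~\ref{thm:rsw}), which directly identifies each eigenspace of a semisimple element of $\R\F^{W}$ acting on $\R\ch$ with a sum of $\WH_{[X]}$'s. Combined with Proposition~\ref{prop:barrprop} (the eigenvalue $\sigma_{r-k}$ picks out exactly the codimension-$k$ flats), this gives (3)$\Leftrightarrow$(4) in one stroke. Your route would need an independent argument that the $\VG$ flat pieces match the Saliola idempotent pieces of $\R W$, which is essentially the content of Section~\ref{sec:allcox} and is more work.

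For (4)$\Leftrightarrow$(5), your argument (``the $\frake_{j}$ commute with $\s(W)$, now count eigenvalues'') does not pin down which $\frake_{j}$ corresponds to which $\sigma_{j}$: commutativity in $\E(W)$ only gives $\{\frake_{j}\}=\{\s^{(j)}\}$ as \emph{sets} of primitive idempotents, and you have not yet justified $\frake_{j}\in\E(W)$. The paper instead uses the eigensection machinery (Proposition~\ref{prop:eulmatch}): the uniform section is shown to be the unique eigensection of $\tilde{\s}$, forcing $\tilde{\s}=\sum_{k}\sigma_{k}\frake_{k}$ and hence $\s^{(k)}=\varphi(\frake_{k})$. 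Similarly, for (5)$\Leftrightarrow$(6) the paper does not verify that the $E_{k}$ are idempotents and then appeal to uniqueness; it starts from Aguiar--Mahajan's explicit formula for $\sum t^{k}\frake_{k}$ (Theorem~\ref{thm:AMcoin}), pushes it through $\varphi$, and simplifies via the Chu--Vandermonde identity to obtain $\beta_{W,\ell}(t)$. Your proposed route would have to reproduce that computation anyway to check idempotency, and would still face the labeling ambiguity.
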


Theorem \ref{thm:coincidental} recovers all known descriptions of the Type $A$ and $B$ Eulerian representations, and also implies that the Type $B$ Eulerian representations are isomorphic to the non-trivial pieces of $H^{*}\pconf_{n}^{\Z_{2}}(\R^{d})$ for $d \geq 3$ and odd (Corollary \ref{cor:typebconfig}).

\subsection*{Results for arbitrary finite reflection groups}
We then study the representations $\R W \frake_{[X]}$ for any finite Coxeter group $(W,S)$. As in the coincidental case, we define an element $\T \in \R W$ and show that $\T$ acts semisimply with eigenspaces indexed by flat orbits $[X] \in \lat(\A)/W$. In Theorem \ref{thm:vggrading}, we show that $\VG$ admits a grading by flats. Theorem \ref{thm:allcox} then gives a case-free description of the representation carried by each eigenspace of $\T$ indexed by $[X] \in \lat(\A)/W$ in terms of 
\begin{itemize}
    \item a direct summand $\VG_{[X]}$ of the ring $\VG$ indexed by $[X]$,
    \item the representation $\WH_{[X]}$, and
    \item the representation  $\R W \mathfrak{e}_{[X]}$ generated by a flat-orbit idempotent $\mathfrak{e}_{[X]}$.
\end{itemize}

\subsection*{Outline} 
The remainder of the paper proceeds as follows. Section \ref{sec:hyperplanes} gives relevant background on the theory of hyperplane arrangements; Section \ref{sec:topology} covers necessary topological descriptions of subspace arrangements and defines the Varchenko-Gelfand ring; Section \ref{sec:coincidental} examines the case that $W$ is a coincidental reflection group; Section \ref{sec:allcox} addresses the case that $W$ is any finite reflection group; Section \ref{sec:future} proposes directions for future study.

\section*{Acknowledgements}
The author is very grateful to Victor Reiner for guidance at all stages of this project, to Marcelo Aguiar for illuminating discussions, to Franco Saliola for context on the history of the Eulerian idempotents, to Sheila Sundaram for suggestions on improvements to the exposition, and to Fran\c cois Bergeron, Nantel Bergeron, Christin Bibby, Dan Cohen, Galen Dorpalen-Barry, Alex Miller and Christophe Reutenauer for helpful references. The author also thanks an anonymous referee for their extremely insightful and helpful feedback. The author is supported by the NSF Graduate Research Fellowship (Award Number DMS-0007404).

\section{Real Hyperplane arrangements}\label{sec:hyperplanes}
The theory of real hyperplane arrangements underpins much of this paper; in this section we review the essential details. For a more in-depth study, consult \cite{aguiarmahajan}. 
\subsection{Basics}

A \emph{central hyperplane} $H$ in a vector space $V = \R^{r}$ is a codimension one subspace of $V$; a collection of hyperplanes is a \emph{hyperplane arrangement}, $\A$. All hyperplane arrangements we will consider will be \emph{central} and \emph{essential}, meaning that $\bigcap_{H \in \A} H = \{ 0 \}$ is the \emph{center} of $\A$.

A \emph{flat} $X$ of $\A$ is a subspace of $V$ formed by intersecting a subset of the hyperplanes in $\A$; when $\A$ is essential and central, $V$ and $\{ 0 \}$ are always flats. The collection of all flats of $\A$, ordered by reverse inclusion, defines a lattice $\lat(\A)$ (written $\lat$ when the context of $\A$ is clear.) This lattice is geometric, meaning that it is atomic and semimodular, with minimum element $V$ and maximum element $\{0 \}$. The rank function of $\lat$ is defined by codimension, so that $X \in \lat$ has rank $\codim(X):= r- \dim(X)$; because $\A$ is essential the rank of $\lat$ is $\dim(V)=r.$ Throughout this paper, $\mu$ will denote the M\"{o}bius function of $\lat$.  

For any flat $X \in \lat$, it is possible to define two new hyperplane arrangements. The \emph{localization arrangement of $X$}, $\A_{X}$ is 
\[ \A_{X}:= \{ H: H \in \A \text{ and } X \subset H \}. \]
In $\A_{X}$, $X$ is the maximal element of $\lat(\A_{X})$ and $V$ is the minimal element.
The \emph{restriction arrangement of $X$}, $\A^{X}$ is
\[ \A^{X}:= \{ H  \cap X \in \A: X \not \subset H \}. \]
In this case, the maximal element of $\lat(\A^{X})$ is $\{ 0 \}$ and the minimal element is $X$.

\subsection{The Tits algebra}\label{sec:titsalg}
Every hyperplane $H \in \A$ defines two disjoint open half spaces $H^{+}$ and $H^{-}$ in $V$ with respective closures $\overline{H^{+}}$, $\overline{H^{-}}$, so that  $H = \overline{H^{+}} \cap \overline{H^{-}}$. 
Index the hyperplanes in $\A$ with a set $I$. A \emph{face} of $\A$ is an intersection of the closure of half-spaces
$F = \cap_{i \in I} \overline{H^{\epsilon_{i}(F)}}$, where $\epsilon_{i}(F) \in \{ +, -, \pm \}$ and $\overline{H^{\pm}} := H$. Let $\F = \F(\A)$ be the set of faces of $\A$.
For our purposes, we will say two arrangements $\A, \A^{'}$ are isomorphic if there is a poset isomorphism between the set of faces $\F(\A)$ and $\F(\A^{'})$, ordered by reverse inclusion. 

The \emph{support} of a face $F$ is the smallest flat containing $F$; the \emph{support map} 
\[\mathfrak{s}: \F \to \lat \]
sends each face to its support. The dimension of $F$ is the (vector-space) dimension of $\mathfrak{s}(F)$. A face of maximal dimension (or equivalently, a face whose support is $V$) is called a \emph{chamber}. Let $\ch(\A) = \ch$ be the set of chambers of $\A$.  

The set $\F$ has a semigroup structure, with a product called the \emph{Tits product}.
\begin{definition}[Tits product]
For $F, G \in \F$, let 
\[ \epsilon_{i}(FG) = \begin{cases} \epsilon_{i}(F) &  \epsilon_{i}(F) \neq \pm\\
\epsilon_{i}(G) &  \epsilon_{i}(F) = \pm.
\end{cases}\]
The product $FG$ is then defined to be
\[ FG:= \bigcap_{i \in I}  \overline{H^{\epsilon_{i}(FG)}}. \]
\end{definition}

The Tits product has a geometric interpretation as well. For $F, G \in \F$, 
$FG$ is the first face one enters when moving an infinitesimally small (but nonzero) distance along the straight line segment from any point in the interior of $F$ towards any point in the interior of $G$. 

\begin{example}[Braid arrangement]\label{ex:braid1}
Perhaps the most well-studied hyperplane arrangement is the Braid arrangement $\A(S_{n})$, which consists of hyperplanes $H_{ij}:= \{ x_{i} - x_{j} = 0 \}$ and reflections $(ij) \in S_{n}$ over the $H_{ij}$.
The $H_{ij}$ are defined in $\R^{n}$, but to make the arrangement essential, we must project them into $\R^{n}/ \langle x_{1} + x_{2} + \dots + x_{n} \rangle \cong \R^{n-1}$. In Figure \ref{fig:braid}, the graphic on the left shows the essentialized arrangement $\A(S_{3})$ in $\R^{2}$.

The faces of $\A(S_{3})$ are shown in the graphic on the right of Figure \ref{fig:braid}. The origin (or center), written $\mathcal{O}$ is defined by 
\[\mathcal{O} = \overline{H_{12}^{\pm}} \cap \overline{H_{23}^{\pm}} \cap \overline{H_{13}^{\pm}}. \]
The chambers are $c_{1}, \dots, c_{6}$; the chamber $c_{1} = \overline{H_{12}^{+}} \cap \overline{H_{23}^{+}} \cap \overline{H_{13}^{+}}$, and the other $c_{i}$ can be expressed similarly. There are six one-dimensional faces, $f_{1}, f_{2}, \dots, f_{6}$, which are rays with starting point $\mathcal{O}$. The face $f_{1}$ is defined by $f_{1}= \overline{H_{12}^{\pm}} \cap \overline{H_{23}^{+}} \cap \overline{H_{13}^{+}}$. Note that smaller dimensional faces are contained in larger dimensional faces; for example, $\mathcal{O} \subset f_{1} \subset c_{1}$. Under the Tits product, every face in $\A(S_{3})$ is idempotent, and $\mathcal{O}$ is the identity. Other example computations include: 
\[ f_{1}c_{1} = c_{1}, \hspace{1.5em} f_{1}c_{5} = c_{1}, \hspace{1.5em} c_{1}c_{6} = c_{1}, \hspace{1.5em}f_{1}f_{2} = c_{2}. \]

\begin{figure}[!h]
\begin{center}
  \begin {tikzpicture}[scale=.75]

  \node[invisivertex] (A1) at (-1.5,-2.598){};
  \node[invisivertex] (A2) at (1.5,2.598){$H_{12}$};
 \node[invisivertex] (A3) at (-1,-1.9){};
 \node[invisivertex] (A4) at (0,-2.45){};

  \node[invisivertex] (B1) at (1.4,-2.598){$H_{13}$};
  \node[invisivertex] (B2) at (-1.5,2.598){};
   \node[invisivertex] (B3) at (1,-1.598){};
   \node[invisivertex] (B4) at (1.75,-1.15){};

  \node[invisivertex] (D1) at (-3,0){};
  \node[invisivertex] (D2) at (3,0){$H_{23}$};
  \node[invisivertex] (D3) at (2,.75){};
  \node[invisivertex] (D4) at (2,0){};

    \node[invisivertex] (f1) at (0,0){};

  \path[-] (A1) edge [bend left =0] node[above] {} (A2);
  \path[-] (B1) edge [bend left =0] node[above] {} (B2);
  \path[-] (D1) edge [bend left =0] node[above] {} (D2);
    \path[->] (D4) edge [ thick] node[left] {\tiny{$H_{23}^{+}$}} (D3);
\path[->] (A3) edge [ thick] node[above] {\tiny{$H_{12}^{+}$}} (A4);
\path[->] (B3) edge [ thick] node[below] {\tiny{$H_{13}^{+}$}} (B4);

  \end{tikzpicture}
  \hspace{3em}
    \begin {tikzpicture}[scale=.75]
  \node[BVertex] (f1) at (0,0){};

  \node[invisivertex] (A1) at (-1.5,-2.598){};
  \node[invisivertex] (A2) at (1.5,2.598){};
  \node[invisivertex] (B1) at (1.5,-2.598){};
  \node[invisivertex] (B2) at (-1.5,2.598){};
  \node[invisivertex] (D1) at (-3,0){};
  \node[invisivertex] (D2) at (3,0){};

  \node[label] (l1) at (1.73,1){$c_{1}$};
    \node[label] (l7) at (0,2){$c_{2}$};
    \node[label] (l6) at (-1.73,1){$c_{3}$};
    \node[label] (l5) at (-1.73,-1){$c_{4}$};
  \node[label] (l3) at (0,-2){$c_{5}$};
  \node[label] (l2) at (1.73,-1){$c_{6}$};
    \node[glabel] (a1) at (-1.75,-3.03){\small{$f_{4}$}};
  \node[glabel] (12) at (1.75,3.03){\small{$f_{1}$}};
  \node[glabel] (b1) at (1.75,-3.03){\small{$f_{5}$}};
  \node[glabel] (b2) at (-1.75,3.03){\small{$f_{2}$}};
  \node[glabel] (d1) at (-3.5,0){\small{$f_{3}$}};
  \node[glabel] (d2) at (3.5,0){\small{$f_{6}$}};
  \node[graylabel] (f2) at (.5,.3) {\tiny{$\mathcal{O}$}};

  \path[-] (A1) edge [bend left =0] node[above] {} (A2);
  \path[-] (B1) edge [bend left =0] node[above] {} (B2);
  \path[-] (D1) edge [bend left =0] node[above] {} (D2);
  \end{tikzpicture}

\end{center}
\caption{On the left, the Type $A$ hyperplane arrangement $\A(S_{3})$ with the positive half-spaces shown. 
On the right, the faces of $\A(S_{3})$.}\label{fig:braid}
\end{figure}
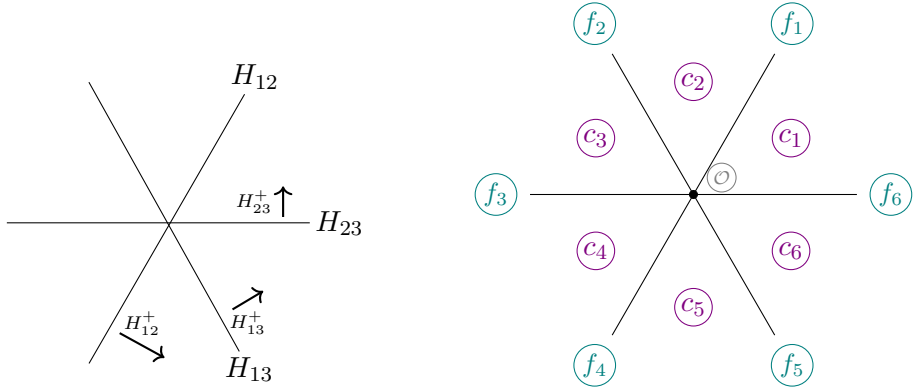
\end{example}

The Tits product on $\F$ was introduced by Tits in \cite{tits1974buildings} and has proved to be a powerful tool in the study of hyperplane arrangements. It makes $\F$ an example of a \emph{left  regular band}, meaning that $FGF = FG$ for every $F,G \in \F$.

The \emph{Tits algebra} $\R \F$ is the semigroup $\R$-algebra over $\F$. Write a typical element in the Tits algebra as $u = \sum_{F \in \F} u_{F} F$, where $u_{F} \in \R$. Similarly, the lattice $\lat$---which has the structure of a semigroup with multiplication given by the join operation $x \vee y$ ---can be turned into a semigroup algebra $\R \lat$. In this way, $\mathfrak{s}: \R \F \to \R \lat$ becomes an algebra homomorphism\footnote{In fact, $\mathfrak{s}$ is the abelianization morphism for $\R \F$.}. 

For any $F \in \F$ and $C \in \ch$, one has $FC \in \ch$. Hence $\R \ch$ is a
left-ideal of $\R \F$ and thus a left $\R \F$-module, where $u \in \R \F$ acts on $C \in \ch$ by
\[u \cdot C = \sum_{F \in \F} u_{F} FC. \]

In their celebrated result, Bidigare, Hanlon and Rockmore provide a way to analyze the action of $u \in \R \F$ on $\R \ch$.

\begin{theorem}
{\rm (Bidigare-Hanlon-Rockmore, \cite[Thm. 1.2]{BHR}).}
\label{thm:bhr}
Let $u = \sum_{F \in \F} u_{F} F \in \F$ act on $\R \ch$. Then for every $X \in \lat$, $u$ has an eigenvalue 
\[\lambda_{X} = \sum_{F \subset X} u_{F} \]
that has multiplicity $|\mu(V,X)|$.
Moreover, if $u_{F} \in \R_{\geq 0}$, then $u$ acts semisimply on $\R \ch$.
\end{theorem}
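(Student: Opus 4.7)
The plan is to exploit the algebra homomorphism $\mathfrak{s}: \R\F \to \R\lat$ (where $\R\lat$ is given the join product), reducing the spectral analysis of $u$ on $\R\ch$ to that of the commutative algebra $\R\lat$, whose structure is transparent via Möbius inversion.

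First I would determine the spectrum of $\R\lat$. Because $Y \vee Y' \leq X$ if and only if $Y \leq X$ and $Y' \leq X$, each map $\chi_X: \R\lat \to \R$ sending $Y \mapsto [Y \leq X]$ is an $\R$-algebra character. The $|\lat|$ characters $\{\chi_X\}_{X \in \lat}$ are distinct, so $\R\lat$ is semisimple with primitive idempotents $Q_X := \sum_{Y \leq X} \mu(Y, X)\, Y$ obtained by Möbius inversion. Evaluating on $\mathfrak{s}(u)$ gives $\chi_X(\mathfrak{s}(u)) = \sum_{F \subset X} u_F = \lambda_X$, so $\mathfrak{s}(u)$ has eigenvalue $\lambda_X$ with eigenvector $Q_X$.

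Next I would transfer this spectrum to $\R\ch$. Since $\F$ is a left regular band, the kernel of $\mathfrak{s}$ coincides with the Jacobson radical of $\R\F$ and is nilpotent (any product of radical elements whose length exceeds the rank of $\lat$ vanishes, because supports can only grow a bounded number of times). Hence if $p(t) := \prod_{X \in \lat}(t - \lambda_X)$, then $p(\mathfrak{s}(u)) = 0$ forces $p(u) \in \mathrm{rad}(\R\F)$, and some power $p(u)^N$ vanishes in $\R\F$; in particular every eigenvalue of $u$ on $\R\ch$ equals some $\lambda_X$. To pin down the multiplicity $|\mu(V,X)|$, I would build, for each $X \in \lat$, a $u$-stable subspace $V_X \subset \R\ch$ of dimension $|\mu(V, X)|$ on which $u$ acts by $\lambda_X$ plus possibly a nilpotent correction. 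A natural construction uses alternating sums of chambers along flags in $\lat_{\leq X}$; the correct dimension count falls out by induction on the rank of $\A$, combined with Zaslavsky's identity $|\ch(\A_X)| = \sum_{Y \leq X} |\mu(V,Y)|$. Summing dimensions gives $\sum_X |\mu(V, X)| = |\ch|$, confirming $\R\ch = \bigoplus_X V_X$.

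The main obstacle is the semisimplicity claim when $u_F \geq 0$: a priori, $u$ could act on $V_X$ as $\lambda_X$ only modulo a nilpotent piece coming from $\mathrm{rad}(\R\F)$. To rule this out, I would invoke the Markov chain interpretation: when normalized to a probability distribution, $u$ generates a random walk on $\ch$ whose transition matrix is known to be diagonalizable over $\R$. Equivalently, positivity forces the generalized $\lambda_X$-eigenspaces to coincide with the genuine ones, since any nontrivial Jordan block would produce matrix entries in $u^n$ growing incompatibly with the stochastic bound $\sum_{C'} |P^n(C, C')| \leq 1$.
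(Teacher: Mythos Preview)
The paper does not supply its own proof of this theorem: it is quoted as background with an attribution to \cite[Thm.~1.2]{BHR} and is immediately followed by a remark, not a proof. So there is no in-paper argument to compare your proposal against.

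For what it is worth, your outline follows the standard modern approach (via the support map to the semisimple commutative algebra $\R\lat$, nilpotence of $\ker(\mathfrak{s})$, and Zaslavsky's count), and the eigenvalue/multiplicity portion is essentially correct in spirit. The one place you should be more careful is the final semisimplicity argument: the heuristic that a Jordan block would violate the stochastic bound $\sum_{C'} |P^n(C,C')| \leq 1$ does not work as stated, since a substochastic matrix can have nontrivial Jordan blocks for eigenvalues of modulus strictly less than $1$ without any entries of its powers blowing up. The actual proofs (in \cite{BHR} and in Brown's subsequent treatment) establish diagonalizability either by exhibiting an explicit upper-triangular form with respect to a suitable basis and then arguing more delicately, or by producing genuine eigenvectors spanning $\R\ch$; you would need to fill in one of those arguments rather than rely on the growth heuristic.
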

\begin{remark}
Theorem \ref{thm:bhr} has been applied extensively in the study of random walks on the chambers of hyperplane arrangements. To construct such a random walk, assign coefficients $u_{F}$ to each $F \in \F$ such that $u_{F} \geq 0$ and $\sum_{F \in \F} u_{F} =1$. See \cite{BHR}, \cite{RSW} for details.
\end{remark}

\subsection{Reflection arrangements}\label{sec:reflarr}
The arrangements we are interested in are those coming from reflection groups, called \emph{reflection arrangements.} Given a reflection group $W$, identify $W$ with its canonical faithful representation in $\GL(V)$ for $V = \R^{r}$. The reflection arrangement $\A(W)$ (written $\A$ when the context is clear) is the hyperplane arrangement $\{ H_{i} \}_{i \in I}$ where for each $H_{i}$, there is an element $t_{i} \in W$ that orthogonally reflects over $H$.

There are many deep connections between the properties of $W$ and $\A(W)$. For example, the \emph{characteristic polynomial} of $\A$, defined for a general arrangement by
\[ \chi(\A):= \sum_{X \in \lat(\A)} \mu(V,X) t^{\dim(X)}, \]
can be factored as $\chi(\A) = (t-e_{1})(t-e_{2}) \dots (t-e_{r})$ when $\A$ is a reflection arrangement (where the $e_{j} := d_{j} -1$ are the exponents of $W$).  

By construction, the reflections in $W$ preserve $\A$, inducing an action on $\lat$ and $\F$. This action extends to $\R \F$ and commutes with the support map $\mathfrak{s}$, so $w\cdot \mathfrak{s}(F) = \mathfrak{s}(wF)$. 

The action of $W$ on $\ch$ is simply transitive; because of this, once one makes a choice of a \emph{fundamental chamber} $c_{1} \in \ch$ (which forms a fundamental domain for the $W$-action on $V$), each remaining chamber can be uniquely identified with a $w \in W$ by $c_{w}:= w(c_{1})$. From this it follows that $\R \ch$ is isomorphic as a (left-)$\R W$-module to the group algebra $\R W$ itself.

The localization of a reflection arrangement $\A_{X}$ is always a reflection arrangement with corresponding reflection group $W_{X}$, the point-wise stabilizer of $X$. By contrast, the restriction of a reflection arrangement $\A^{X}$ is not necessarily a reflection arrangement. In fact, using a result of Abramenko \cite[Prop. 5]{abramenko1994walls}, Aguiar and Mahajan show that an essential reflection arrangement $\A$ has the property that $\A^{X}$ is a reflection arrangement for every flat $X \in \lat(\A)$ if and only if $W$ is a direct product of coincidental reflection groups. This fact will prove instrumental in studying the Eulerian representations of coincidental reflection groups.

\subsubsection{Solomon's descent algebra}\label{sec:desalg}
While the results discussed in the Introduction take place in the group algebra, thus far, we have only discussed $\R \F$. Solomon's descent algebra and a theorem of Bidigare provide the tools to translate
between $\R \F$ and $\R W$.

In \cite{solomon} Solomon observed that the descent set of an element of a Coxeter group, as defined in \eqref{descent-set-definition}, could be used to define a subalgebra of $\R W$. In particular, he showed that there is a subalgebra of $\R W$, now known as \emph{Solomon's descent algebra}, defined by
\[ \D(W) = \D:= \Big\{  \sum_{w \in W} c_{w} w : c_{w} = c_{w'} \textrm{ if }  \Des(w) = \Des(w')  \Big\}.\]
What is surprising about Solomon's result is that elements of $\D$ are closed under multiplication. Let 
\[ Y_{T} := \sum_{w: \Des(w) \subseteq T} w, \]
\[ Z_{T} := \sum_{w: \Des(w) = T} w, \]
where $T$ varies over subsets of $S$; the collection of $\{ y_{T} \}_{T \subset S}$ and $\{ z_{T} \}_{T \subset S}$ form bases of $\D$. Elements in $\D$ act by left multiplication on $\R W$. The opposite descent algebra $\dopp$, can be defined as the same set as $\D$ acting by right multiplication on $\R W$. Note that $Y_{T}$ and $Z_{T}$ are also bases for $\dopp$. 

Every reflection arrangement is linked to $\dopp$ as follows. Let $\R \F^{W}$ be the algebra of $W$-invariants of $\R \F$. A basis for $\R \F^{W}$ is indexed by $W$-orbits of elements of $\F$. Define 
\[ \gamma_{[G]} := \sum_{F: [F] \subseteq [G] } F,  \]
\[ \zeta_{[G]} := \sum_{F \in [G]} F. \]
Both $\{ \gamma_{[G]} \}$ and $\{ \zeta_{[G]} \}$ as $[G]$ runs over orbits in $\F$ form bases for $\R \F^{W}$. 

Using the fact that $c_{1} \in \ch$ forms a fundamental domain for the $W$-action on $V$, each face orbit $[G]$ can be identified uniquely with a face $F \subset c_{1}$ where $F \in [G]$. Hence, each $[G]$ can be uniquely identified with every subset $T$ of $S$ by
\[ [G] \iff T:= \{ s \in S: s(F) \neq F  \}\subseteq S. \]
In this case we say that $G$ is of \emph{type $T$}.

From this identification, Bidigare proves a beautiful connection between $\dopp$ and $\R \F^{W}$.
\begin{theorem}{\rm(Bidigare, \cite[Thm. 3.81]{bidigare}).}\label{thm:bidigare} 
There is an algebra isomorphism 
$$
\varphi: \R \F^{W} \to \dopp
$$
given by sending 
\[ \varphi: \gamma_{[G]} \mapsto Z_{T} \]
\[ \varphi: \zeta_{[G]} \mapsto Y_{T},\]
where $[G]$ is of type $T$.
\end{theorem}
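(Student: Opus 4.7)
The plan is to construct $\varphi$ canonically as the action of the invariant subalgebra $\R\F^W$ on the left regular $\R W$-module $\R\ch$, and then match up the bases $\{\zeta_{[G]}\}$ and $\{Y_T\}$ via an explicit Tits-product computation at the fundamental chamber.

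Fix the fundamental chamber $c_1$ and use it to identify $\R\ch \cong \R W$ as left $\R W$-modules via $c_w \leftrightarrow w$. The Tits product makes $\R\ch$ into a left $\R\F$-module, and since elements of $\R\F^W$ are $W$-invariant by definition, the restricted action of $\R\F^W$ commutes with the left $W$-action. Because endomorphisms of the left regular $\R W$-module are exactly right multiplications by elements of $\R W$, there is an algebra isomorphism $\mathrm{End}_{\R W}(\R W) \cong (\R W)^{\op}$ via $a \mapsto (x \mapsto xa)$. Combining, the Tits action of $\R\F^W$ yields an algebra homomorphism
\[
\varphi : \R\F^W \longrightarrow (\R W)^{\op}, \qquad u \longmapsto u \cdot c_1,
\]
so that $\varphi(u)$ is just the image of the identity chamber. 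It remains to show $\varphi$ lands in $\dopp$ and matches the two natural bases.

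For the key geometric computation, let $[G]$ have type $T$ with canonical representative $F \subseteq \overline{c_1}$. The stabilizer of $F$ is the parabolic subgroup $W_{S \setminus T}$, so $[G]$ is indexed by $W/W_{S \setminus T}$, and the minimal-length coset representatives are exactly those $w \in W$ with $\Des(w) \subseteq T$. Thus
\[
\varphi(\zeta_{[G]}) \;=\; \sum_{F' \in [G]} F' \cdot c_1 \;=\; \sum_{\substack{w \in W \\ \Des(w) \subseteq T}} (wF) \cdot c_1.
\]
The heart of the argument is the identity $(wF) \cdot c_1 = c_w$ for such $w$: the chambers whose closure contains $wF$ are precisely $\{c_v : v \in w W_{S \setminus T}\}$, and the Tits product $(wF) \cdot c_1$ selects among these the chamber of smallest gallery distance from $c_1$, i.e., the one minimizing $\ell(v)$; since $w$ is the minimal-length coset representative, this minimum is attained at $v = w$. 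Consequently $\varphi(\zeta_{[G]}) = \sum_{\Des(w) \subseteq T} w = Y_T \in \dopp$.

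Finally, $\{\zeta_{[G]}\}$ is a basis of $\R\F^W$ indexed by face orbits, equivalently by subsets $T \subseteq S$ via the type, and $\{Y_T\}_{T \subseteq S}$ is a basis of $\dopp$ of the same cardinality, so $\varphi$ is a vector-space isomorphism between $\R\F^W$ and $\dopp$, and an algebra map by construction. The companion identity $\varphi(\gamma_{[G]}) = Z_T$ will follow by the analogous Tits-product bookkeeping on the $\gamma$-basis, or equivalently by M\"obius inversion on the Boolean lattice of subsets of $S$ using $Y_T = \sum_{T' \subseteq T} Z_{T'}$. I expect the main obstacle to be the geometric lemma $(wF) \cdot c_1 = c_w$, which in turn rests on the characterization of the Tits product $F \cdot C$ as the unique chamber containing $F$ in its closure that is closest to $C$ in the gallery metric.
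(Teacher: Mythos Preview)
The paper does not give its own proof of this theorem: it is stated with attribution to Bidigare's thesis and then used as a tool, so there is no in-paper argument to compare against.

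Your proposal is the standard proof of Bidigare's theorem, and it is correct. The map $\varphi(u)=u\cdot c_1$ is well defined because the left Tits action of $\R\F^W$ on $\R\ch$ commutes with the left $W$-action (using $w(FC)=(wF)(wC)$ and $W$-invariance of the coefficients), so it lands in $\mathrm{End}_{\R W}(\R W)\cong(\R W)^{\op}$ as you say. Your computation of $\varphi(\zeta_{[G]})=Y_T$ via the gate property of the Tits product is exactly the right mechanism: the residue of $wF$ is $\{c_{wv}:v\in W_{S\setminus T}\}$, the gallery distance to $c_1$ is $\ell(wv)=\ell(w)+\ell(v)$ when $w$ is the minimal coset representative, so the gate is $c_w$. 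The dimension count ($2^{|S|}$ on each side) then forces the isomorphism onto $\dopp$.

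One small caution: your closing sentence invokes M\"obius inversion to get $\varphi(\gamma_{[G]})=Z_T$ from $\varphi(\zeta_{[G]})=Y_T$. This is valid, but it presupposes that the change of basis from $\{\gamma_{[G]}\}$ to $\{\zeta_{[G]}\}$ on the face side matches the change from $\{Z_T\}$ to $\{Y_T\}$ on the descent side, i.e., that $\zeta_{[G]}=\sum_{T'\subseteq T}\gamma_{[G']}$. The paper's notation ``$[F]\subseteq[G]$'' in the definition of $\gamma_{[G]}$ is not entirely transparent, so if you write this up you should state explicitly which partial order on face orbits you are using and verify that it induces the Boolean relation on types. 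Once that is pinned down, the inversion step is immediate.
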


\begin{example}
Consider once again the arrangement $\A(S_{3})$ discussed in Example \ref{ex:braid1}. In this case, $W = S_{3}$ and $S = \{ (12), (23) \}$. The face orbits in $\F$ and their face types are:
\begin{align*}
 [\mathcal{O}] = \{ \mathcal{O} \} &\iff \emptyset \\
 [f_{1}] = \{ f_{1}, f_{3}, f_{5} \} &\iff \{ (23) \} \\
 [f_{6}] = \{ f_{2}, f_{4}, f_{6} \} &\iff \{ (12) \} \\
 [c_{1}] = \{ c_{1}, c_{2}, c_{3}, c_{4}, c_{5}, c_{6}\} &\iff \{ (12), (23) \}.
\end{align*}

\end{example}

\subsection{Eulerian idempotents for hyperplane arrangements}\label{sec:eulhyp} In this section we will briefly summarize the most relevant parts of the theory of Eulerian idempotents for hyperplane arrangements initiated by Saliola in \cite{sal2008quiv,saliola2009face,saliola2012eigenvectors} and later studied by Aguiar-Mahajan in \cite{aguiarmahajan}. For generalizations of these idempotents to settings other than hyperplane arrangements, see \cite{berg2011primitive,hivert20111,novelli2010representation,saliolasemigroup}.

In \cite{aguiarmahajan}, Aguiar and Mahajan give a number of equivalent ways to define the idempotents first introduced by Saliola. We will focus on Saliola's original framework of homogeneous sections of the support map (called the \emph{Saliola method} in \cite{aguiarmahajan}).

Let $\mathfrak{u}: \R \lat \to \R \F$ be any section of the support map, meaning that the composition 
\[ \R \lat \overset{\uu}{\longrightarrow} \R \F  \overset{\mathfrak{s}}{\longrightarrow} \R \lat  \]
is the identity on $\R \lat$. For any $X \in \lat(\A)$, define $\uu_{X}:=\uu(X)$. By construction, $\mathfrak{s}(\uu_{X}) = X$ and $\uu_{X}$ can be written as 
\[\uu_{X} = \sum_{F \in \F} u_{F} F,\]
where $\sum_{F \in \F} u_{F} = 1$. 

The section $\uu$ is \emph{homogeneous} if each $\uu_{X}$ is a sum of faces with support exactly $X$:
\[ \uu_{X} = \sum_{\substack{F \in \F \\ \mathfrak{s}(F) = X}} u_{F} F. \]
A homogeneous section $\uu$ is \emph{the uniform section} if $u_{F} = u_{G}$ whenever $\mathfrak{s}(G) = \mathfrak{s}(F)$.

In \cite{saliola2009face}, Saliola shows that using any homogeneous section $\uu$, one can recursively define a family of idempotents $\{ \mathfrak{e}_{X} \}_{X \in \lat}$ in $\R \F$ by
\[ \mathfrak{e}_{X}:= \uu_{X} - \sum_{Y: Y < X} \uu_{X}\cdot \mathfrak{e}_{Y}. \]
He shows that the family $\{ \mathfrak{e}_{X} \}_{X \in \lat}$ (henceforth \emph{flat idempotents}\footnote{In \cite{aguiarmahajan}, the $\mathfrak{e}_{X}$ are called Eulerian idempotents.}) form a complete, primitive and orthogonal system of idempotents in $\R \F$.
Importantly, each family depends on the section being used, and in fact, homogeneous sections are in correspondence with families of flat idempotents $\{ \frake_{X}\}_{X \in \lat}$.

A homogeneous section $\uu$ is an \emph{eigensection} of an element $u \in \R \F$ if there are scalars $\lambda = \{ \lambda_{X} \}_{X \in \lat}$ such that for every $X \in \lat$
\begin{equation}
\label{eigensection-definition}
u^{X} \cdot \uu_{X} = \lambda_{X} \cdot \uu_{X}, 
\end{equation}
where $u^{X} := \sum_{F \subset X} u_{F} F$. Saliola studies eigensections\footnote{In fact he does this in the more general context of left-regular-bands.} in \cite{saliola2012eigenvectors}, where he shows that $\uu$ is an eigensection of $u$ with eigenvalues $\lambda = \{ \lambda_{X} \}_{X \in \lat}$ as in
\eqref{eigensection-definition}
if and only if $u = \sum_{X \in \lat} \lambda_{X} \mathfrak{e}_{X}$.

When $\A$ is a reflection arrangement, Saliola shows in \cite{sal2008quiv} that one can define a complete, orthogonal system of idempotents inside of $\R \F^{W}$ using an \emph{invariant homogeneous section}, which is a homogeneous section such that $u_{F} = u_{G}$ if $[F] = [G]$. For example, the uniform section is an invariant section. A family of \emph{flat-orbit idempotents} $\{ \frake_{[X]} \}_{[X] \in \lat(\A)/W}$ in $\R \F^{W}$ are obtained from an invariant homogeneous section in an analogous way to the non-invariant case. Equivalently, given an invariant section one can group the $\mathfrak{e}_{X}$ by flat-orbit: 
 \[ \mathfrak{e}_{[X]}:= \sum_{Y \in [X]} \mathfrak{e}_{Y}. \]
The flat-orbit idempotents can be realized in $\R W$ using the isomorphism $\varphi: \R \F^{W} \to \dopp$, and the resulting idempotents $\varphi(\frake_{[X]})$ recover primitive idempotents in $\dopp$ defined earlier by F. Bergeron, N. Bergeron, Howlett and Taylor in \cite{bergeron1992decomposition}.

One can define coarser idempotents 
\[\mathfrak{e}_{k}:=  \sum_{\substack{X \in \lat(\A) \\ \dim(X) = k}} \mathfrak{e}_{X} = \sum_{\substack{[X] \in \lat(\A)/W \\ \dim(X) = k}} \mathfrak{e}_{[X]}.\]
In the case of the uniform section, the family $\mathfrak{e}_{k}$ has particularly nice properties; we will call these idempotents the \emph{Eulerian idempotents}, and henceforth, the notation $\mathfrak{e}_{k}$ will refer only to them. When $W$ is $S_{n}$ or $B_{n}$, applying the map $\varphi$ to the $\frak{e}_{k}$ recovers the idempotents defined by equations \eqref{eq:typeaeul} and \eqref{eq:typebeul}, respectively.


 When $W$ is a coincidental reflection group, Aguiar and Mahajan show the $\mathfrak{e}_{k}$ have an elegant expression.
\begin{theorem}{\rm (Aguiar-Mahajan, \cite[Thm. 12.71]{aguiarmahajan}).} \label{thm:AMcoin} For a coincidental reflection group $W$,
\[ \sum_{k=0}^{r} t^{k} \mathfrak{e}_{k} = \sum_{X \in \lat(\A)} \frac{\chi(\A^{X})}{c^{X}} 
\Bigg(
\sum_{F: \mathfrak{s}(F) = X} F 
\Bigg)
=\sum_{[F] \in \F^{W}} \frac{\chi(\A^{\mathfrak{s}(F)})}{c^{\mathfrak{s}(F)}} \zeta_{[F]} \] 
 where $c^{X}$ is the number of chambers in $\A^{X}$.
\end{theorem}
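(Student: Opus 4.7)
The plan is to apply Saliola's eigensection characterization: it suffices to show that the uniform section $\uu$ is an eigensection of the right-hand side of the theorem with eigenvalues $\lambda_Y = t^{\dim Y}$ for each $Y \in \lat(\A)$.

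I first observe that under the uniform section $\uu_X = \tfrac{1}{c^X}\sum_{F:\,\mathfrak{s}(F)=X} F$, the right-hand side becomes
\[
w \;:=\; \sum_{X \in \lat(\A)} \chi(\A^X,t)\,\uu_X,
\]
the equivalence of the two displayed forms being a consequence of the $W$-invariance of $\chi(\A^X)/c^X$. The eigensection condition to verify is then
\[
w^Y \cdot \uu_Y \;=\; t^{\dim Y}\,\uu_Y, \qquad \text{where } w^Y \;=\; \sum_{X \subseteq Y} \chi(\A^X,t)\,\uu_X.
\]

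I would verify this identity by comparing coefficients in $\R\F$ of each face $G$ of support $Y$. An analysis of the Tits product shows that the coefficient of $G$ in $\uu_X\,\uu_Y$ equals $N(G,X,Y)/(c^X c^Y)$, where $N(G,X,Y)$ counts factorizations $F_1 F_2 = G$ with $\mathfrak{s}(F_1) = X$ and $\mathfrak{s}(F_2) = Y$. A sign-pattern argument shows such factorizations exist only when $X$ is a flat in the closure $\bar G$, in which case $F_1$ is uniquely determined and $F_2$ ranges over the chambers of $\A^Y$ lying in the fixed chamber of $(\A^Y)_X$ containing $G$. At this point the coincidental hypothesis enters via Abramenko's theorem (cited in Section \ref{sec:reflarr}): $\A^Y$ is itself a reflection arrangement with group $W^Y$, so $c^Y = |W^Y|$ and $N(G,X,Y) = |W^Y|/|\mathrm{stab}_{W^Y}(X)|$.

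The key final step is to recognize, via an orbit-counting argument, that the coefficient $|W^Y|/(c^X \cdot |\mathrm{stab}_{W^Y}(X)|)$ equals the size of the $W^Y$-orbit of $X$ in $\lat(\A^Y)$. This uses the identification $|N_{W^Y}(X)| = c^X \cdot |\mathrm{stab}_{W^Y}(X)|$—equivalently, that the quotient $N_{W^Y}(X)/\mathrm{stab}_{W^Y}(X)$ acts on $X$ as the reflection group $W^X$ of $\A^X$—which genuinely requires the coincidental hypothesis, since it is precisely the failure of $\A^X$ to be a reflection arrangement that breaks this identification in other types. The weighted sum over $X \in \bar G$ then collapses to the unweighted orbit sum $\sum_{X' \subseteq Y}\chi(\A^{X'})$, and the defining M\"obius identity $\chi(\A^X,t) = \sum_{Z \subseteq X}\mu(X,Z)\,t^{\dim Z}$ together with standard inversion yields $t^{\dim Y}$. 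The main obstacle is precisely the identification $N_{W^Y}(X)/\mathrm{stab}_{W^Y}(X) \cong W^X$ underpinning this step; verifying it in detail for the four families of coincidental groups (Types $A$ and $B$, $H_3$, and $I_2(m)$) requires controlling the relationship between pointwise and setwise stabilizers of flats—and crucially, the fact (due to Abramenko) that $\A^X$ remains a reflection arrangement exactly in the coincidental setting. Once that identification is in hand, the rest of the argument is formal.
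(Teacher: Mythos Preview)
The paper does not prove Theorem~\ref{thm:AMcoin}; it is quoted from Aguiar--Mahajan and used as input to the proof of Theorem~\ref{thm:coincidental}. So there is no paper-side argument to compare against, and I comment only on your proposal.

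Your eigensection strategy is the right one---it is exactly the mechanism the paper itself invokes when proving the closely related Proposition~\ref{prop:eulmatch}---but the ``key final step'' is incorrect. The identification $N_{W^Y}(X)/\mathrm{stab}_{W^Y}(X)\cong W^X$, equivalently $|N_{W^Y}(X)| = c^X\cdot|\mathrm{stab}_{W^Y}(X)|$, already fails for $W^Y=S_3$ with $X$ a hyperplane: there the pointwise and setwise stabilizers of $X=H_{12}$ coincide (both equal $\langle(12)\rangle$), so the quotient is trivial, while $c^X=2$. For $W^Y=S_4$ and $X=H_{12}$ one has $c^X=6$ but the quotient has order $2$. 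Consequently your coefficient $|W^Y|/(c^X\,|\mathrm{stab}_{W^Y}(X)|)$ is \emph{not} the $W^Y$-orbit size of $X$ (in the $S_3$ example it equals $3/2$), and, separately, the supports of faces of $\bar G$ do \emph{not} form a transversal for the $W^Y$-orbits of flats (the fundamental chamber of $S_3$ meets two of the three hyperplanes, both in the same orbit). So the passage from the weighted sum over $X\in\bar G$ to the unweighted $\sum_{X'\subseteq Y}\chi(\A^{X'})$ does not go through as written.

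The fix is to replace the explicit count by an invariance argument, as in the paper's proof of Proposition~\ref{prop:eulmatch}. Since $W$ is coincidental, $\A^Y$ is a reflection arrangement; its reflection group permutes the flats $X\subseteq Y$, carries $\uu_X$ to $\uu_{wX}$, and preserves $\chi(\A^X)$, so $w^Y=\sum_{X\subseteq Y}\chi(\A^X)\,\uu_X$ is invariant under this group. By \cite[Lemma~12.70]{aguiarmahajan} this forces $w^Y\uu_Y=\lambda_Y\,\uu_Y$ for some scalar $\lambda_Y$, and applying the support map $\mathfrak{s}$ gives $\lambda_Y=\sum_{X\subseteq Y}\chi(\A^X)=t^{\dim Y}$, the last equality being precisely the M\"obius identity you already noted.
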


\section{Topology of subspace arrangements}\label{sec:topology}
Our goal is eventually to give a description of the Eulerian representations in terms of the (co)homology of topological spaces closely related to hyperplane arrangements. We discuss those spaces and their (co)homology below.
\subsection{The (equivariant) Goresky-MacPherson formula}\label{sec:WHandGM} 
We will first consider the more general setting of subspace arrangements. 
A \emph{real subspace arrangement} is a collection of linear subspaces $\mathcal{U} = \{ U_{i} \}_{i \in I}$ of an $\R$-vector space $V$. Note that a hyperplane arrangement is a subspace arrangement where every subspace has codimension one. 

As in the case of hyperplane arrangements, let $\lat(\mathcal{U})$ be the poset of intersection subspaces, ordered by reverse containment. In general, $\lat(\U)$ is a not necessarily a geometric lattice, but in the cases relevant to us it will be. Let $(V, X)$ be the open interval between $V$ and $X$ in $\lat(\U)$. The \emph{order complex} of $(V,X)$, written $\Delta(V,X)$, is the simplicial complex with $k$-dimensional faces corresponding to $k$-chains in $(V,X)$; this simplicial complex has homology $H_{*}(V,X)$. 
When $\lat(\U)$ is a geometric lattice, $\tilde{H}_{i}(V,X) = 0$ unless $i =\codim(X) - 2$. Recall from the Introduction that we defined
$$
\WH_{X}:= \tilde{H}_{\codim(X)-2}(V,X),$$
which is a summand of \emph{Whitney homology}; see Bjorner \cite{bjorner1992homology} or Orlik-Terao \cite{orlikterao} for precise definitions of Whitney homology in general.

Let $\com_{\mathcal{U}}:= V - \mathcal{U}$. If a group $W$ acts on $V$ in a way that preserves $\mathcal{U}$, call $\mathcal{U}$ a $W$-subspace arrangement. In this case, $W$ also acts on $\com_{\mathcal{U}}$, making $H^{*}(\com_{\mathcal{U}})$ a $W$-module. Moreover, for every $X \in \lat(\U)$, the set-wise stabilizer $N_{X}$ acts on the order complex $\Delta(V,X)$ and on its homology $H_*(V,X)$.

The $W$-module structure of $H^{*}(\com_{\U})$ is determined in part by a one-dimensional representation of $N_{X}$ for each $X \in \U$ as follows. Define the space  $X^{\dagger} := \mathbb{S}^{\codim(X)-1} \cap X^{\perp}$ in $V$, where $\mathbb{S}^{\codim(X)-1}$ is the unit sphere within the perp space $X^{\perp}$. Thus $X^{\dagger}$ will have non-trivial reduced homology in degree $\codim(X)-1$ only, in which case the homology will be one-dimensional. The group $N_{X}$ acts on $X^{\dagger}$ and hence on $\tilde{H}_{\codim(X)-1}(X^{\dagger})$; the latter action is determined by whether $N_{X}$ reverses or preserves the orientation of the fundamental class of $X^{\dagger}$.

For example, if $\A$ is a rank $r$ reflection arrangement and $X = 0$, then $X^{\perp}= V$, $N_{0} = W$, and $0^{\dagger} = \mathbb{S}^{r-1}$. Hence $W$ acts as the sign representation on $H_{r-1}( \mathbb{S}^{r-1})$ because reflections are orientation reversing; note that $\sgn(w)= \Det_{V}(w)$, the determinant of $w$ acting on $V$. In fact, for any $X \in \lat(\A)$, by an analogous argument, $H_{\codim(X)-1}(X^{\dagger}) \cong_{N_{X}} \Det_{V/X}$, where $\Det_{V/X}$ is the linear character given by the determinant of $w \in N_{X}$ acting on $V/X$. That $N_{X}$ acts on $V/X$ is clear from the fact that $N_{X}$ stabilizes $X$ set-wise; one can then identify the action on $V/X$ with $X^{\perp}$ because $W$ and all of its subgroups preserve inner-products.

From this, we can describe $H^{*}(\com_{\U})$ as both a vector space and a $W$-module.

\begin{theorem}\label{thm:GM}
 Let $\mathcal{U}$ be a real subspace arrangement in $V$. Then
 \begin{enumerate}
    \item { \rm (Goresky-MacPherson: \cite{goresky1988stratified}, \cite{hu1994homology}, \cite{vasilev1994complements}, \cite{ziegler1991homotopy}). } As a vector-space,
    \[ H^{i}(\com_{\mathcal{U}}) \cong \bigoplus_{X \in \lat(\mathcal{U})} \tilde{H}_{\codim(X) - i -2}(V,X). \]
    \item { \rm (Sundaram-Welker: \cite{sundaramwelker}). }If $\mathcal{U}$ is a $W$-subspace arrangement, then as $W$-modules, 
    \[ H^{i}(\com_{\mathcal{U}}) \cong_{W} \bigoplus_{[X] \in \lat(\mathcal{U})/W} \ind_{N_{X}}^{W} \Big( \tilde{H}_{\codim(X) - i -2}(V,X) \otimes \tilde{H}_{\codim(X)-1}(X^{\dagger}) \Big). \]
\end{enumerate}
\end{theorem}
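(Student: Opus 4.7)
The plan is to prove both parts with a single equivariant argument; part (1) then follows by forgetting the $W$-action. The skeleton is the standard strategy: relate the complement $\com_\U$ to the union $\bigcup \U$ by Alexander duality, and then decompose the union via a poset model over the intersection lattice.

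First, I would compactify the picture to a sphere (for instance, deformation-retract $V \setminus \{0\}$ onto the unit sphere $S^{n-1}$, where $n = \dim V$, then work with $K := S^{n-1} \cap \bigcup \U$ as a compact subspace of $S^{n-1}$). Alexander duality in $S^{n-1}$ yields
\[
\tilde{H}^i(\com_\U) \;\cong\; \tilde{H}_{n-i-2}(S^{n-1} \setminus K) \;\cong\; \tilde{H}_{n-i-2}(K)^{\text{something}},
\]
which, handled carefully, relates $H^*(\com_\U)$ to the reduced homology of $\bigcup \U$ up to a degree shift and an orientation twist. The twist is the only place where a character appears, and it will be absorbed later by the local factor $\tilde{H}_{\codim(X)-1}(X^\dagger)$ at each flat $X$.

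Second, I would decompose the homology of $\bigcup \U$ via a poset/combinatorial model. The cleanest implementation is the Ziegler--\v Zivaljevi\'c wedge decomposition (equivalent to running the Mayer--Vietoris spectral sequence for the cover of $\bigcup \U$ by the individual subspaces): up to homotopy, the union is a wedge indexed by flats $X \in \lat(\U)$, with $X$-summand a suspension of the order complex $\Delta(V,X)$ of the open interval below $X$, suspended the right number of times to correctly record the transverse direction. Passing to homology and reindexing via the degree shift of the Alexander duality step produces exactly the summand $\tilde H_{\codim(X)-i-2}(V,X) = \WH_X[\dots]$ promised in part (1).

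Third, I would rerun every step $W$-equivariantly. The key observations are: (a) $W$ permutes the subspaces in $\U$, hence permutes the wedge summands of the Ziegler--\v Zivaljevi\'c decomposition according to the $W$-orbit structure on $\lat(\U)$; grouping the summands over a single orbit $[X]$ realizes an induced representation $\ind_{N_X}^{W}(\,\cdot\,)$. (b) The transverse factor at each $X$, which in the non-equivariant setting contributes a dimension shift, carries a genuine $N_X$-action: it is precisely the top reduced homology of the linked sphere $X^\dagger \subset X^\perp$, and this is exactly the one-dimensional character that Alexander duality on a sphere records. Combining (a) and (b) yields the formula in part (2).

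The main obstacle is entirely in the equivariant bookkeeping of the third step: verifying that the maps in the Ziegler--\v Zivaljevi\'c (or Mayer--Vietoris) decomposition can be chosen $W$-equivariantly, and pinpointing where the orientation character $\tilde H_{\codim(X)-1}(X^\dagger)$ enters so that it lands in the right tensor factor of the induced representation rather than being absorbed into a global sign twist. The rest of the argument is a direct adaptation of the classical computation of $H^*(\com_\U)$.
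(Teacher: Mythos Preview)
The paper does not give its own proof of this theorem: it is stated as a background result, with part (1) attributed to Goresky--MacPherson (and the alternative proofs of Hu, Vassiliev, Ziegler--\v{Z}ivaljevi\'c) and part (2) to Sundaram--Welker. So there is no in-paper argument to compare your proposal against.

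That said, your sketch follows the standard route taken in those references: Alexander duality to pass from the complement to the union, then a homotopy decomposition of the union over $\lat(\U)$ (the Ziegler--\v{Z}ivaljevi\'c wedge lemma / Mayer--Vietoris spectral sequence), and finally the equivariant refinement of Sundaram--Welker to identify the orbit-indexed summands as induced representations with the orientation twist $\tilde H_{\codim(X)-1}(X^\dagger)$. One caution: your Alexander duality line is written ambiguously (you have both $\com_\U$ and $S^{n-1}\setminus K$ on the complement side, and the ``something'' dual should be on the homology of $K$, not of its complement); in a full write-up you would want to state this cleanly and make explicit that the equivariant Alexander duality introduces exactly the global orientation character of $W$ on $V$, which then localizes to $\Det_{V/X}$ at each flat. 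But the overall strategy is the one in the cited literature, and the paper itself simply quotes the result.
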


\begin{example}\label{ex:oddcom}
Let $\A$ be a hyperplane arrangement in a real vector space $V$, and consider the subspace arrangement $\A^{d}$ in $V \otimes \R^{d}$ defined by tensoring every $H \in \A$ by $\R^{d}$. Then $\lat(\A)$ is isomorphic to $\lat(\A^{d})$, and both are geometric lattices. When $d =2$ one has $V \otimes \R^{2} \cong \C$ as a real vector space and $\A^{d}$ can be thought of as the complexification of $\A$.

For $d > 1$, let $\oddcom_{\A} = V \otimes \R^{d} - \A^{d}$. Observe that given $X \in \lat(\A)$, 
\[ \codim_{V \otimes \R^{d}}(X \otimes \R^{d}) = d\cdot \codim_{V}(X) \] and $\Delta(V,X) \cong \Delta(V \otimes \R^{d}, X \otimes \R^{d})$. Hence Theorem \ref{thm:GM} implies that
\begin{equation}\label{eqcom}
    H^{i}(\oddcom_{\A}) = \bigoplus_{X \in \lat(\A)}\tilde{H}_{d\cdot \codim_{V}(X) - i - 2}(V,X). 
\end{equation} 
Since $H_{k}(V,X) = 0$ unless $k = \codim_{V}(X) - 2$, the right-hand-side of \eqref{eqcom} is 0 unless $d \cdot \codim_{V}(X)- i -2 = \codim_{V}(X) - 2$, forcing $i = (d-1)\cdot \codim_{V}(X)$.
It follows that as a vector-space, every nonzero component of $H^{*}(\oddcom_{\A})$ is of the form
\[ H^{j(d-1)}(\oddcom_{\A}) = \bigoplus_{\substack{X \in \lat(\A)\\ \codim(X) = j}}\tilde{H}_{j - 2}(V,X) = \bigoplus_{\substack{X \in \lat(\A)\\ \codim(X) = j}} \WH_{X}. \]

To equivariantly describe $H^{*}(\oddcom_{\A})$, note that $N_{X}$  stabilizes $X \otimes \R^{d}$
set-wise for any $X \in \lat(\A)$. Since $w \in N_{X}$ acts on $X^{\dagger}$ by $\Det_{V/X}(w)$, it follows that $w$ acts on $(X \otimes \R^{d})^{\dagger}$ by $(\Det_{V/X}(w))^{d}$. If $d$ is even, this is the trivial representation, while if $d$ is odd, this is $\Det_{V/X}(w)$. Hence:

\[H^{j(d-1)}(\oddcom_{\A}) \cong_{W} \begin{cases} 
\bigoplus_{\substack{[X] \in \lat(\A)/W\\ \codim(X) = j}} \ind_{N_{X}}^{W} \WH_{X} & d \textrm{ is even,}\\

\bigoplus_{\substack{[X] \in \lat(\A)/W\\ \codim(X) = j}} \ind_{N_{X}}^{W} \WH_{X} \otimes \Det_{V/X} & d \textrm{ is odd.}
\end{cases} \]
Recall that $\WH_{[X]}:= \ind_{N_{X}}^{W} \WH_{X} \otimes \Det_{V/X}$. Hence when $d$ is odd, the latter case can be written as 
\[H^{j(d-1)}(\oddcom_{\A})= \bigoplus_{\substack{[X] \in \lat(\A)/W\\ \codim(X) = j}} \WH_{[X]} .\]

When $d = 2$, $H^{*}(\oddcom_{\A})$ is isomorphic (as a graded ring) to the Orlik-Solomon algebra (see \cite{orlikterao}). When $d \geq 3$ is odd, $H^{*}(\oddcom_{\A})$ is isomorphic (as a graded ring) to the
associated graded of the Varchenko-Gelfand ring, to be discussed and defined in Section \ref{sec:vargel}.
\end{example}
\subsection{Equivariant BHR Theory}
When $\A$ is a reflection arrangement, the $W$-module structure of $WH_{[X]}$ can also be framed in terms of the eigenspaces of semisimple operators $u \in \F^{W}$ on $\R \ch$. Given $u \in \F^{W}$ that acts semisimply on $\R \ch$, let $(\R \ch)_{\lambda}$ be the eigenspace of $u$ corresponding to the eigenvalue $\lambda$. For a flat $X \in \lat$, denote by $\lambda_{X}$ the eigenvalue corresponding to $X$ given by Theorem \ref{thm:bhr}.

Reiner, Saliola and Welker give an equivariant formulation of Theorem \ref{thm:bhr}: 
 
\begin{theorem}{ \rm (Reiner-Saliola-Welker \cite[Thm. 4.9]{RSW}).}\label{thm:rsw}
Let $u \in \F^{W}$ act semisimply on $\R \ch$. Then there is an isomorphism of $W$-modules
\[ (\R \ch)_{\lambda} \cong_{W}  \bigoplus_{\substack{[X]: \lambda_{X} = \lambda }} \WH_{[X]}. \]
\end{theorem}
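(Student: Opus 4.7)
The plan is to combine Saliola's eigensection characterization of the flat idempotents with a local-to-global reduction that identifies each eigenspace with an induced Whitney homology representation. First I would fix a $W$-invariant homogeneous section $\uu$ of $\mathfrak{s}$ (for instance the uniform section of Section \ref{sec:eulhyp}), so that the resulting flat idempotents $\{\frake_X\}_{X \in \lat}$ are permuted by $W$ in accord with its action on $\lat$, and their orbit sums $\frake_{[X]} = \sum_{Y \in [X]} \frake_Y$ lie in $\R\F^W$. These flat-orbit idempotents are orthogonal and sum to the identity, so they induce a $W$-equivariant decomposition $\R\ch = \bigoplus_{[X]} \frake_{[X]} \cdot \R\ch$.

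Next I would apply Saliola's eigensection characterization: the hypothesis that $u \in \F^W$ acts semisimply on $\R\ch$ is equivalent to the statement $u = \sum_X \lambda_X \frake_X$, where $\lambda_X$ is the BHR eigenvalue of Theorem \ref{thm:bhr}. Since $u$ is $W$-invariant, $\lambda_X$ depends only on the orbit $[X]$, so one may group terms to write $u = \sum_{[X]} \lambda_{[X]} \frake_{[X]}$. Orthogonality of the $\frake_{[X]}$ then yields the $W$-equivariant identification
\[
(\R\ch)_\lambda \;=\; \bigoplus_{[X]:\, \lambda_{[X]} = \lambda} \frake_{[X]} \cdot \R\ch,
\]
and the theorem reduces to establishing the $W$-module isomorphism $\frake_{[X]} \cdot \R\ch \cong \WH_{[X]}$ for each orbit $[X]$.

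By Frobenius reciprocity, together with the definition $\WH_{[X]} = \ind_{N_X}^W(\WH_X \otimes \Det_{V/X})$, it suffices to prove the local equivariant statement $\frake_X \cdot \R\ch \cong_{N_X} \WH_X \otimes \Det_{V/X}$ at a single representative $X$ in each orbit. The vector-space dimensions already match by BHR, both being $|\mu(V,X)|$. To match the $N_X$-characters, I would localize to the arrangement $\A_X$, in which $X$ is the minimal nonzero flat and $\frake_X$ plays the role of the ``bottom'' idempotent. Comparing the recursive definition $\frake_X = \uu_X - \sum_{Y < X} \uu_X \cdot \frake_Y$ with the recursive description of the top-dimensional Whitney homology of $\lat(\A_X)$ via the $W$-equivariant Möbius function should produce the required character identity; the determinant twist $\Det_{V/X}$ enters through the $N_X$-action on the fundamental class of the perp sphere $X^\dagger$, exactly as in Example \ref{ex:oddcom}.

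The main obstacle is the local identification $\frake_X \cdot \R\ch \cong_{N_X} \WH_X \otimes \Det_{V/X}$ in a uniform, case-free way, because one must simultaneously track the $N_X$-action on the bases of flat idempotents and on the relevant Whitney homology groups, and one must canonically produce the $\Det_{V/X}$ twist. I would expect the cleanest route to be via a $W$-equivariant filtration of $\R\ch$ indexed by the support poset $\lat$, whose associated graded realizes $\bigoplus_X \WH_X \otimes \Det_{V/X}$; the semisimplicity of $u$ would then split this filtration $W$-equivariantly into eigenspaces, giving exactly the claimed decomposition.
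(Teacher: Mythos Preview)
The paper does not prove this statement at all: Theorem~\ref{thm:rsw} is quoted from \cite[Thm.~4.9]{RSW} and used as a black box in the proofs of Theorems~\ref{thm:coincidental} and~\ref{thm:allcox}. So there is no ``paper's own proof'' to compare against, and your proposal must be assessed on its own merits.

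As it stands, the proposal has two genuine gaps. First, you fix an invariant homogeneous section $\uu$ (e.g.\ the uniform section) at the outset and then assert that semisimplicity of $u$ forces $u=\sum_X \lambda_X \frake_X$ for the flat idempotents coming from \emph{that} section. This is not what Saliola's eigensection characterization says: the identity $u=\sum_X \lambda_X \frake_X$ holds for the idempotents arising from a section $\uu$ precisely when $\uu$ is an eigensection for $u$, and an arbitrary semisimple $u$ need not have the uniform section as an eigensection. You would either have to construct a $W$-invariant eigensection for the given $u$, or else argue that the $W$-module $\frake_{[X]}\cdot\R\ch$ is independent of the choice of invariant section. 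Second, and more seriously, the crux of the argument---the $N_X$-equivariant identification $\frake_X\cdot\R\ch \cong_{N_X} \WH_X\otimes\Det_{V/X}$---is left as something that ``should'' follow from a recursive comparison, and you yourself flag it as the main obstacle. Neither the recursion for $\frake_X$ nor a putative equivariant M\"obius recursion for Whitney homology immediately produces this isomorphism with the correct determinant twist; this is exactly the nontrivial content of the Reiner--Saliola--Welker result, and your sketch does not supply it.
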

To recover Theorem \ref{thm:bhr} from Theorem \ref{thm:rsw}, note that the dimension of $\WH_{X}$ is $|\mu(V,X)|$.

Our goal in Sections \ref{sec:coincidental} and \ref{sec:allcox} will be to relate the spaces in Theorems \ref{thm:GM} and \ref{thm:rsw} to the Eulerian representations.

\subsection{The Varchenko-Gelfand Ring}\label{sec:vargel}
Equipped with a description of $H^{*}(\com_{\U})$ for any subspace arrangement, we now wish to study in detail two particular (and related) cases: first, the case that the subspace arrangement is a reflection arrangement $\A$ in $V$, and second, the case of $\A^{d} = \A \otimes \R^{d}$ described in Example \ref{ex:oddcom} when $d \geq 3$ and odd. It will turn out that we can describe $H^{0}(\com_{A})$ in terms of $H^{*}(\oddcom_{A})$ and give an explicit construction of the cohomologically graded pieces of the latter. 

For an arrangement $\A = \{H_{i} \}_{i \in I}$, the ring of locally constant functions on $\com_{\A}$ is precisely $H^{0}(\com_{\A})$, and has a filtration by \emph{Heaviside functions}, where for each $i \in I$, the Heaviside function $x_{i} \in H^{0}(\com_{\A})$ is given by
\[ x_{i}(v):= \begin{cases} 1 & v \in H_{i}^{+} \\
0 & v \not \in H_{i}^{+}.
\end{cases}\]

In \cite{varchenkogelfand}, Varchenko and Gelfand use these Heaviside functions to describe $H^{0}(\com_{\A})$. While they give a presentation for non-central arrangements, we will only present the case that $\A$ is central here. To do so, we recall some basic facts about hyperplanes.

Define $E(\A)=E:= \R[e_{i}]_{H_{i} \in \A}$, and for a $k$-tuple of hyperplanes $M = (H_{1}, \dots, H_{k})$, write $e_{M} = e_{1}e_{2} \dots e_{k}$. The set $M$ is \emph{dependent} if 
\[ \codim_{V} \Big( \bigcap_{H_{i} \in M} H_{i} \Big) < |M|  \]
and \emph{independent} otherwise. 
If $M$ is minimally dependent---that is for any $H_{j} \in M$, $M \backslash H_{j}$ is independent---$M$ is called a \emph{circuit}. 
Let $C$ be any circuit of $\A$; then $C$ can be uniquely partitioned into two sets, $C^{+}$ and $C^{-}$ such that 
\[ \bigcap_{H_{i} \in C^{+}} H_{i}^{+} \cap \bigcap_{H_{j} \in C^{-}} H_{j}^{-} = \emptyset.\]

\begin{theorem}{\rm (Varchenko-Gelfand \cite[Thm. 4.5]{varchenkogelfand}).} \label{thm:VG}
The ring morphism defined by
$$
\begin{array}{rcl}
\Psi: E
& \longrightarrow & H^{0}(\com_{\A}) \\
e_{i} &\longmapsto & x_{i}
\end{array}
$$
induces a ring isomorphism $H^{0}(\com_{\A}) \cong E/ \mathcal{J}$, 
with $\mathcal{J} = \ker(\Psi)$ generated by:
\begin{enumerate}
    \item $e_{i}^{2} - e_{i}$ for $H_{i} \in \A$,
    \item For every circuit $C$ in $\A$, 
    \[  \prod_{H_{i} \in C^{+}} e_{i} \prod_{H_{j} \in C^{-}} (e_{j}-1) - \prod_{H_{i} \in C^{+}} (e_{i} -1) \prod_{H_{j} \in C^{-}} e_{j}. \]
\end{enumerate}
\end{theorem}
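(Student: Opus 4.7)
The plan is to prove $E/\mathcal{J} \cong H^{0}(\com_{\A})$ via three claims: (a) $\mathcal{J} \subseteq \ker(\Psi)$, so that $\Psi$ factors through $E/\mathcal{J}$; (b) $\Psi$ is surjective; and (c) $\dim_{\mathbb{R}} E/\mathcal{J} \leq c(\A)$, where $c(\A)$ denotes the number of chambers. Since chambers are the connected components of $\com_{\A}$, one has $\dim_{\mathbb{R}} H^{0}(\com_{\A}) = c(\A)$, so (a)--(c) force the induced surjection to be an isomorphism.

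Claims (a) and (b) are routine. For (a): $\Psi(e_{i}^{2} - e_{i}) = x_{i}^{2} - x_{i} = 0$ since $x_{i} \in \{0,1\}$; for a circuit $C = C^{+} \sqcup C^{-}$, the defining property of the partition guarantees that no chamber lies in $\bigcap_{C^{+}} H_{i}^{+} \cap \bigcap_{C^{-}} H_{j}^{-}$ or in the opposite-sign intersection, so both products in the circuit relation vanish chamber-wise and their difference lies in $\ker(\Psi)$. For (b): each chamber $c$ has a unique sign vector $\epsilon(c) \in \{+,-\}^{I}$, and its indicator function equals $\Psi(m_{\epsilon(c)})$, where
\[
m_{\epsilon} := \prod_{\epsilon_{i} = +} e_{i} \cdot \prod_{\epsilon_{j} = -} (1 - e_{j});
\]
since $\{\mathbf{1}_{c}\}_{c \in \ch(\A)}$ spans $H^{0}(\com_{\A})$, surjectivity follows.

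The main task is (c), which I would establish by showing that the chamber monomials $\{m_{\epsilon(c)}\}_{c \in \ch(\A)}$ already span $E/\mathcal{J}$. Modulo only the square relations, the elements $e_{i}$ and $1 - e_{i}$ become complementary orthogonal idempotents, so $\{m_{\epsilon}\}_{\epsilon \in \{+,-\}^{I}}$ is a basis of $E/(e_{i}^{2} - e_{i})$. Hence it suffices to prove the key lemma: for every non-realized sign vector $\epsilon$, $m_{\epsilon} \in \mathcal{J}$. To this end, take a minimal subset $M \subseteq I$ on which $\epsilon|_{M}$ is already non-realized. Since any independent family of hyperplanes realizes every sign pattern (by solving a non-degenerate linear system in the linearly independent normals), $M$ must be dependent, and minimality forces $M$ to be a circuit of $\A$ whose non-realized sign partitions on $M$ are exactly $(M^{+}, M^{-}) := (M \cap I^{+}, M \cap I^{-})$ and its opposite, where $I^{\pm} := \{i : \epsilon_{i} = \pm\}$. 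Let $R_{M} \in \mathcal{J}$ be the corresponding circuit relation, fix any $i_{0} \in M^{+}$, and set
\[
P := -e_{i_{0}} \cdot \prod_{i \in I^{+} \setminus M} e_{i} \cdot \prod_{j \in I^{-} \setminus M} (1 - e_{j}).
\]
Using $e_{i_{0}}^{2} = e_{i_{0}}$ and $e_{i_{0}}(e_{i_{0}} - 1) = 0$, one computes that $P$ annihilates the second (RHS) product in $R_{M}$ while collapsing the first (LHS) product into $\pm m_{\epsilon}$ (up to a sign coming from converting factors $(e_{j} - 1)$ to $-(1 - e_{j})$). Hence $\pm m_{\epsilon} = P \cdot R_{M} \in \mathcal{J}$, proving the lemma.

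The main obstacle is the combinatorial identification of the minimal non-realized subset $M$ as a circuit whose canonical sign partition aligns with $\epsilon|_{M}$; this is a standard fact from oriented matroid theory, asserting that the minimally non-realized sign patterns of a real hyperplane arrangement are exactly the signed circuits. With the lemma in hand, the spanning set $\{m_{\epsilon(c)}\}_{c \in \ch(\A)}$ has cardinality $c(\A)$, giving $\dim E/\mathcal{J} \leq c(\A)$ and (combined with (a) and (b)) the desired isomorphism.
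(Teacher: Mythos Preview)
The paper does not prove this theorem; it is quoted as a result of Varchenko and Gelfand and cited without argument, so there is no ``paper's proof'' to compare against.

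Your proof is correct and is essentially the standard one. The reductions (a) and (b) are fine, and the algebraic manipulation at the end is valid: multiplying $R_{M}$ by your auxiliary element $P$ kills the second product via $e_{i_{0}}(e_{i_{0}}-1)\in\mathcal{J}$ and collapses the first to $(-1)^{|M^{-}|+1}m_{\epsilon}$ using $e_{i_{0}}^{2}-e_{i_{0}}\in\mathcal{J}$, so $m_{\epsilon}\in\mathcal{J}$ as claimed. The one point worth making slightly more explicit is the step where, having shown that a minimal non-realized subset $M$ is a matroid circuit, you assert that $\epsilon|_{M}$ agrees with the canonical signed partition $(M^{+},M^{-})$ (or its negative). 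You invoke this as a standard oriented-matroid fact, which is fair; concretely, it follows because a circuit of size $k$ gives $k$ hyperplanes in general position in a $(k-1)$-dimensional quotient, and such an arrangement has exactly $2^{k}-2$ chambers, so the only two missing sign vectors on $M$ are the signed circuit and its opposite.
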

The map $\Psi$ imposes an ascending filtration on $H^{0}(\com_{\A})$ obtained from the natural degree grading on $E$: the $d^{th}$ layer in the filtration is the span of monomials in the variables $x_i$ having degree at most $d$. We will call its associated graded ring the \emph{associated graded Varchenko-Gelfand ring}, which Varchenko and Gelfand show has the following presentation:
\begin{definition}[Associated graded Varchenko-Gelfand ring]\label{def:vg}
For a central hyperplane arrangement $\A$, let $\VG(\A) = \VG := E / \mathcal{I}(\A)$ be the \emph{associated graded Varchenko-Gelfand ring}, where $\mathcal{I}(\A) = \I$ is generated by:
\begin{enumerate}
    \item $e_{i}^{2}$ for each $H_{i} \in \A$; 
    \item  For every circuit $C$ in $\A$ 
  \[ \tpart(e_{C}) :=  \sum_{H_{i} \in C} c(i) e_{C \backslash H_{i}}, \]
 where 
 \[ c(i) = \begin{cases} 1 & \textrm{if }H_{i} \in C^{-}, \\
 -1 & \textrm{if } H_{i} \in C^{+}.
 \end{cases} \]
\end{enumerate}
Let $\VG^{k}$ be the $k$-th graded piece of $\VG$ spanned by degree $k$ polynomials in the $e_{i}$.
\end{definition}

 \begin{remark}
The relations in $\VG$ bear a striking resemblance to the graded Orlik-Solomon algebra (see \cite{orlikterao}). However, the former ring is commutative while the latter is anti-commutative. For more discussion on this distinction, see Moseley \cite{moseley2017equivariant}, or Moseley-Proudfoot-Young \cite{moseley-proudfoot-young}. The ring $\VG$ is also easily confused with another graded commutative ring $\mathbb{U}(\A)$ defined by Orlik-Terao in \cite{orlik1994commutative}. However $\mathbb{U}(\A)$ is not isomorphic to $\VG$, as $\VG$ depends only on the underlying oriented matroid while $\mathbb{U}(\A)$ depends on the coordinates of $\A$. Cordovil carefully discusses this distinction in \cite{cordovil2002commutative}.
 \end{remark}
 
Varchenko-Gelfand show in \cite{varchenkogelfand} that when $\A$ is central, $\VG$ has an \emph{nbc-basis}; see Cordovil \cite[Cor. 2.8]{cordovil2002commutative}. To define an nbc-basis, impose an ordering on the hyperplanes in $\A$. A \emph{broken circuit} is a circuit with its largest element removed, and an \emph{nbc-set (non-broken-circuit-set)} is a set of hyperplanes that does not contain a broken circuit. The monomials indexed by nbc-sets of size $k$ form a basis for $\VG^{k}$.

\begin{example}[Braid Arrangement]\label{ex:VGbraid} Once again, let $W = S_{n}$, so $\A(S_{n})$ is the Braid arrangement with hyperplanes of the form $H_{ij}:= \{ x_{i} - x_{j} = 0 \}$. 
There is a nice description of the nbc-basis in this case (see Barcelo-Goupil in \cite{barcelo1995non}) as the monomials formed by choosing one element from each of the $n-1$ sets:
\[ \{ 1, e_{12} \}, \{ 1, e_{13}, e_{23} \}, \dots, \{ 1, e_{1n}, \dots ,e_{(n-1)n}\}. \]

Using the nbc-basis, it can be shown that the only circuits needed to generate $\I$ are of the form $C^{+} = \{ H_{ij}, H_{jk} \} $, $C^{-} = \{ H_{ik} \}$. 
Hence $\mathcal{I}$ is generated by 
\begin{enumerate}
    \item $e_{ij}^{2}$ for every $H_{ij} \in \A$
    \item $e_{ij}e_{jk} - e_{ij}e_{ik} - e_{jk}e_{ik}$ for every $H_{ij}, H_{jk},H_{ik} \in \A$.
\end{enumerate}
\end{example}
\begin{example}\label{ex:VGtypeB}(Type $B$) 
Let $W = B_{n}$. There are three types of hyperplanes in $\A(B_{n})$: they are, for $1 \leq i < j \leq n$, 
\[ H_{ij} = \{ x_{i} - x_{j} = 0 \}, \hspace{2em} \overline{H}_{ij} = \{ x_{i} + x_{j}  = 0 \}, \hspace{2em} H_{i} = \{ x_{i} = 0 \}. \]
Let the corresponding generators in $\VG$ be $e_{ij}$, $\Oe_{ij}$ and $e_{i}$. The nbc-monomials in this case are obtained by multiplying one element from each of the $n$ sets:
\[ \{ 1, e_{1} \}, \{ 1, e_{12}, \Oe_{12}, e_{2} \}, \dots, \{ 1, e_{1n}, \dots, e_{(n-1)n}, \Oe_{1n}, \dots, \Oe_{(n-1)n}, e_{n} \}. \]
In \cite{xico} Section 3.6, Xicotencatl shows that $\I$ is generated by seven quadratic relations in the $e_{ij}$, $\Oe_{ij}$ and $e_{i}$. In his notation, $H^{*}F_{\Z_{2}}(\R^{n}- \{ 0 \}, q)$ gives the Varchenko Gelfand ring for $\A(B_{q})$ if $n$ is odd, and his generators $A_{ji}$, $\overline{A}_{ji}$ and $A_{i0}$ correspond to our $e_{ij}$, $\Oe_{ij}$ and $e_{i}$, respectively. 
\end{example}

\begin{remark}
Both the type $A$ and $B$ reflection arrangements are \emph{supersolveable}, meaning that $\lat(\A)$ is a supersolveable lattice for both arrangements; see \cite{stanley2017supersolvable} for an in-depth treatment of supersolveable lattices. It is for precisely this reason that we have the particularly nice choice of bases described in Examples \ref{ex:VGbraid} and \ref{ex:VGtypeB}. In fact, the only real coincidental group that does \emph{not} have a supersolveable arrangement is $H_{3}$. This may be one reason the coincidental groups are named as such---they are types $A$, $B$, the dihedral group $I_{2}(m)$, and \emph{by coincidence} $H_{3}$. Thank you to the anonymous referee for pointing this out.
\end{remark}
 \begin{remark}
 When a group $W$ acts on $\A$ (and therefore also on $\com(\A)$), one naturally obtains a $W$-action on the Varchenko-Gelfand ring and its associated graded $\VG(\A)$ as follows. For $H_{i} \in \A$, let $\alpha_{i} \in H_{i}^{+}$ be the unit length normal vector to $H_{i}$. By construction, for $w \in W$,
 \[ w \cdot \alpha_{i} = c_{j} \alpha_{j}  \]
 where $c_{j} \in \{ \pm 1\}$, $H_{j} \in \A$ and $\alpha_{j} \in H_{j}^{+}$ is the unit length normal vector to $H_{j}$.  
 
 This induces an action on the Heaviside function $x_{i}$:
 \[ w \cdot x_{i} = \begin{cases} x_{j} & c_{j} = 1\\
 1-x_{j} & c_{j} = -1.
 \end{cases} \]
 The identification of $x_{i}$ with $e_{i}$ in
 Theorem \ref{thm:VG} then describes the $W$ action on the Varchenko-Gelfand ring. 
 To obtain a $W$-action on $\VG(\A)$, we again filter by degree so that
 \[ w \cdot e_{i} = c_{j}e_{j}.\]
 For example, in the braid arrangement, if one considers the hyperplane $H_{ij}$ and transposition $(ij)$, then
\begin{align*}
    (ij) \cdot \alpha_{ij} &= -\alpha_{ij} \text{ in } \com(\A),\\
    (ij) \cdot x_{ij} &= 1 - x_{ij} \text{ in } H^{0} \com(\A), \text{ and}\\
    (ij) \cdot e_{ij} &= -e_{ij} \text{ in } \VG(\A).
\end{align*}
 \end{remark}

There is an connection between $\VG$ and the cohomology ring of $\oddcom_{\A}$ for odd $d$ due to Moseley, which will prove instrumental to our results in Sections \ref{sec:coincidental} and \ref{sec:allcox}. 
\begin{theorem}{ \rm(Moseley, \cite[Thm. 1.4]{moseley2017equivariant}).} \label{thm:moseley} Let $\A$ be a real hyperplane arrangement and $d \geq 3$ an odd integer. Then $\VG$ is isomorphic as a graded ring to $H^{*}(\oddcom_{\A})$, with 
\[ \VG^{k} \cong H^{k(d-1)}(\oddcom_{\A}). \] 
If a finite group $W$ acts on $\A$, this isomorphism is $W$-equivariant. 
\end{theorem}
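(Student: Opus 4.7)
The plan is to build an explicit graded ring homomorphism from $E(\A)$ to $H^*(\oddcom_\A)$, verify it kills the Varchenko--Gelfand ideal $\I$, and then promote it to an isomorphism using the vector space dimension count already established in Example \ref{ex:oddcom}; the construction will be manifestly $W$-equivariant.

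For each $H_i \in \A$ pick a defining linear form $\alpha_i \colon V \to \R$ that is positive on $H_i^+$. The tensored map $\alpha_i \otimes \mathrm{id}_{\R^d} \colon V \otimes \R^d \to \R^d$ restricts to a continuous map
\[ f_i \colon \oddcom_\A \longrightarrow \R^d \smallsetminus \{0\} \simeq S^{d-1}. \]
Fixing a generator $\eta$ of $\tilde H^{d-1}(S^{d-1})$, set $\omega_i := f_i^*(\eta) \in H^{d-1}(\oddcom_\A)$. Declaring $\deg(e_i)=d-1$ so that $E^{k(d-1)}$ maps to $H^{k(d-1)}$, the assignment $e_i \mapsto \omega_i$ extends to a graded ring morphism $\Phi \colon E \to H^*(\oddcom_\A)$. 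To show $\Phi$ vanishes on $\I$, observe first that $\omega_i^2 = f_i^*(\eta^2) = 0$ since $\eta^2 \in \tilde H^{2(d-1)}(S^{d-1}) = 0$. For a circuit $C = \{H_{i_0},\dots,H_{i_k}\}$, the forms $\alpha_{i_j}$ satisfy a unique (up to scalar) linear relation whose signs, by Farkas' lemma applied to the partition $C = C^+ \sqcup C^-$, are precisely the signs $c(i_j)$ appearing in $\tpart(e_C)$, up to a global sign. Localizing to $\oddcom_{\A_X}$ where $X = \bigcap_j H_{i_j}$ reduces $\Phi(\tpart(e_C)) = 0$ to a local computation: the joint map $(f_{i_0},\dots,f_{i_k})\colon \oddcom_{\A_X} \to (\R^d \smallsetminus \{0\})^{k+1}$ factors through the subspace cut out by that linear relation, and a direct Gysin / Mayer--Vietoris argument there yields $\sum_j c(i_j)\,\omega_{i_0}\cdots\widehat{\omega}_{i_j}\cdots\omega_{i_k} = 0$. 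Naturality along $\oddcom_\A \hookrightarrow \oddcom_{\A_X}$ propagates the relation to $H^*(\oddcom_\A)$, so $\Phi$ descends to $\overline\Phi \colon \VG(\A) \to H^*(\oddcom_\A)$.

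To see that $\overline\Phi$ is an isomorphism, I would compare graded dimensions: Example \ref{ex:oddcom} shows $H^{k(d-1)}(\oddcom_\A)$ has dimension $\sum_{\codim(X)=k} |\mu(V,X)|$ and vanishes in all other degrees, while the nbc-basis theorem of Varchenko--Gelfand gives $\dim \VG^k$ the same value. Surjectivity is then enough, and I would obtain it by induction on $|\A|$ via deletion--restriction: pairing the long exact Gysin sequence for $\oddcom_\A \hookrightarrow \oddcom_{\A \smallsetminus H}$ against the corresponding short exact sequence of Varchenko--Gelfand rings that splits off the generator $\omega_H$. For equivariance, if $w \cdot \alpha_i = c\,\alpha_j$ with $c \in \{\pm 1\}$ and $w(H_i)=H_j$, then $w$ intertwines $f_i$ with $f_j$ up to composition with $x \mapsto c\,x$ on $\R^d \smallsetminus \{0\}$; since the antipodal map on $S^{d-1}$ has degree $(-1)^d = -1$ for odd $d$, this gives $w^*\omega_i = c\,\omega_j$, matching exactly the $W$-action on $\VG$ described in the remark following Example \ref{ex:VGbraid}.

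The main obstacle is the circuit step. Dimension counting, surjectivity via deletion--restriction, and equivariance are all formal once the map is built, but showing $\Phi(\tpart(e_C)) = 0$ requires a careful choice of orientations and a concrete chain-level computation in the local model at each supporting flat. The oddness of $d$ is essential in two places: it forces $\omega_i$ into even degree, so graded commutativity introduces no extra signs that would collide with the commutative Varchenko--Gelfand relations, and it produces the sign $(-1)^d = -1$ on the antipode needed to align the topological signs with the combinatorial signs $c(i)$. For even $d$ the parallel construction instead recovers the \emph{anti}-commutative Orlik--Solomon algebra, in keeping with the contrast flagged in the remark after Definition \ref{def:vg}.
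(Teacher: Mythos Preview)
This theorem is not proved in the paper: it is quoted from \cite{moseley2017equivariant} and used as a black box, so there is no proof here to compare your proposal against.

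Your outline is essentially Moseley's own strategy: define $\omega_i$ by pulling back a generator of $\tilde H^{d-1}(S^{d-1})$ along $f_i$, verify the relations of $\I$, and upgrade the map to an isomorphism. The squaring relation, the Goresky--MacPherson dimension count (already done in Example~\ref{ex:oddcom}), and the equivariance check via the degree $(-1)^d$ of the antipodal map are all correct as written. The substantive gap, which you flag yourself, is the circuit identity $\sum_j c(i_j)\,\omega_{C\setminus\{i_j\}}=0$: your factorization of $(f_{i_0},\dots,f_{i_k})$ through the linear subspace determined by the dependency is the right first move, but ``a direct Gysin / Mayer--Vietoris argument'' is a placeholder, not a proof, and the orientation/sign bookkeeping at this step is exactly where the content of Moseley's theorem lives. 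A secondary point: your surjectivity argument invokes a deletion--restriction short exact sequence for $\VG$, which the present paper does not establish; you would need to supply that separately (it holds, but it is not free).
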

In the case that $\A$ is an irreducible reflection arrangement with reflection group $W$, Theorem \ref{thm:moseley} says that 
\[ \VG(\A) \cong_{W} H^{*}(\oddcom_{\A}). \] If we then consider the localization arrangement $\A_{X}$ for $X \in \lat(\A)$, one has $N_{X}$, the set-wise stabilizer of $X$, acting on $\A_{X}$. Theorem \ref{thm:moseley} then implies that 
\[ \VG(\A_{X}) \cong_{N_{X}} H^{*}(\oddcom_{\A_{X}}). \]
\section{Eulerian representations for coincidental reflection groups}\label{sec:coincidental}
In this section, we will draw upon the theories in Sections \ref{sec:hyperplanes} and \ref{sec:topology} to develop a unified theory of Eulerian representations for coincidental reflection groups. 

Given a coincidental reflection group $W$, let $S$ be the Coxeter generators of $W$, $\A$ be its reflection arrangement, and $r$ be the rank of $\A$ (or equivalently, $|S|$). Because $W$ is coincidental, the exponents (equivalently, degrees) of $W$ can be expressed in terms of the {\it exponent gap} $g$ as
$$
1, 1+g, 1+2g, \dots, 1+ (r-1)g.
$$
Here are the ranks $r$ and exponent gaps $g$ for the coincidental groups:
\begin{center}
\begin{tabular}{|c|c|c|}\hline
     $W$ & $r$ & $g$  \\ \hline\hline
     $S_n$ &$n-1$ & $1$ \\ \hline
     $B_n$ &$n$ & $2$ \\ \hline
     $H_3$ &$3$ & $4$ \\ \hline
    $I_2(m)$ &$2$ & $m-2$ \\ \hline
\end{tabular}
\end{center}

\subsection{Generalizing Barr's element}
The key ingredient in developing the theory of the Eulerian representations for coincidental reflection groups lies in generalizing the technique used in Type $A$ by Barr in \cite{barr} and Gerstenhaber-Schack in \cite{Gerstenhaber-Schack}.

\begin{definition} For any reflection group $W$, the \emph{Barr-element} in $\R W$ is
\[ \s(W) := \sum_{s \in S} \sum_{\substack{w \in W \\ \Des(w) \subset \{ s\}}} w. \]
\end{definition}
Equivalently, in the notation of Section \ref{sec:desalg}, 
$$
\s(W) = \sum_{s \in S} Y_{\{s \}}.
$$
As before, when the context is clear write $\s(W) = \s$. Recall that $\varphi: \R \F^{W} \to \dopp$ is the algebra isomorphism in Theorem \ref{thm:bidigare}.
Let $\tilde{\s}:= \varphi^{-1}(\s).$
By Theorem \ref{thm:bidigare}, it follows that 
\[ 
\tilde{\s} = \sum_{\substack{[G] \in \F^{W} \\ \dim(G) = 1}} \zeta_{[G]} = \sum_{\substack{F \in \F\\\dim(F) = 1}}F \in \R \F^{W}. 
\]

\begin{prop}\label{prop:barrprop}
For any reflection group $W$, the element $\s$ acts semisimply on $\R W$. When $W$ is coincidental, $\s$ has $r+1$ eigenvalues $\sigma_{0} < \sigma_{1} < \dots < \sigma_{r}$ such that $\sigma_{k}$ is the number of 1-dimensional faces in $\A^{X}$ for any flat $X$ of dimension $k$. 
\end{prop}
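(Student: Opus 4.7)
The plan is to apply the Bidigare-Hanlon-Rockmore theorem (Theorem \ref{thm:bhr}) to the preimage $\tilde{\s} = \varphi^{-1}(\s) = \sum_{F \in \F:\ \dim F = 1} F \in \R\F^W$, and then transport the conclusions to $\R W$ via Bidigare's isomorphism $\varphi\colon \R\F^W \to \dopp$ (Theorem \ref{thm:bidigare}). Since every coefficient of $\tilde{\s}$ equals $0$ or $1$, the nonnegativity hypothesis of Theorem \ref{thm:bhr} is automatic, which would give that $\tilde{\s}$ acts semisimply on $\R\ch$. The left $\R W$-module isomorphism $\R\ch \cong \R W$ together with $\varphi(\tilde{\s}) = \s$ then translate this into semisimplicity of the $\s$-action on $\R W$ for any finite reflection group $W$.

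For the eigenvalues I would again invoke Theorem \ref{thm:bhr}: to each flat $X \in \lat$ it attaches an eigenvalue
\[
\lambda_X \;=\; \sum_{F \subset X} u_F \;=\; |\{F \in \F : \dim F = 1,\ F \subset X\}|,
\]
and the $1$-dimensional faces of $\A$ contained in $X$ are in an obvious bijection with the $1$-dimensional faces of the restriction arrangement $\A^X$. Hence $\lambda_X$ is the number of rays of $\A^X$. For coincidental $W$, Abramenko's theorem (as recalled in Section \ref{sec:reflarr}) guarantees that $\A^X$ is itself a reflection arrangement. A case-by-case inspection would then show that $\A^X$ is in fact coincidental of rank $\dim X$ with the same exponent gap $g$ as $W$: namely $\A(S_{k+1})$ for $W = S_n$, $\A(B_k)$ for $W = B_n$, and $\A(A_1)$ or $\A(I_2(g+2))$ for the rank $1$ and rank $2$ restrictions appearing in $H_3$ and $I_2(m)$. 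This makes $\sigma_k := \lambda_X$ well defined for any $X$ with $\dim X = k$. The strict inequalities $\sigma_0 < \sigma_1 < \cdots < \sigma_r$ would then follow by direct ray counts: $\sigma_k = 2^{k+1} - 2$ in Type $A$, $\sigma_k = 3^k - 1$ in Type $B$, $(\sigma_0, \sigma_1, \sigma_2, \sigma_3) = (0, 2, 12, 62)$ for $H_3$, and $(\sigma_0, \sigma_1, \sigma_2) = (0, 2, 2m)$ for $I_2(m)$.

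The main obstacle is the structural claim that every restriction $\A^X$ of a coincidental reflection arrangement is again coincidental of the same gap. This is immediate in Types $A$ and $B$, where one directly verifies $\A^X \cong \A(S_{\dim X+1})$ or $\A(B_{\dim X})$, but the case of $H_3$ requires more care: the effective stabilizer $N_X/W_X$ acting on a hyperplane $X$ has order only $4$ (yielding a visible reflection subgroup $A_1 \times A_1$), yet an incidence count shows that the six distinct lines $H' \cap X$, as $H'$ varies over the other fourteen hyperplanes of $\A(H_3)$, realize $\A^X$ as $\A(I_2(6))$, whose full reflection group is not itself a subquotient of $H_3$. Verifying this identification carefully, together with dispensing with $I_2(m)$, is the only substantive work beyond the direct application of Theorems \ref{thm:bhr} and \ref{thm:bidigare}.
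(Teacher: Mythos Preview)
Your approach is correct and follows the same skeleton as the paper: apply Theorem~\ref{thm:bhr} to $\tilde{\s}$ for semisimplicity and the eigenvalue formula, then transport via Theorem~\ref{thm:bidigare}. The paper, however, avoids your case-by-case work entirely. For the well-definedness of $\sigma_k$ it simply invokes the fact (implicit in the Abramenko/Aguiar--Mahajan result cited in Section~\ref{sec:reflarr}) that for coincidental $W$ one has $\A^X \cong \A^Y$ whenever $\dim X = \dim Y$; since $\sigma_X$ counts rays of $\A^X$, equality is immediate. For the strict inequalities $\sigma_0 < \cdots < \sigma_r$ the paper uses a one-line geometric argument: every flat is spanned by the rays it contains, so a proper inclusion $X \subsetneq Y$ forces $\sigma_X < \sigma_Y$. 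No explicit ray counts are needed.

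Your worry about $H_3$ is also unnecessary: the statement only concerns the isomorphism type of the \emph{arrangement} $\A^X$, not whether its full symmetry group sits inside $H_3$. That $\A^X \cong \A(I_2(6))$ for a hyperplane $X$ in $\A(H_3)$ is exactly what the cited fact predicts (rank $2$, gap $4$), and nothing more is required.
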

\begin{proof}
Because $\tilde{\s}$ has non-negative coefficients, it acts semisimply on $\R \ch$ by Theorem \ref{thm:bhr}, and thus $\s$ acts semisimply on $\R W$ by the algebra isomorphism in Theorem \ref{thm:bidigare}.  

Suppose $W$ is coincidental. Then for any $X,Y \in \lat$, one has $\A^{X} \cong \A^{Y}$ if and only if $\dim(X) = \dim(Y)$. By Theorem \ref{thm:bhr} the eigenvalues of $\s$ are indexed by $\sigma_{X}$ for each $X \in \lat$  and 
\[\sigma_{X} = \sum_{\substack{F \subset X \\ \dim(F) = 1}}1 = \# \{ F \subset X: \dim(F) = 1 \}. \]
A flat $X$ is spanned by the 1-dimensional faces it contains, and therefore if $X > Y $ in $\lat(\A)$ then $\sigma_{X} < \sigma_{Y}$. Thus $\sigma_{X}$ depends only on $\dim(X)$ and so can be written $\sigma_{k}$ for $0 \leq k \leq r$. 
\end{proof}
\begin{remark}
Proposition \ref{prop:barrprop} gives another explanation for why (as Barr observed) the eigenvalues of $\s(S_{n})$ are $\{2^{k+1} -2 \}$ for $0 \leq k \leq n-1$. In Type $A$ a dimension $k$ flat $X$ has restriction arrangement $\A^{X}(S_{n}) \cong \A(S_{k+1})$. Hence $\sigma_{k}$ counts the number of one-dimensional faces in $\A(S_{k+1})$, which correspond to ordered pairs of non-empty, disjoint subsets $(I,J)$ with $I \sqcup J = \{ 1,2,\dots, k+1\}$ (see Aguiar-Mahajan \cite[Sec. 6.3]{aguiarmahajan}). Given $(I,J)$, the face $F$ is where the coordinates in $V$ have $x_{i}$ constant for all $i \in I$ and $x_{j}$ constant for all $j \in J$, and where $x_{i}<x_{j}$ for $i \in I, j\in J$. There are $2^{k+1}-2$ choices for the set $\emptyset \subsetneq I \subsetneq \{1,\dots, k+1 \}$, determining the pair $(I,J)$.

Similarly, we can deduce from Proposition \ref{prop:barrprop} that in Type $B$, the eigenvalues of $\s(B_{n})$ are $\{ 3^{k} - 1 \}$ for $0 \leq k \leq n$ using the fact that a dimension $k$ flat $X$ has restriction arrangement $\A^{X}(B_{n}) \cong \A(B_{k})$. One-dimensional faces in $\A(B_{k})$ correspond to the $3^k-1$ assignments 
\[ \epsilon: \{ 1,\dots, k\} \longrightarrow \{+1,-1,0 \}\]
that avoid being identically zero (see Aguiar-Mahajan \cite[Sec. 6.7]{aguiarmahajan}). Given such an assignment $\epsilon$, the face $F$ is where $x_{j}=0$ if $\epsilon(j)=0$ and $\epsilon(i) x_i =\epsilon(j) x_j >0$  for $\epsilon(i),\epsilon(j) \neq 0.$ 

The eigenvalues for $\s(I_{2}(m))$ and $\s(H_{3})$ are listed below.
\begin{center}
\begin{tabular}{|c|c|c|c|c|}\hline
      $W$ &  $\sigma_{0}$ &  $\sigma_{1}$ &  $\sigma_{2}$ &  $\sigma_{3}$  \\ \hline\hline
    $I_{2}(m)$ & 0 & 2 & $2m$ &  \\ \hline
    $H_{3} $ &0 & 2 & 12 & $62$ \\ \hline
\end{tabular}
\end{center}
\end{remark}
Because $\s$ acts semisimply on $\R W$, by Lagrange interpolation, there are idempotents that project onto each eigenspace of $\s$. Let $(\R W)_{\sigma_{k}}$ be the $\sigma_{k}$-eigenspace of $\s$ and denote by $\s^{(k)}$ the projector onto $(\R W)_{\sigma_{k}}$:
\[ \s^{(k)} :=  \prod_{j \neq k} \frac{\s - \sigma_{j}}{\sigma_{k} - \sigma_{j}}.  \]
By construction, 
\[ \s = \sum_{k = 0}^{r} \sigma_{k} \cdot \s^{(k)}.\]
\begin{example}
In $\A(S_{3})$, one has $\s(S_{3}) = 2 + (12) + (23) + (123) + (132)$ and
\begin{align*}
& \s^{(0)} = \frac{1}{6} \big( 2 - (12) - (23) - (123) - (132) + 2(13) \big), \\
& \s^{(1)} = \frac{1}{2} \big( 1 - (13) \big), \\
& \s^{(2)} = \frac{1}{6} \big(1 + (12) + (23) + (13) + (123) + (132) \big). \\
\end{align*}
\end{example}

We will see in Theorem \ref{thm:coincidental} that $\s^{(k)}$ in Types $A$ and $B$ are precisely the Type $A$ and $B$ Eulerian idempotents from equations \eqref{eq:typeaeul} and \eqref{eq:typebeul}, and more generally:
\begin{prop}\label{prop:eulmatch}
When $W$ is coincidental,
$\s^{(k)} = \varphi(\mathfrak{e}_{k})$ for $0 \leq k \leq r$.
\end{prop}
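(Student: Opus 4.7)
The plan is to reduce the assertion to the single identity
\[
\tilde\s \;=\; \sum_{k=0}^{r} \sigma_k\, \mathfrak{e}_k \quad \text{in } \R\F^W,
\]
after which $\s^{(k)} = \varphi(\mathfrak{e}_k)$ follows formally. Indeed, the $\mathfrak{e}_k$ are a complete orthogonal system of idempotents (by Saliola) and the $\sigma_k$ are distinct (by Proposition~\ref{prop:barrprop}), so Lagrange interpolation applied to the identity above gives $\mathfrak{e}_k = \prod_{j \neq k} (\tilde\s - \sigma_j)/(\sigma_k - \sigma_j)$; applying the algebra isomorphism $\varphi$ and using $\varphi(\tilde\s) = \s$ then yields $\varphi(\mathfrak{e}_k) = \s^{(k)}$.

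To establish the identity, I will invoke Saliola's eigensection characterization from Section~\ref{sec:eulhyp}: it suffices to verify that the uniform section $\uu$ is an eigensection of $\tilde\s$ with eigenvalues $\{\sigma_X\}_{X \in \lat}$, i.e.
\[
\tilde\s^X \cdot \uu_X \;=\; \sigma_X \cdot \uu_X \qquad (X \in \lat),
\]
where $\tilde\s^X = \sum_{F \subseteq X,\, \dim F = 1} F$ and $\uu_X = (c^X)^{-1}\sum_{G \in \ch(\A^X)} G$. Given this, Saliola's theorem immediately gives $\tilde\s = \sum_X \sigma_X \mathfrak{e}_X$, and grouping by dimension (using that $\sigma_X = \sigma_{\dim X}$ depends only on $\dim X$ in the coincidental case, by Proposition~\ref{prop:barrprop}) produces the desired identity.

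The main obstacle is the eigenvalue equation, which is where the coincidental hypothesis is essential. Since $W$ is coincidental, $\A^X$ is itself a (product of) reflection arrangement(s), so its associated reflection group $W^X$ acts simply transitively on $\ch(\A^X)$. The element $\tilde\s^X$ is $W^X$-invariant because $W^X$ permutes the $1$-dimensional faces of $\A^X$, and $\sum_G G$ is $W^X$-invariant by construction. Since the $W^X$-invariants in $\R\ch(\A^X)$ form the one-dimensional line spanned by $\sum_G G$, the product $\tilde\s^X \cdot \sum_G G$ must equal some scalar times $\sum_G G$; by Theorem~\ref{thm:bhr} applied to $\A^X$, this scalar equals the coefficient sum $f_1(\A^X) = \sigma_X$. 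Dividing by $c^X$ yields the eigensection condition, and all remaining steps are formal.
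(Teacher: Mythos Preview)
Your proof is correct and follows essentially the same route as the paper's: verify that the uniform section is an eigensection of $\tilde\s$ (you spell out the $W^X$-invariance argument that the paper outsources to Aguiar--Mahajan \cite[Lemma 12.70]{aguiarmahajan}), invoke Saliola's eigensection characterization to obtain $\tilde\s = \sum_X \sigma_X \frake_X$, group by dimension, and apply $\varphi$. One small quibble: Theorem~\ref{thm:bhr} alone does not identify $\sum_G G$ as the $\sigma_X$-eigenvector, but your parenthetical ``coefficient sum'' is exactly the right observation---the augmentation $\epsilon(u\cdot v) = (\sum_F u_F)\,\epsilon(v)$ immediately gives the scalar.
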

\begin{proof}
Using the notation in Section \ref{sec:eulhyp},
\[ \tilde{\s}^{X} = \sum_{\substack{F \subset X \\ \dim(F) = 1}} F, \]
which is invariant under the action of the reflection group corresponding to $\A^{X}$ for any $X$. By an analogous argument to Aguiar-Mahajan \cite[Lemma 12.70]{aguiarmahajan} this implies that the uniform section is an eigensection for $\tilde{\s}$. Moreover, $\tilde{\s}$ is \emph{separating}, meaning that if $X > Y$ in $\lat(\A)$ then $\sigma_{X} < \sigma_{Y}$. By another theorem of Aguiar and Mahajan \cite[Thm. 12.17]{aguiarmahajan}, it follows that the uniform section is the unique eigensection for $\tilde{\s}$, giving the family of flat idempotents $\{ e_{X} \}_{X \in \lat}$ such that
$\tilde{\s} = \sum_{X \in \lat} \sigma_{X} \mathfrak{e}_{X}$. Because $\sigma_{X} = \sigma_{Y}$ when $\dim(X) = \dim(Y)$,
\[ \tilde{\s} = \sum_{k=0}^{r} \sigma_{k} \cdot \mathfrak{e}_{k}.\]
Now apply $\varphi.$
\end{proof}

\subsection{Implications for Eulerian subalgebras}
Recall that in the Introduction, we defined
an $\R$-linear subspace of $\R W$ called the \emph{Eulerian subspace},  
\[ \E(W):= \E = \Big\{  \sum_{w \in W} c_{w} w : c_{w} = c_{w'} \textrm{ if }  \des(w) = \des(w')  \Big\}.\]
While $\E$ always exists as a subspace of $\R W$ for any reflection group $W$, it is natural to ask whether $\E$ forms a subalgebra (i.e. like $\D$, is closed under multiplication). In Types $A$ and $B$, this is known to be true (see \cite{garsia}, \cite{Bergeron-Bergeron}); our framework allows us to answer this question for any reflection group.  

\begin{theorem}\label{thm:eulsubalg}
The Eulerian subspace $\E$ is a subalgebra if and only if $W$ is coincidental. Moreover, when the Eulerian subalgebra exists, it is always commutative. 
\end{theorem}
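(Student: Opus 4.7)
The approach is to identify the Eulerian subspace $\E$ with the polynomial algebra $\R[\s]$ generated by the Barr-element, using the eigenspace projectors $\s^{(k)}$ of Proposition \ref{prop:barrprop} together with the identification $\s^{(k)}=\varphi(\mathfrak{e}_k)$ from Proposition \ref{prop:eulmatch}. Note that $\s\in\E$ always, since expanding $Y_{\{s\}}=1+Z_{\{s\}}$ yields $\s=r\cdot 1+\sum_{w:\,\des(w)=1}w$, grouped by descent number.

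For the forward direction ($W$ coincidental $\Rightarrow$ $\E$ is a commutative subalgebra), I would show each $\s^{(k)}$ lies in $\E$. By Proposition \ref{prop:eulmatch} combined with Theorem \ref{thm:AMcoin}, $\s^{(k)}=\varphi(\mathfrak{e}_k)$ is the image of the coefficient of $t^k$ in $\sum_{[F]}\frac{\chi(\A^{\mathfrak{s}(F)})}{c^{\mathfrak{s}(F)}}\zeta_{[F]}$. When $W$ is coincidental, the restriction $\A^X$ depends only on $\dim X$ up to isomorphism, so these coefficients depend only on $\dim\mathfrak{s}(F)=|T|$, where $T$ is the type of $[F]$. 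Applying $\varphi$ yields $\s^{(k)}=\sum_T a_{|T|}Y_T$ in $\dopp$, and expanding $Y_T=\sum_{T'\subseteq T}Z_{T'}$ rewrites this as $\sum_{T'}b_{|T'|}Z_{T'}\in\E$. Since the $\s^{(k)}$ are $r+1$ pairwise orthogonal idempotents summing to $1$, hence linearly independent, and $\dim\E=r+1$, they form a basis of $\E$; in particular $\E$ is a commutative subalgebra.

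Conversely, if $\E$ is a subalgebra, then $\R[\s]\subseteq\E$. Since $\s$ acts semisimply, $\dim\R[\s]$ equals the number of distinct eigenvalues $\sigma_X$, and any maximal flag $\{0\}=X_0\subsetneq X_1\subsetneq\cdots\subsetneq X_r=V$ gives $\sigma_{X_0}<\sigma_{X_1}<\cdots<\sigma_{X_r}$ (passing to a larger flat strictly enlarges the restriction, hence its number of rays), so $\dim\R[\s]\geq r+1=\dim\E$. Hence $\R[\s]=\E$, which is commutative (this already establishes the second assertion of the theorem) and forces $\sigma_X$ to depend only on $\dim X$. To deduce coincidentality, I would rule out each non-coincidental irreducible reflection group ($D_n$ for $n\geq 4$; $F_4$; $H_4$; $E_6,E_7,E_8$) by exhibiting two flats of the same dimension whose restrictions carry different ray counts---e.g., in $D_n$, the codimension-$2$ flats $\{x_1=x_2\}\cap\{x_3=x_4\}$ and $\{x_1=x_2=0\}$ have restrictions $\A(D_{n-2})$ and $\A(B_{n-2})$, whose $1$-dimensional face counts differ. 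The main obstacle is precisely this final case-by-case step: the ray-count condition ``$\sigma_X$ depends only on $\dim X$'' is a priori strictly weaker than coincidentality (the iso-class of $\A^X$ depending only on $\dim X$), so ruling out the exceptional non-coincidental types appears to require appealing to the classification rather than a uniform structural argument.
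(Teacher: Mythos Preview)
Your overall strategy coincides with the paper's: reduce the question to whether $\sigma_X$ depends only on $\dim X$ by showing $\E$ is a subalgebra $\iff \E=\R[\s]$, then handle the non-coincidental types case by case. Your forward direction, showing $\s^{(k)}\in\E$ directly via Proposition~\ref{prop:eulmatch} and Theorem~\ref{thm:AMcoin}, is in fact more explicit than what the paper writes out (the paper simply cites Proposition~\ref{prop:barrprop}); this is the same computation that reappears in the proof of Theorem~\ref{thm:coincidental}\,(6).

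The gap is in your $D_n$ example. The restriction of $\A(D_n)$ to $X=\{x_1=x_2\}\cap\{x_3=x_4\}$ is \emph{not} $\A(D_{n-2})$: the hyperplanes $x_1=-x_2$ and $x_3=-x_4$ do not contain $X$ and restrict to the coordinate hyperplanes $t_1=0$ and $t_2=0$, so $\A^{X}=\A(D_{n-2})\cup\{t_1=0,\,t_2=0\}$. In particular for $n=4$ this is exactly $\A(B_2)$, the same as $\A^{\{x_1=x_2=0\}}$, and both flats have $\sigma=8$; your pair fails to distinguish $D_4$. (For $n\ge 5$ the two ray counts do differ, but $D_4$ must still be handled.) The paper avoids this by working at the other end of the lattice: it takes two \emph{dimension}-$2$ flats, $\{x_2=\cdots=x_n\}$ and $\{x_1=x_2,\,x_3=\cdots=x_n\}$, and uses the Barcelo--Ihrig combinatorial model to count that their rank-$2$ restrictions have $3$ and $4$ lines (hence $6$ and $8$ rays) for every $n\ge 4$.
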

\begin{proof}
The Eulerian subspace $\E$ always has dimension $r+1$ by definition. If $\E$ is a subalgebra, it will contain the subalgebra $\R \s$ generated by the Barr element $\s$.  Since $\s$
always acts semisimply, $\R \s$ will be commutative and have dimension equal to the number of distinct eigenvalues of $\s$.
Recall from the proof of Proposition~\ref{prop:barrprop}
that these eigenvalues are indexed by flats $X$ in $\lat(\A)$, with eigenvalue $\sigma_X$ equal to the number of of 1-dimensional faces (henceforth \emph{rays}) in $\A^{X}$.  Since
a flat $X$ is spanned by the rays it contains,
any complete flag of flats $\{0\}=X_0 \subsetneq X_1 \subsetneq \cdots \subsetneq X_{r-1} \subsetneq X_r=V$ gives rise to at least
$r+1$ distinct eigenvalues
$\sigma_{X_0} < \cdots < \sigma_{X_r}$.  Hence
$\E$ is a subalgebra if and only if $\E=\R \s$
if and only if any two flats $X, Y$ of the same dimension have $\sigma_X=\sigma_Y$. Proposition~\ref{prop:barrprop} showed that this occurs whenever $W$ is coincidental.

We check here that when $W$ is not coincidental, one
always has at least two flats $X, Y$ of the same
dimension with $\sigma_X \neq \sigma_Y$. This can be verified computationally for $H_{4}$ and $F_{4}$. The existence of such flats for $E_{6},E_{7}$ and $E_{8}$ can be deduced from computations in Orlik-Terao \cite[Appendix D]{orlikterao}, where they compute the number of lines in all possible arrangements $\A^{X}$ of rank $3$; every line must contain exactly 2 rays.

In Type $D$, a result of Barcelo-Ihrig \cite[Thm. 4.1]{barcelo1999lattices} describes a bijection between flats of $\A(D_{n})$ (for $n \geq 4$) and partitions $\lambda$ of the set $\{ \overline{1}, \dots, \overline{n}, 1, \dots, n\}$ such that
\begin{enumerate}
    \item $\lambda$ has at most one \emph{zero-block} $\lambda_{0}$ where if $i \in \lambda_{0}$ then $\overline{i} \in \lambda_{0}$ and $|\lambda_{0}| \neq 2$; and
    \item every non-zero block $\lambda_{j}$ of $\lambda$ has a partner $\overline{\lambda_{j}}$; if $i \in \lambda_{j}$, then $\overline{i} \in \overline{\lambda_{j}}$ (with $\overline{\overline{i}} = i$). 
\end{enumerate}

Write such a partition as $\lambda = (\overline{\lambda_{k}}, \dots, \overline{\lambda_{1}}, \lambda_{0}, \lambda_{1},\dots, \lambda_{k})$, even if $\lambda_{0} = \emptyset$.
Let $X_{\lambda}\in \lat(\A)$ be the flat corresponding to $\lambda$; when $\lambda$ has $2k$ non-zero blocks, $X_{\lambda}$ has dimension $k$. If $\lambda$ is a refinement of a partition $\lambda^{'}$, then $X_{\lambda} < X_{\lambda^{'}}$ in $\lat(\A)$. Consider the partition $\lambda$ with $\lambda_{0} = \emptyset$, $\lambda_{1} = \{ 1\}$ and $\lambda_{2} = \{ 2,3, \dots, n\}$. 
The partitions refined by $\lambda$ with two non-zero parts (corresponding to lines in $\A^{X_{\lambda}}$) are: 
\begin{enumerate}
    \item  $(\overline{\lambda_{1}} \cup \overline{\lambda_{2}}, \hspace{.3 em} \lambda_{0}, \hspace{.3 em} \lambda_{1} \cup \lambda_{2}) = (\{ \overline{1}, \overline{2}, \dots, \overline{n},\}, \{ \emptyset \}, \{ 1,2, \dots, n\}),$ 
    \item $(\lambda_{1} \cup \overline{\lambda_{2}},\hspace{.3 em} \lambda_{0},\hspace{.3 em} \overline{\lambda_{1}} \cup \lambda_{2}) = (\{ 1, \overline{2}, \dots, \overline{n},\}, \{ \emptyset \}, \{ \overline{1} ,2, \dots, n\}),$ and
    \item $(\overline{\lambda_{1}}, \hspace{.3 em} \overline{\lambda_{2}} \cup \lambda_{0} \cup \lambda_{2}, \hspace{.3 em} \lambda_{1}) = (\{ \overline{1}\}, \{ \overline{2}, 2, \dots, \overline{n}, n\}, \{ 1\})$.
\end{enumerate}
Let $\rho$ be the partition with $\rho_{0} = \emptyset$, $\rho_{1} = \{ 1,2\}$ and $\rho_{2} = \{ 3,\dots, n\}$. The partitions refined by $\rho$ which correspond to lines in $\A^{X_{\rho}}$ are
\begin{enumerate}
    \item   $(\overline{\rho_{1}} \cup \overline{\rho_{2}}, \hspace{.3 em} \rho_{0}, \hspace{.3 em} \rho_{1} \cup \rho_{2}) = (\{ \overline{1}, \overline{2}, \dots, \overline{n},\}, \{ \emptyset \}, \{ 1,2, \dots, n\}),$ 
    \item $(\rho_{1} \cup \overline{\rho_{2}}, \hspace{.3 em} \rho_{0}, \hspace{.3 em} \overline{\rho_{1}} \cup \rho_{2}) = (\{ 1, 2, \overline{3},\dots, \overline{n},\}, \{ \emptyset \}, \{ \overline{1} ,\overline{2}, 3 \dots, n\}),$ 
    \item $(\overline{\rho_{1}}, \hspace{.3 em} \overline{\rho_{2}} \cup \rho_{0} \cup \rho_{2}, \hspace{.3 em} \rho_{1}) = (\{ \overline{1}, \overline{2} \}, \{ \overline{3}, 3, \dots, \overline{n}, n\}, \{ 1,2\})$, and
    \item $(\overline{\rho_{2}}, \hspace{.3 em} \overline{\rho_{1}} \cup \rho_{0} \cup \rho_{1}, \hspace{.3 em} \rho_{2}) = (\{ \overline{3}, \dots, \overline{n} \}, \{ \overline{1}, 1, \overline{2}, 2\}, \{3, \dots, n\})$.
\end{enumerate}
Hence $\A^{X_{\lambda}}$ contains 6 rays and $\A^{X_{\rho}}$ contains 8 rays, but both $X_{\lambda}$ and $X_{\rho}$ have dimension $2$ in $\lat(\A)$.
\end{proof}
As in the case of Solomon's descent algebra $\D$, where we defined bases $Y_{T}$ and $Z_{T}$ for $T \subset S$ (see Section \ref{sec:desalg}), there are natural bases for $\E$ (and $\E^{\mathrm{opp}}$) defined by
 \[y_{\ell} :=\sum_{w: \des(w) \leq \ell} w = \sum_{\substack{T \subset S \\ |T| =\ell }} Y_{T}, \]
\[z_{\ell} :=  \sum_{w: \des(w) = \ell} w = \sum_{\substack{T \subset S \\ |T| =\ell }} Z_{T}\]
for $0 \leq \ell \leq r$. These bases will be useful in defining a generating function for the $\s^{(k)}$ in the subsequent section (Theorem \ref{thm:coincidental} \eqref{7}). 

\subsection{Main results}
At last, we can collect the theories developed in previous sections to give a description of the representations generated by the Eulerian idempotents. Recall 
that
\begin{equation}\label{eq:beta}
     \beta_{W,\ell}:= \frac{1}{|W|} \prod_{i=1}^{\ell} (t-e_{i}) \prod_{i=1}^{r-\ell} (t+e_{i}).
\end{equation}

\begin{theorem}\label{thm:coincidental} Let $W$ be a real coincidental reflection group. Then for $0 \leq k \leq r$, the following are isomorphic as $W$-representations:
\begin{enumerate}
    \item \label{2} $\VG^{k}$, the $k$-th graded piece of the associated graded Varchenko-Gelfand ring;
    \item \label{3} $H^{(d-1)k}(\oddcom_{\A})$ for $d \geq 3$ and odd;
    \item \label{4} $\bigoplus_{[X]} \WH_{[X]}$, where the direct sum is over all $[X] \in \lat(\A)/W$ with $\codim(X) = k$;
    \item \label{5}$(\R W)_{\sigma_{r-k}}$, the $\sigma_{r-k}$-eigenspace of $\s$;
    \item \label{6} $\mathfrak{e}_{r-k} \R \ch$, the representation generated by the Eulerian idempotent $\mathfrak{e}_{r-k}$; 
    \item \label{7} The left $\R W$-module $\R W E_{r-k}$, 
    where $\{E_k\} \subset \E(W)$ are idempotents defined by
    \[ \sum_{k = 0}^{r} t^{k}E_{k} := \sum_{\ell = 0}^{r} \beta_{W,\ell}(t) \cdot z_{\ell}. \]
\end{enumerate}
\end{theorem}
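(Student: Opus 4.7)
The plan is to prove the six assertions form a single cycle of isomorphisms, assembling the theorems already laid out in Sections \ref{sec:hyperplanes}, \ref{sec:topology}, and the beginning of Section \ref{sec:coincidental}. The core insight is that the Barr element $\s$ ties together the representation-theoretic descriptions (4), (5), (6) with the topological/combinatorial descriptions (1), (2), (3) through the Saliola-Aguiar-Mahajan eigensection machinery, and that Proposition \ref{prop:barrprop} gives exactly $r+1$ distinct eigenvalues precisely because $W$ is coincidental.

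First I would establish \eqref{2} $\cong_W$ \eqref{3} directly from Moseley's theorem (Theorem \ref{thm:moseley}), which is the degree-matching $\VG^k \cong H^{k(d-1)}(\oddcom_\A)$ for odd $d \geq 3$. Next, \eqref{3} $\cong_W$ \eqref{4} is exactly the equivariant Goresky--MacPherson calculation carried out in Example \ref{ex:oddcom}, using that $d$ is odd so that the orientation character on $(X\otimes\R^d)^\dagger$ becomes $\Det_{V/X}$ and produces the induced representations $\WH_{[X]}$. Then \eqref{4} $\cong_W$ \eqref{5} follows by applying Theorem \ref{thm:rsw} to $u=\tilde{\s}\in\R\F^W$: by Proposition \ref{prop:barrprop} the eigenvalue $\sigma_X$ depends only on $\dim(X)$ for coincidental $W$, so the flat orbits with $\lambda_{[X]}=\sigma_{r-k}$ are exactly those with $\codim(X)=k$, matching the direct sum in \eqref{4}. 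This gives \eqref{3} $\cong_W$ \eqref{5} after reindexing $\dim \leftrightarrow \codim$.

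For \eqref{5} $\cong_W$ \eqref{6}, I would use Bidigare's isomorphism $\varphi$ (Theorem \ref{thm:bidigare}) to transfer the Tits-side picture to $\R W$: since $\s^{(k)}=\varphi(\mathfrak{e}_k)$ by Proposition \ref{prop:eulmatch}, the image of the idempotent $\mathfrak{e}_{r-k}$ acting on $\R\ch \cong \R W$ is the $\sigma_{r-k}$-eigenspace $(\R W)_{\sigma_{r-k}}$, which is exactly $\R W \cdot \varphi(\mathfrak{e}_{r-k})$ as a left $\R W$-module. So it remains to identify $E_{r-k} = \varphi(\mathfrak{e}_{r-k})$. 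My plan here is to start from the Aguiar--Mahajan formula in Theorem \ref{thm:AMcoin}, observe that for coincidental $W$ the ratio $\chi(\A^{\mathfrak{s}(F)})/c^{\mathfrak{s}(F)}$ depends only on the dimension of $F$ (equivalently on $|T|$ where $T$ is the face-type), and thereby rewrite $\sum_k t^k\varphi(\mathfrak{e}_k)$ as a $t$-polynomial combination of the $y_\ell$ basis. Converting this into the $z_\ell$ basis via the triangular relation $y_\ell = \sum_{j\le\ell} \binom{r-j}{\ell-j} z_j$ should produce exactly the polynomials $\beta_{W,\ell}(t)$ as coefficients. Finally, \eqref{1} $\cong_W$ \eqref{2} is Moseley's theorem (Theorem \ref{thm:moseley}) again, closing the cycle.

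The main obstacle will be the last identification $E_{r-k}=\varphi(\mathfrak{e}_{r-k})$: one has to check that the specific rational polynomial $\beta_{W,\ell}(t)$ arises from evaluating the ratio $\chi(\A^X)/c^X$ for $\codim(X)=r-\ell$ in each coincidental type, using the product formulas for $\chi$ and $c^X$ in terms of the exponent gap $g$. The rank and exponent-gap data in the table immediately preceding this subsection suggest a uniform computation: the restriction arrangement $\A^X$ for a codimension-$(r-\ell)$ flat is again (a product involving) a coincidental arrangement of rank $r-\ell$ with the same gap $g$, so its characteristic polynomial factors in terms of $1, 1+g, \dots, 1+(r-\ell-1)g$ and its chamber count is the product of degrees. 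Plugging these into Theorem \ref{thm:AMcoin} and simplifying should recover $\beta_{W,\ell}(t)$ up to the normalization $(2/g)_r$, which is $c(\A)$ itself. Once this algebraic reconciliation is carried out, all six descriptions collapse to a single $W$-representation, completing the proof.
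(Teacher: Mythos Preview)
Your proposal is correct and follows essentially the same chain of equivalences as the paper: Moseley for \eqref{2}$\Leftrightarrow$\eqref{3}, equivariant Goresky--MacPherson for \eqref{3}$\Leftrightarrow$\eqref{4}, Reiner--Saliola--Welker plus Proposition~\ref{prop:barrprop} for \eqref{4}$\Leftrightarrow$\eqref{5}, Proposition~\ref{prop:eulmatch} for \eqref{5}$\Leftrightarrow$\eqref{6}, and the Aguiar--Mahajan formula for \eqref{6}$\Leftrightarrow$\eqref{7}. Two small points: your reference to ``\eqref{1}'' is spurious (the items are labeled \eqref{2}--\eqref{7}), and in your final paragraph the polynomial $\beta_{W,\ell}(t)$ is not a single ratio $\chi(\A^X)/c^X$ but the sum $\sum_{j\ge\ell}\binom{r-\ell}{j-\ell}\frac{((t-1)/g)_j}{(2/g)_j}$ coming from the $y\to z$ change of basis---the ``simplifying'' you allude to is exactly the Chu--Vandermonde summation the paper invokes to collapse this sum into closed form.
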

\begin{proof}
\noindent
\begin{itemize}
\item \eqref{2} if and only if \eqref{3}: follows from Theorem \ref{thm:moseley};
\item \eqref{3} if and only if \eqref{4}: follows from Theorem \ref{thm:GM} (2), applied as in Example \ref{ex:oddcom};
\item \eqref{4} if and only if \eqref{5}: follows from Theorem \ref{thm:rsw} and Proposition \ref{prop:barrprop} because a flat $X$ with $\codim(X) = k$ has $\sigma_{X} = \sigma_{r-k}$;
\item 
\eqref{5} if and only if \eqref{6}: follows from Proposition \ref{prop:eulmatch};
\item \eqref{6} if and only if \eqref{7} will follow from translating Theorem \ref{thm:AMcoin} into the context of the polynomial algebra $\dopp[t]$. Specifically, applying $\varphi$ to Theorem \ref{thm:AMcoin} gives 
\begin{equation}\label{eqeul}  \sum_{k=0}^{r} t^{k} \varphi(\mathfrak{e}_{k}) = \sum_{T \subset S} \frac{\chi(\A^{T})}{c^{T}} Y_{T},\end{equation}
where $\chi(\A^{T})$ is the characteristic polynomial of $\A^{\mathfrak{s}(F)}$ for some $F \in \F$ of type $T$, and $c^{T}$ is the number of chambers in $\A^{\mathfrak{s}(F)}$. The right side of \eqref{eqeul} can be simplified by grouping together subsets $T \subset S$ of the same size. Because $W$ is coincidental, $\chi(\A^{T})$ and $c^{T}$ depend only on the size of $T$. In particular, when $|T| = j$, 

\[ \chi(\A^{T}) = (t-1)(t-1-g)\hdots (t-1-g(j-1)) = g^{j} \Big( \frac{t-1}{g}\Big)_{j} \] and 
\[ c^{T} = (2)(2+g) \hdots (2+g(j-1))= g^{j}\Big( \frac{2}{g}\Big)_{j}.\]
Hence
\begin{equation} \label{eqeul2}  \sum_{k=0}^{r} t^{k} \varphi(\mathfrak{e}_{k}) = \sum_{j= 0}^{r} \sum_{\substack{T \subset S \\ |T| = j}}   \frac{\big(\frac{t-1}{g}\big)_{j}}{\big( \frac{2}{g}\big)_{j}} Y_{T} =  \sum_{j= 0}^{r}    \frac{\big(\frac{t-1}{g}\big)_{j}}{\big( \frac{2}{g}\big)_{j}} y_{j} .\end{equation}
Note that an element $w \in W$ with descent set $U$ of size $\ell$ will appear on the right hand side of \eqref{eqeul2} each time $U \subset T$ as $T$ varies over every subset of $S$. The set $U$ will appear in exactly $\binom{r-\ell}{j-\ell}$ subsets of size $j$. Thus
\begin{align}
\label{sum-that-will-use-Chu-Vandermonde}
  \sum_{k=0}^{r} t^{k} \varphi(\mathfrak{e}_{k}) &= \sum_{\ell=0}^{r}  \sum_{\substack{U \subset S \\ |U| = \ell}} Z_{U} \Big( \sum_{j=\ell}^{r} \binom{r-\ell}{j-\ell}\frac{\big(\frac{t-1}{g}\big)_{j}}{\big( \frac{2}{g}\big)_{j}} \Big)  = \sum_{\ell=0}^{r}z_{\ell} \Big( \sum_{j=\ell}^{r} \binom{r-\ell}{j-\ell}\frac{(\frac{t-1}{g})_{j}}{(\frac{2}{g})_{j}}\Big).
\end{align}
One can then check that on the right side of \eqref{sum-that-will-use-Chu-Vandermonde}, the innermost summation appearing in front of $z_{\ell}$ equals $\beta_{W,\ell}(t)$, via
the {\it Chu-Vandermonde summation} formula
\[ {}_{2}F_{1}\left(
\left. \begin{matrix}-n & b\\
& c\end{matrix}
\right| 1\right) = \frac{(c-b)_{n}}{(c)_{n}} \]
with $n=r-\ell$, $b = -(\frac{t-1}{g} - \ell)$ and $c = \frac{2}{g} + \ell$.
\end{itemize}
\end{proof}
\begin{remark}
An equivalent way to write $\beta_{W,\ell}$ is as \begin{equation}\label{eq:originalbeta} \beta_{W,\ell}(t):=\frac{\left( \frac{t+g-1}{g}-\ell \right)_\ell \left(\frac{t+1}{g}\right)_{r-\ell} }{\left(\frac{2}{g}\right)_r},\end{equation}
where $g$ is the exponent gap of $W$ and $(t)_{k} := (t)(t+1) \dots (t+k-1)$ is the rising factorial. We originally used the form of $\beta_{W,\ell}$ given by \eqref{eq:originalbeta}, and thank the anonymous referee for pointing out the form of $\beta_{W,\ell}$ used in \eqref{eq:beta}. To recover \eqref{eq:originalbeta} from \eqref{eq:beta}, note that when $W$ is coincidental, $e_{i} = 1 + (i-1)g$.
\end{remark}
\begin{remark}
Surprisingly, the polynomial $\beta_{W,\ell}$ defined in \eqref{eq:beta} has appeared before in the work of Reiner, Shepler and Sommers \cite[Thm 1.1]{reiner2019invariant}. Let $\bigwedge ^{\ell}V$ be the $\ell$-th exterior power of the reflection representation $V$ of a real coincidental group $W$. Then by Theorem 1.1 in \cite{reiner2019invariant},
\begin{equation}\label{eq:otherbeta}
\beta_{W,\ell}(t)= \frac{1}{|W|} \prod_{i=1}^{\ell}(t - e_{i}) \prod_{i=1}^{r-\ell} (t+ e_{i}) = \frac{1}{|W|} \sum_{w \in W} \frac{\chi_{\bigwedge^{\ell}V}(w)}{\chi_{\bigwedge^{\ell}V}(1)} \cdot t^{\dim(V^{w})},\end{equation}
where $\chi_{\bigwedge^{\ell}V}$ is the character of $\bigwedge^{\ell}V$ and $V^{w}$ is the $w$-fixed space of $V$. 

To derive \eqref{eq:otherbeta} from \cite{reiner2019invariant}, one needs to do a bit of work. In particular, Theorem 1.1 computes the Hilbert series of the $W$-invariant space 
\[ \hilb( (S(V^{*}) \otimes \bigwedge V^{*} \otimes \bigwedge V)^{W}, q,z,s), \] where $V^{*}$ is the dual of $V$ and $S(V^{*})$ is the symmetric algebra of $V^{*}$. We extract the $s^{\ell}$ coefficient of this Hilbert series and set $z = -q^{t}$. Combining \cite[Eq. 2.11]{reiner2019invariant} and taking the limit as $q$ goes to 1 then gives \eqref{eq:otherbeta}. The motivation for this comes from the theory of graded parking functions; see \cite[\S 10]{reiner2019invariant}. Note that Theorem 1.1 is in terms of exponents $e_{i}$ and \emph{co-exponents} $e_{i}^{*}$, but in the case of real reflection groups, $e_{i} = e_{i}^{*}$. 

We thank the anonymous referee for pointing out this remarkable and mysterious connection.
\end{remark}

\begin{remark}
The generating functions for the Type $A$ and $B$ Eulerian idempotents  (equations \eqref{eq:typeaeul}, \eqref{eq:typebeul}) are easily deduced from Theorem \ref{thm:coincidental} \eqref{7} by taking $r =n-1,g =1$ in Type $A$ and $r = n, g =2$ in Type $B$. Note that in Type $A$, to obtain \eqref{eq:typeaeul} one must multiply the formula in Theorem \ref{thm:coincidental} \eqref{7} by $t$.
\end{remark}

\begin{remark}\label{re:foulkes}
In \cite[Thm. 9]{Miller_Foulkes}, A. Miller shows that the change of basis matrix from the $z_{\ell}$-basis of $\E$ to the $\s^{(k)}$-basis is described by the transpose of the (reduced) Foulkes character matrix in Types $A$ and $B$ (as well as the complex reflection groups $G(m,1,n)$ for $m>2$; see Section \ref{sec:future}). Using Theorem \ref{thm:coincidental}, it is a straightforward to check that this surprising fact is true for $I_{2}(m)$ and $H_{3}$ as well. 
\end{remark}

\subsection{Connections to configuration spaces}\label{sec:config}
The final noteworthy feature of the Eulerian representations that we will discuss is their relationship to the cohomology of certain configuration spaces; recall that this was already known in Type $A$. 
\begin{definition}
The \emph{$n^{th}$ ordered configuration space} of a topological space $X$ is 
\[ \pconf_{n}(X): = \{ (x_{1}, \dots, x_{n}) \in X^{n}: x_{i} \neq x_{j} \textrm{ whenever } i \neq j \}. \]
\end{definition}
The case that $X = \R^{d}$ has been studied extensively. In \cite{arnold}, Arnol'd gave a presentation of $H^{*}\pconf_{n}(\R^{2})$; Fred Cohen extended this presentation to $H^{*}\pconf_{n}(\R^{d})$ for all $d\geq 2$ in \cite{cohen}. Importantly for our purposes,
when $d \geq 1$,
\[\pconf_{n}(\R^{d}) = \{ (x_{1}, \dots, x_{n}) \in \R^{dn}: x_{i} \neq x_{j} \text{ if } i \neq j \} = \oddcom_{\A(S_{n})}.  \]
Thus Theorem \ref{thm:coincidental} yields the following Corollary.

\begin{cor}\label{cor:typeaconfig}
For $0\leq k \leq n-1$ and odd $d \geq 3$,
\[ H^{k(d-1)}\pconf_{n}(\R^{d}) \cong_{S_{n}} \R S_{n} E_{n-1-k}\]
with $E_{n-1-k}$ as in Theorem \ref{thm:coincidental}.
\end{cor}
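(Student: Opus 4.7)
The plan is to derive this as an immediate specialization of Theorem~\ref{thm:coincidental} to the case $W = S_n$, using the equivalence of items \eqref{3} and \eqref{7}. First I would note that $S_n$ is a coincidental reflection group of rank $r = n-1$ with exponent gap $g = 1$, so all hypotheses of Theorem~\ref{thm:coincidental} apply. The reflection arrangement $\A(S_n)$ is the braid arrangement with hyperplanes $H_{ij} = \{x_i - x_j = 0\}$, and by the definition of $\oddcom_{\A}$ we have
\[ \oddcom_{\A(S_n)} = (V \otimes \R^d) \setminus \bigcup_{i<j} (H_{ij} \otimes \R^d). \]
Since $H_{ij} \otimes \R^d$ is exactly the locus $\{x_i = x_j\} \subset \R^{dn}$ of tuples where the $i$-th and $j$-th points of $(x_1, \dots, x_n) \in (\R^d)^n$ coincide, one obtains $\oddcom_{\A(S_n)} = \pconf_n(\R^d)$ as noted above the Corollary.

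Next I would invoke the $W$-equivariant isomorphism from Theorem~\ref{thm:coincidental} between items \eqref{3} and \eqref{7}, applied to $W = S_n$ with $r = n - 1$ and for $d \geq 3$ odd:
\[ H^{k(d-1)}(\oddcom_{\A(S_n)}) \cong_{S_n} \R S_n\, E_{r-k} = \R S_n\, E_{n-1-k}, \]
for each $0 \leq k \leq n-1$. Substituting the topological identification from the first step yields the claim.

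There is no genuine obstacle in this proof: the corollary is essentially a tautological specialization of the main theorem combined with the standard identification of the thickened braid hyperplane complement with the ordered configuration space of $\R^d$. The nontrivial content has already been established in the proofs of Theorems~\ref{thm:moseley},~\ref{thm:GM}, and~\ref{thm:coincidental}; this corollary merely packages their joint consequence in the form most directly comparable to the classical Type $A$ folklore result stated in the introduction.
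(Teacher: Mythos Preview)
Your proposal is correct and matches the paper's own argument essentially verbatim: the paper states the identification $\pconf_{n}(\R^{d}) = \oddcom_{\A(S_{n})}$ just before the corollary and then says the result follows immediately from Theorem~\ref{thm:coincidental}, which is precisely the specialization (items \eqref{3} $\Leftrightarrow$ \eqref{7} with $W=S_n$, $r=n-1$) you give.
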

Since $H^{*}\pconf_{n}(\R^{d}) = H^{*}(\oddcom_{\A})$, the Eulerian representations give a complete description of every nonzero graded component of $H^{*}\pconf_{n}(\R^{d})$. Corollary \ref{cor:typeaconfig} was already known by comparing the computation of $H^{*}(\oddcom_{\A})$ in \cite[Thm. 4.4 (iii)]{sundaramwelker} with the character computations of the Type $A$ Eulerian idempotents by Hanlon in \cite{Hanlon}. In our framework, the proof of Corollary \ref{cor:typeaconfig} follows immediately from Theorem \ref{thm:coincidental}.

Theorem \ref{thm:coincidental} provides a similar description for the Type $B$ Eulerian idempotents. 
\begin{definition}\label{def:orbconfig}
For a group $G$ acting on a topological space $X$, the \emph{$n$-th orbit configuration space} is 
\[ \pconf_{n}^{G}(X) := \{(x_{1}, \dots, x_{n}) \in X^{n}: g\cdot x_{i} \cap g \cdot x_{j} = \emptyset \text{ for } i \neq j \textrm{ and any } g\in G \}. \]
\end{definition}

Take $G = \Z_{2}$ and $X = \R^{d}$ with the action by the generator of $\Z_{2}$ on $z \in \R^{d}$ mapping $z $ to $-z$.  In his thesis, Xicocencatl gives a description of $H^{*}\pconf_{n}^{\Z_{2}}(\R^{d})$ for $d$ of any parity \cite[Thm. 3.1.3]{xico}. Once again, for our purposes, it is enough to note that
\[ \pconf_{n}^{\Z_{2}}(\R^{d}) = \{ (x_{1}, \dots, x_{n}) \in \R^{dn} : x_{i} \neq \pm x_{j} \text{ for } i \neq j \text{ and } x_{i} \neq 0 \} = \oddcom_{\A(B_{n})} . \]
Thus Theorem \ref{thm:coincidental} immediately gives an equivariant description of each nonzero graded piece of $H^{*}\pconf_{n}^{\Z_{2}}(\R^{d})$. 

\begin{cor}\label{cor:typebconfig}
For $0 \leq k \leq n$ and odd $d \geq 3$,
\[H^{k(d-1)}\pconf^{\Z_{2}}_{n}(\R^{d}) \cong_{B_{n}} \R B_{n} E_{n-k} \]
with $E_{n-k}$ as in Theorem \ref{thm:coincidental}.
\end{cor}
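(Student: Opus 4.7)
The plan is to deduce this corollary as an immediate consequence of Theorem \ref{thm:coincidental} applied to $W = B_n$, combined with the identification of the Type $B$ orbit configuration space with the thickened hyperplane complement already noted in the paragraph preceding the corollary statement.

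First, I would verify that $\pconf_n^{\Z_2}(\R^d) = \oddcom_{\A(B_n)}$ as $B_n$-spaces. The arrangement $\A(B_n) \subset \R^n$ consists of the hyperplanes $\{x_i = x_j\}$, $\{x_i = -x_j\}$, and $\{x_i = 0\}$, so the tensored arrangement $\A(B_n)^d$ in $\R^{dn}$ replaces each scalar coordinate $x_i$ by a $d$-dimensional vector and removes the loci where some $x_i = \pm x_j$ or some $x_i = 0$. By Definition \ref{def:orbconfig} applied to the $\Z_2$-action $z \mapsto -z$, this is exactly $\pconf_n^{\Z_2}(\R^d)$. The natural $B_n$-action on the left (coordinate permutation together with sign changes) matches the reflection action on the right, so the identification is $B_n$-equivariant.

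Next, I would invoke Theorem \ref{thm:coincidental}. The group $B_n$ is a coincidental reflection group of rank $r = n$ with exponent gap $g = 2$, and the equivalence of items (2) and (7) of that theorem gives
\[ H^{(d-1)k}(\oddcom_{\A(B_n)}) \cong_{B_n} \R B_n \, E_{r-k} = \R B_n \, E_{n-k} \]
for $0 \leq k \leq n$ and odd $d \geq 3$, where $E_{n-k}$ is defined by the generating function in Theorem \ref{thm:coincidental} specialized to $r = n$, $g = 2$. Combining with the space identification of the previous paragraph yields the corollary.

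There is no real obstacle here: the work has already been done in Theorem \ref{thm:coincidental}, and the corollary is essentially a repackaging of its statement in the geometric language of orbit configuration spaces. The only point requiring attention is the $B_n$-equivariance of the identification $\pconf_n^{\Z_2}(\R^d) = \oddcom_{\A(B_n)}$, which is transparent once one checks how the standard generators of $B_n$ (sign changes and transpositions) act on both sides.
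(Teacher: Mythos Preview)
Your proposal is correct and follows exactly the paper's approach: the paper notes just before the corollary that $\pconf_n^{\Z_2}(\R^d) = \oddcom_{\A(B_n)}$ and then states that Theorem~\ref{thm:coincidental} immediately gives the result, which is precisely what you do. One small slip: the relevant items in Theorem~\ref{thm:coincidental} are numbered (2) and (6), not (2) and (7).
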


For connections between $H^{*}\pconf_{n}(\R^{3})$ and $H^{*}\pconf^{\Z_{2}}_{n}(\R^{3})$ with polynomial factorizations over finite fields, see recent work by Hyde \cite{hyde} and Peterson-Tosteson \cite{petersen2020factorization}.

\section{Eulerian representations for finite Coxeter groups}\label{sec:allcox}
We now turn to the case that $W$ is an arbitrary finite Coxeter group with reflection arrangement $\A$. For our purposes, the key differences between coincidental and general reflection groups are that 
\begin{itemize}
    \item[(1)] in general $\A^{X}$ is not necessarily a reflection arrangement, and
    \item[(2)] if $\dim(X) = \dim(Y)$ for $X,Y \in \lat(\A)$, it is not necessarily true that $\sigma_{X}=\sigma_{Y}$.
\end{itemize}
Hence the eigenspaces of $\s$ will not effectively group together all flats of the same codimension as in the coincidental case. 

To combat this problem, we do two new things. First we introduce an element $\T \in \R W$ whose eigenspaces will be indexed by flat orbits $[X]$. Second, we will introduce a finer grading on $\VG(\A)$ by flat. 
\subsection{A Barr-like element}
\label{Barr-like-element-section}
We first define the element of $\R W$ whose eigenspaces will be indexed by flat orbits. 

\begin{definition}
Let 
\[ \T:= \sum_{T \subset S} \sum_{\substack{w \in W \\ \Des(w) \subset T }} c_{T} w= \sum_{T \subset S} c_{T} Y_{T} \]
where the collection of coefficients $\{ c_{T} \}_{T \subset S} \subset \R$ are positive and algebraically independent over $\Q$.
\end{definition}
Recalling that $\varphi: \R \F^{W} \to \dopp$ is the isomorphism in Theorem \ref{thm:bidigare}, applying $\varphi^{-1}$ to $\T$ gives:
\[ \tilde{\T} := \varphi^{-1}(\T)= \sum_{F \in \F} c_{[F]} F = \sum_{[F] \in \F^{W}} c_{[F]} \zeta_{[F]} \]
where $c_{[F]} = c_{T}$ for $F$ of type $T$.

\begin{prop}\label{prop:algindbarr}
The element $\T$ acts semisimply on $\R W$ with eigenvalues $\tau_{[X]}$ for $[X] \in \lat(\A)/W$, such that $\tau_{[X]} = \tau_{[Y]}$ if and only if $[X] = [Y]$. 
\end{prop}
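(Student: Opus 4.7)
The plan is to deduce the proposition from Theorem~\ref{thm:bhr} applied to $\tilde{\T} \in \R \F^{W}$, then transfer to $\T$ via the isomorphism $\varphi$ of Theorem~\ref{thm:bidigare}. Since each $c_T > 0$, the element $\tilde{\T}$ has strictly positive face-coefficients, so Theorem~\ref{thm:bhr} directly yields semisimplicity on $\R \ch$ with eigenvalues
\[
\lambda_X \;=\; \sum_{F \subset X} c_{[F]}, \qquad X \in \lat,
\]
of multiplicity $|\mu(V,X)|$. Both properties then transfer to $\T$ acting on $\R W$.

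Next I would observe that $\lambda_X$ depends only on the orbit $[X]$: for any $w \in W$ the substitution $F = wF'$ together with $c_{[wF']} = c_{[F']}$ gives $\lambda_{wX} = \lambda_X$, so we may set $\tau_{[X]} := \lambda_X$. Grouping the defining sum by face type yields
\[
\tau_{[X]} \;=\; \sum_{T \subset S} c_T \cdot n_{[X],T},
\]
where $n_{[X],T}$ is the number of faces of type $T$ contained in any representative of $[X]$. Algebraic independence of $\{c_T\}_{T \subset S}$ over $\Q$ then reduces the distinctness statement $\tau_{[X]} \neq \tau_{[Y]}$ to the combinatorial claim that the integer vector $(n_{[X],T})_{T \subset S}$ determines $[X]$.

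Proving that combinatorial claim is the main obstacle. My approach would be to exploit the fundamental chamber: each orbit $[X]$ admits a subset $T_X \subset S$ such that the support $X_{T_X} := \mathfrak{s}(F_{T_X})$ of the type-$T_X$ face of $c_1$ lies in $[X]$. Parametrizing type-$T$ faces of $\A$ by their support gives the formula
\[
n_{Z,T} \;=\; \frac{|N_{X_T}|}{|W_{X_T}|} \cdot \big| \{Z' \in [X_T] : Z' \subseteq Z\} \big|.
\]
Applying this with $T = T_X$ and $Z = X$, the only $Z' \in [X]$ contained in $X$ is $Z' = X$ itself (since distinct elements of an orbit have the same dimension), so $n_{X,T_X} = |N_X/W_X| > 0$. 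For any representative $Y$ of an orbit $[Y] \neq [X]$ with $\dim Y \leq \dim X$, any $Z' \in [X]$ with $Z' \subseteq Y$ satisfies $\dim Z' = \dim X \geq \dim Y$, forcing $Z' = Y$---but then $Y \in [X]$, a contradiction. So $n_{Y,T_X} = 0$. If instead $\dim Y > \dim X$, the symmetric argument with $T_Y$ produces $n_{Y,T_Y} > 0 = n_{X,T_Y}$. Thus some face type $T$ always separates the counts, and distinctness follows.
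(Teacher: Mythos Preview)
Your proof is correct and follows the same overall strategy as the paper: semisimplicity from Theorem~\ref{thm:bhr} via positivity of the $c_T$, transfer through Bidigare's isomorphism $\varphi$, and reduction of eigenvalue distinctness to the combinatorial claim that the integer vector of face-type multiplicities $(m_T^X)_{T\subset S}$ determines $[X]$.

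The only difference is in how that last combinatorial claim is handled. You derive an explicit formula for $n_{Z,T}$ in terms of $|N_{X_T}|/|W_{X_T}|$ and then exhibit a specific type $T_X$ at which the vectors for $[X]$ and $[Y]$ differ. The paper's argument is shorter and avoids the formula: assuming the vectors agree, it first notes that $\dim X=\dim Y$ (since the maximal $|T|$ with $m_T^X>0$ equals $\dim X$), then takes any maximal face $F\subset X$, observes that $m_{\mathrm{type}(F)}^Y=m_{\mathrm{type}(F)}^X>0$ forces the existence of some $G=wF\subset Y$ of the same type, and concludes $wX=\mathfrak{s}(wF)=\mathfrak{s}(G)=Y$ since $G$ must also be maximal in $Y$. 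Your route gives slightly more quantitative information (the exact multiplicity $|N_X|/|W_X|$), while the paper's maximal-face trick gets to the conclusion with less bookkeeping.
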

\begin{proof}
Let $X, Y \in \lat$. 
It is clear that $\T$ acts semisimply on $\R W$ because $\tilde{\T}$ acts semisimply on $\R \ch$ as each $c_{[F]} > 0$. Suppose that each face of type $T\subset S$ lies in $X$ with multiplicity $m^{X}_{T}$ and in $Y$ with multiplicity $m_{T}^{Y}$. Hence 
\[\tau_{X} = \sum_{T \subset S} m_{T}^{X}c_{T}. \]
and similarly for $\tau_{Y}$. By construction $\tau_{X} = \tau_{Y}$ if $[X] = [Y]$. On the other hand, if $\tau_{X} = \tau_{Y},$ then because the coefficients $c_{T}$ are algebraically independent over $\Q$, it follows that $m^{X}_{T} = m^{Y}_{T}$ for every $T \subset S$. This forces $\dim(X) = \dim(Y)$. Moreover, for a maximal face $F$ in $X$, there must be a face $G \in Y$ such that $wF = G$ for some $w \in W$. Because $\dim(X) = \dim(Y)$ and $F$ is maximal, $G$ is maximal. Since the $W$-action commutes with $\mathfrak{s}$, 
\[ w X = \mathfrak{s}(wF) =\mathfrak{s}(G) = Y. \] 
Hence $[X] = [Y]$. 
\end{proof}
\subsection{A finer grading for $\mathcal{V}$}
We would like to give an interpretation of the decomposition of $\R W$ by $\T$ in terms of graded pieces of $\VG(\A)$. In order to do so, we must prove that $\VG(\A)$ admits a grading by flats. Our work will parallel the analogous result by Orlik-Terao \cite[Thm. 3.26, Cor 3.27]{orlikterao} that the Orlik-Solomon algebra has a flat decomposition.

Recall that $E = \R[e_{H_{i}}]_{H_{i} \in \A}$ and $\VG = E / \I$ (Definition \ref{def:vg}), where $\I$ is generated by 
\begin{enumerate}
    \item $e_{i}^{2}$ for every $H_{i} \in \A$ and,
    \item $\tpart(e_{C})$ for every circuit $C$ in $\A$.
\end{enumerate}

For $M = (H_{i_{1}}, \dots, H_{i_{k}})$, write $e_{M} = e_{i_{1}}\dots e_{i_{k}}$ and $\cap M = H_{i_{1}} \cap H_{i_{2}} \cap \dots \cap H_{i_{k}}$. Define for each $X \in \lat(\A)$ the $\R$-subspace $E_{X}:= \R\{ e_{M}: \cap M = X \}$. Note that $E$ has a vector space decomposition
\begin{equation}\label{eq:flatdecom} E = \bigoplus_{X \in \lat(\A)} E_{X}. \end{equation}
We will show that this decomposition holds for $\VG$ as well. 

\begin{theorem}\label{thm:vggrading}
For a central arrangement $\A$, there is a vector space decomposition
\[ \I = \bigoplus_{X \in \lat(\A)} \I \cap E_{X} \]
inducing the decomposition
\[ \VG(\A) = \bigoplus_{X \in \lat(\A)} \VG(\A)_{X},\]
where $\VG(\A)_{X}:= E_{X}/\I \cap E_{X}$.
\end{theorem}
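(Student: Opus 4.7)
The plan is to lift the vector-space decomposition \eqref{eq:flatdecom} of $E$ to the ideal $\I$ by showing that $\I$ is spanned by elements each of which lies in a single summand $E_X$. Since $E = \bigoplus_{X \in \lat(\A)} E_X$ is a direct sum of subspaces and $\I$ is a subspace of $E$, once we know that a spanning set of $\I$ consists of flat-homogeneous elements, the decomposition $\I = \bigoplus_X (\I \cap E_X)$ follows formally; quotienting then yields the decomposition of $\VG(\A)$.

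First, I would check that each of the chosen generators of $\I$ lies in a single $E_X$. For the quadratic generator, clearly $e_i^2 \in E_{H_i}$. For the circuit relation $\tpart(e_C) = \sum_{H_i \in C} c(i)\, e_{C \setminus H_i}$, one needs the key combinatorial fact that $\bigcap (C \setminus H_i) = \bigcap C$ for every $H_i \in C$. This follows from minimality of the circuit: $C \setminus H_i$ is independent, so $\codim\bigl(\bigcap (C \setminus H_i)\bigr) = |C|-1$, while dependence of $C$ gives $\codim(\bigcap C) \leq |C|-1$; combined with the obvious inclusion $\bigcap C \subseteq \bigcap(C \setminus H_i)$, both flats must coincide. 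Hence every term of $\tpart(e_C)$ lies in $E_{\bigcap C}$, so $\tpart(e_C) \in E_{\bigcap C}$.

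Next I would observe that multiplication in $E$ respects the flat decomposition in the sense that $E_N \cdot E_X \subseteq E_{N \vee X}$, where $\vee$ is the join in $\lat(\A)$ (i.e.\ intersection of subspaces in $V$). Consequently, for any monomial $e_N \in E_N$, the products $e_N \cdot e_i^2$ and $e_N \cdot \tpart(e_C)$ lie in the single summands $E_{N \vee H_i}$ and $E_{N \vee \bigcap C}$, respectively. Since $\I$ is generated as a two-sided (equivalently, one-sided, as $E$ is commutative) ideal by the elements in the list above, $\I$ is spanned as an $\R$-vector space by flat-homogeneous elements.

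Finally, combining with the direct sum $E = \bigoplus_X E_X$, any element $\alpha \in \I$ can be written uniquely as $\alpha = \sum_X \alpha_X$ with $\alpha_X \in E_X$, and the preceding paragraph shows that each $\alpha_X$ lies in $\I$. Hence $\alpha_X \in \I \cap E_X$, giving $\I = \bigoplus_X (\I \cap E_X)$. Passing to the quotient then produces the flat decomposition $\VG(\A) = \bigoplus_X \VG(\A)_X$ with $\VG(\A)_X = E_X /(\I \cap E_X)$. The only real content is the circuit computation $\bigcap(C \setminus H_i) = \bigcap C$; everything else is formal once the generators are verified to be flat-homogeneous, which is the main (and essentially only) obstacle.
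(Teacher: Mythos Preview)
Your proof is correct and follows essentially the same route as the paper: verify that each ideal generator ($e_i^2$ and $\tpart(e_C)$) is flat-homogeneous, then use that multiplying by a monomial $e_M$ lands the result in the single summand indexed by $\cap M$ joined with the flat of the generator. You even supply the codimension argument for the key circuit fact $\bigcap(C\setminus H_i)=\bigcap C$, which the paper merely asserts.
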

\begin{proof}
It is clear that $\sum_{X \in \lat(\A)} \I \cap E_{X} \subset \I$. By \eqref{eq:flatdecom}, it is sufficient to show the other containment. Since $E$ is spanned by $e_{M}$, it will follow from the generating relations of $\I$ that $\I \subset \sum_{X \in \lat(\A)} \I \cap E_{X}$ if for any subset of hyperplanes $M$ in $\A$,
\begin{align*}
(1)& \hspace{1em} e_{i}^{2}e_{M} \in \sum_{X \in \lat(\A)} \I \cap E_{X} \textrm{ for every }H_{i} \in \A \textrm{ and,} \\
(2)& \hspace{1em} e_{M}\tpart(e_{C})\in  \sum_{X \in \lat(\A)} \I \cap E_{X} \textrm{ for every circuit C in }\A.
\end{align*}

The first condition is satisfied because $e_{i}^{2}e_{M} \in \I \cap E_{X}$ for $X = \cap (H_{i} \cup M)$. The second condition follows from the fact that if $C$ is a circuit, $\cap C = \cap C \backslash H_{i}$ for any $H_{i} \in C$. Hence $e_{M}\tpart(e_{C}) \in \I \cap E_{Y}$ for $Y = \cap (M \cup C)$.
\end{proof}
\begin{remark}
Since $\VG(\A)$ has an nbc-basis, Theorem \ref{thm:vggrading} implies that $\VG(\A)_{X}$ also has an nbc-basis of monomials $e_{M}$ indexed by nbc-sets $M$ with $\cap M = X$.
\end{remark}
Recall that $\A_{X}$ is the localized arrangement at $X$. We would like to relate $\VG(\A)_{X}$ to the top degree of $\VG(\A_{X})$.
\begin{prop}\label{prop:vgflat}
For any $X \in \lat(\A)$ with $\codim(X) =k$, there is an  $N_{X}$-equivariant isomorphism $\VG^{k}(\A_{X}) \cong_{N_{X}} \VG(\A)_{X}$.
\end{prop}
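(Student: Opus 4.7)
The plan is to construct an $N_X$-equivariant ring homomorphism $\phi\colon \VG(\A_X)\to\VG(\A)$ coming from the inclusion $\A_X\hookrightarrow\A$, and then show that its restriction to the top-degree piece gives the desired isomorphism with $\VG(\A)_X$. Concretely, I would define $\phi$ on generators by sending the generator $e_i$ of $\VG(\A_X)$ (for $H_i\in\A_X$) to the corresponding generator $e_i$ of $\VG(\A)$. Well-definedness requires checking that the relations of $\VG(\A_X)$ map into $\I(\A)$: the squared relations $e_i^2$ do so trivially, and the circuit relations $\tpart(e_C)$ do so because the matroid of $\A_X$ is the restriction of the matroid of $\A$ to $\A_X$, so circuits of $\A_X$ are exactly circuits of $\A$ contained in $\A_X$, with the sign partition $C^+,C^-$ intrinsic to the affine dependence among normal vectors. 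The $N_X$-equivariance of $\phi$ is then immediate: for $w\in N_X$ and $H_i\in\A_X$, the action $w\cdot e_i=c_j e_j$ makes sense in both rings (since $wH_i=H_j\in\A_X$), and $\phi$ commutes with this action on generators.

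The heart of the proof is identifying the image of $\phi^k\colon\VG^k(\A_X)\to\VG(\A)$ with $\VG(\A)_X$. By the remark following Theorem~\ref{thm:vggrading}, $\VG(\A)_X$ has an nbc-basis indexed by nbc-sets $M$ of $\A$ with $\cap M=X$, all of size $k=\codim X$. Meanwhile, $\VG^k(\A_X)$ has an nbc-basis indexed by nbc-bases of $\A_X$ (inheriting the ordering from $\A$). I will show these two index sets coincide as subsets of $\A_X$: an nbc-set $M$ of $\A$ of size $k$ with $\cap M=X$ automatically lies in $\A_X$, and conversely, broken circuits of $\A$ contained in some $M\subset\A_X$ coincide with broken circuits of $\A_X$ contained in $M$, since if $B=C\setminus\{\max C\}\subset\A_X$ for a circuit $C$ of $\A$, then $\cap C=\cap B\supset X$ forces $\max C\supset X$ and hence $\max C\in\A_X$, so that $C$ is itself a circuit of $\A_X$. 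Since $\phi^k$ sends each such basis element $e_M$ to itself, it carries the nbc-basis of $\VG^k(\A_X)$ bijectively to that of $\VG(\A)_X$ and is therefore an $N_X$-equivariant vector-space isomorphism.

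The main obstacle is ensuring that $\phi^k$ actually lands inside $\VG(\A)_X$ rather than spilling across other flat-summands $\VG(\A)_Y$ with $Y\supsetneq X$. This could fail on dependent degree-$k$ monomials $e_M$ with $M\subset\A_X$, whose geometric intersection $\cap M$ is a strictly larger flat; however, such $e_M$ already vanish in $\VG$. Indeed, for any circuit $C$, multiplying the relation $\tpart(e_C)=0$ by $e_H$ for a fixed $H\in C$ collapses every summand except one via the relation $e_H^2=0$, yielding $c_H\cdot e_C=0$ and hence $e_C=0$; this forces $e_M=0$ whenever $M$ contains a circuit. Thus $\VG^k(\A_X)$ is in fact spanned by $e_M$ for bases $M$ of $\A_X$, all of which satisfy $\cap M=X$, so $\phi^k$ factors through $\VG(\A)_X$ as required, completing the identification $\VG^k(\A_X)\cong_{N_X}\VG(\A)_X$.
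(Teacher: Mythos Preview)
Your proposal is correct and follows essentially the same approach as the paper: define the ring map induced by the inclusion $E(\A_X)\hookrightarrow E(\A)$, then compare nbc-bases to see that the top-degree piece maps isomorphically onto $\VG(\A)_X$. The paper's proof is terser---it simply asserts that both sides have nbc-bases indexed by the same monomials $e_M$ with $\cap M=X$---whereas you supply the verification that these index sets coincide and the check that $\phi^k$ actually lands in the summand $\VG(\A)_X$ (via the vanishing of $e_M$ for dependent $M$); these are details the paper leaves to the reader.
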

\begin{proof}
 Let $\mathfrak{i}$ be the natural inclusion $\mathfrak{i}: E(\A_{X}) \xhookrightarrow{} E(\A)$. 
The relations in $\I(\A_{X})$ hold in $\I(\A)$, and thus $\mathfrak{i}(\I(A_{X})) \subset \I(A)$, inducing a map 
 \[\mathfrak{i}: \VG(\A_{X}) \to \VG(\A). \]
Consider the image of $\mathfrak{i}(\VG^{k}(\A_{X}))$. Since both $\VG^{k}(\A_{X})$ and $\VG(\A)_{X}$ have a basis given by nbc-monomials $e_{M}$ with $\cap M = X$, it follows that
$\mathfrak{i}(\VG^{k}(\A_{X}))$ must map isomorphically onto $\VG(\A)_{X}$. The $N_{X}$ equivariance is clear from the definitions of $N_{X}$ and $\mathfrak{i}$.
\end{proof}

Recall that $N_{X}$ is the set-wise stabilizer of $X$. To index pieces of $\VG(\A)$ by flat orbits, define the $W$-module $\VG(\A)_{[X]} := \ind_{N_{X}}^{W} \VG(\A)_{X}$. The space $\VG(\A)_{[X]}$ will be used to describe the eigenspaces of $\T$ in the subsequent section.

\subsection{Results for arbitrary finite reflection groups}
Using the tools developed above, we may now give a description of the eigenspaces of $\T$ for any reflection group $W$. Recall that $(\R W)_{\tau_{[X]}}$ is the $\tau_{[X]}$-eigenspace of $\T$. 

Note that like $\tilde{\s}$, the element $\tilde{\T} = \varphi^{-1}(\T) \in \R \F^{W}$ acts semisimply and is separating, meaning that for $X > Y$ in $\lat(\A)$ one has $\tau_{[X]} < \tau_{[Y]}$. Thus $\tilde{\T}$ determines a unique family of flat-orbit idempotents coming from an invariant section\footnote{Recall that a family of flat orbit idempotents depends on the choice of invariant section. By contrast to the Eulerian idempotents considered in Section \ref{sec:coincidental}, the section in this context comes from $\T$, not the uniform section.} by \cite[Thm. 12.17]{aguiarmahajan}; write this family as 
$\{ \mathfrak{e}_{[X]} \}_{[X] \in \lat(\A)/W}$, where 
 \[ \tilde{\T} = \sum_{[X] \in \lat(\A)/W} \tau_{[X]}\mathfrak{e}_{[X]}. \]

\begin{theorem}\label{thm:allcox} For each $[X] \in \lat(\A)/W$, the following are isomorphic as $W$-representations:
\begin{enumerate}
    \item \label{a} $\VG(\A)_{[X]}$,
    \item\label{b} $\WH_{[X]}$,
    \item\label{c} $(\R W)_{\tau_{[X]}}$, and
    \item \label{d} $ \mathfrak{e}_{[X]}\R \ch$.
\end{enumerate}

\end{theorem}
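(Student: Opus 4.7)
The plan is to establish the four-way equivalence via two independent chains: the \emph{analytic} chain $(3) \iff (4) \iff (2)$ derived from equivariant Bidigare-Hanlon-Rockmore theory, and the \emph{geometric} chain $(1) \iff (2)$ obtained by composing Proposition \ref{prop:vgflat}, Moseley's Theorem \ref{thm:moseley}, and the Sundaram-Welker form of Goresky-MacPherson as packaged in Example \ref{ex:oddcom}. Throughout, I would pass freely between $\R \ch$ and $\R W$ using the identification of Section \ref{sec:reflarr}, and between the action of $\T$ on $\R W$ and that of $\tilde{\T}$ on $\R \ch$ via the algebra isomorphism $\varphi$ of Theorem \ref{thm:bidigare}.

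For $(3) \iff (4)$, the expression $\tilde{\T} = \sum_{[Y]} \tau_{[Y]} \frake_{[Y]}$ writes $\tilde{\T}$ as a linear combination of pairwise orthogonal idempotents summing to the identity; since the $\tau_{[Y]}$ are distinct by Proposition \ref{prop:algindbarr}, each $\frake_{[X]}$ is the spectral projector onto the $\tau_{[X]}$-eigenspace, so $\frake_{[X]} \R \ch = (\R \ch)_{\tau_{[X]}} \cong_W (\R W)_{\tau_{[X]}}$. For $(2) \iff (3)$ I would apply Theorem \ref{thm:rsw} to $u = \tilde{\T}$: the direct sum on the right-hand side of that theorem collapses to the single summand $\WH_{[X]}$, since only the orbit $[X]$ achieves the eigenvalue $\tau_{[X]}$.

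For $(1) \iff (2)$, I would build a chain of $N_X$-equivariant isomorphisms and then induce up to $W$. Setting $k = \codim(X)$, Proposition \ref{prop:vgflat} gives $\VG(\A)_X \cong_{N_X} \VG^k(\A_X)$, and Theorem \ref{thm:moseley} applied to the $N_X$-arrangement $\A_X$ (for any odd $d \geq 3$) yields $\VG^k(\A_X) \cong_{N_X} H^{k(d-1)}(\oddcom_{\A_X})$. Finally, running the Goresky-MacPherson computation of Example \ref{ex:oddcom} on $\A_X$ with group $N_X$ collapses to a single summand: $X$ is the unique codimension-$k$ flat of $\A_X$ and is set-wise fixed by $N_X$, so $H^{k(d-1)}(\oddcom_{\A_X}) \cong_{N_X} \WH_X \otimes \Det_{V/X}$. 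Inducing from $N_X$ to $W$ then delivers $\VG(\A)_{[X]} = \ind_{N_X}^W \VG(\A)_X \cong_W \ind_{N_X}^W (\WH_X \otimes \Det_{V/X}) = \WH_{[X]}$.

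The main obstacle is the bookkeeping in the geometric chain: the equivariant statements of Theorem \ref{thm:moseley} and of Example \ref{ex:oddcom} are typically phrased in terms of the reflection group attached to an arrangement, whereas here $N_X$ acts on $\A_X$ but is not in general the reflection group of $\A_X$. I would need to verify that both equivariance statements require only a group acting by linear automorphisms preserving the arrangement, and that the $\Det_{V/X}$ twist arising from the $N_X$-action on the orientation of $(X \otimes \R^d)^\dagger$ is exactly the twist baked into the definition of $\WH_{[X]}$, so that no residual sign appears upon induction.
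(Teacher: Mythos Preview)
Your proposal is correct and follows essentially the same route as the paper: $(1)\iff(2)$ via Proposition~\ref{prop:vgflat}, Theorem~\ref{thm:moseley}, and the Sundaram--Welker computation of Example~\ref{ex:oddcom} applied to $\A_X$ with the $N_X$-action, then induction to $W$; $(2)\iff(3)$ via Proposition~\ref{prop:algindbarr} and Theorem~\ref{thm:rsw}; and $(3)\iff(4)$ via the spectral decomposition of $\tilde{\T}$ along the orthogonal idempotents $\frake_{[X]}$. Your flagged bookkeeping concern about $N_X$ not being the reflection group of $\A_X$ is legitimate but already handled: Theorem~\ref{thm:moseley} is stated for any finite group acting on the arrangement (and the paper explicitly records the $N_X$-equivariant consequence immediately after that theorem), and Theorem~\ref{thm:GM}(2) likewise requires only a $W$-subspace arrangement.
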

\begin{proof}
\noindent
\begin{itemize}
    \item \eqref{a} if and only if \eqref{b}: 
Suppose $\codim(X) = k$. Since $N_{X}$ acts on $\A_{X}$, Theorem \ref{thm:moseley} implies that 
$H^{*}(\oddcom_{\A_{X}}) \cong_{N_{X}} \VG(\A_{X})$ when $d \geq 3$ and odd. Applying Theorem
\ref{thm:GM} (2) to the top cohomology group gives
\[ \WH_{X} \otimes \Det_{V/X} \cong_{N_{X}} \VG^{k}(\A_{X}). \]
Finally, apply Proposition \ref{prop:vgflat} and induce from $N_{X}$ to $W$.
    \item \eqref{b} if and only if \eqref{c}: follows immediately from Proposition \ref{prop:algindbarr} and Theorem \ref{thm:rsw}.
     \item \eqref{c} 
     if and only if \eqref{d}: follows from the fact that the $\frake_{[X]}$ are uniquely determined by $\tilde{\T}$ and $\varphi: \R \F^{W} \to \dopp$ is an isomorphism with $\varphi(\tilde{\T}) = \T$.
\end{itemize}
\end{proof}
\begin{remark}
By Theorems \ref{thm:coincidental} and \ref{thm:allcox} when $W$ is coincidental, as $W$-representations
\[ \bigoplus_{\substack{[X] \in \lat(\A)/W \\ \dim(X) = k}} (\R W)_{\tau_{[X]}}  \cong_{W} (\R W)_{\sigma_{k}} \]
despite the fact that 
\[\bigoplus_{\substack{[X] \in \lat(\A)/W \\ \dim(X) = k}} \varphi(\frake_{[X]}) \neq \varphi(\frake_{k}) \]
since the idempotents on the left-hand-side do not come from the uniform section, while the idempotents on the right-hand-side do. 
\end{remark}

\section{Future Directions} \label{sec:future}
\subsection{Description of the Eulerian representations}
In \cite{Hanlon}, Hanlon describes the characters of the Type $A$ Eulerian representations as a direct sum of induced representations of certain centralizers in $S_{n}$. Bergeron and Bergeron conjectured that there was a similar description in Type $B$ \cite[Rmk. 3.2]{Bergeron-Bergeron}, which N. Bergeron resolved in \cite{bergeron1995hyperoctahedral}. Our work raises more questions, potentially related to Conjecture 2.1 in Douglass-Pfeiffer-R{\"o}hrle \cite{douglass2014cohomology}.
\begin{question}
Is there a uniform description of either $ (\R W)_{\sigma_{k}}$ for $W$ coincidental or $(\R W)_{\tau_{[X]}}$ for any $W$ as (direct sums of) induced representations from centralizers in $W$? Is there a formula for their characters?
\end{question}
\subsection{Connections to Solomon's descent algebra}
In \cite{cellini1995general}, Cellini constructs commutative subalgebras\footnote{In fact, her subalgebras are in $\dopp$.} of $\Q W$ for every Weyl group $W$; in Type $C$, her subalgebra is semisimple and contains $\E(B_{n})$. A natural question is therefore:
\begin{question}\label{q:cellini}
Can the subalgebras defined by Cellini be described in terms of some family of flat-orbit idempotents $\{ \frake_{[X]}\}_{[X] \in\lat(\A)/W}$ and ``Barr-like'' element? 
\end{question}
Progress on Question \ref{q:cellini} could determine whether the subalgebras she introduces are semisimple for other types. Relatedly, $\R \T \subset \D$, the subalgebra generated by the ``Barr-like" element $\T$ from Section~\ref{Barr-like-element-section}, is also commutative.
\begin{question}
    Is $\R \T$ a maximal commutative subalgebra of $\D$?
\end{question}
\subsection{Extension to complex reflection groups}
Though we have only discussed real coincidental reflection groups, there is a notion of coincidental type for complex reflection groups as well. Nonreal reflection groups of coincidental type are exactly what have been called the (nonreal) {\it Shephard groups}, and share many properties with real coincidental reflection groups. See A. Miller \cite{Miller_Foulkes}, Reiner-Shepler-Sommers \cite{reiner2019invariant} for more detail. This motivates the following question:

\begin{question}
Can the Eulerian representations be generalized to all Shephard groups?
\end{question}
The complex reflection group $G(m,1,n) \cong \Z_{m} \wr S_{n}$ seems like a good place to start, due to promising work by A. Miller \cite{Miller_Foulkes} using results by Moynihan \cite{Moynihan} and Steingr\'{i}msson \cite{steingrimsson}.
\bibliographystyle{abbrv}
\bibliography{bibliography}

\begin{thebibliography}{10}

\bibitem{abramenko1994walls}
P.~Abramenko.
\newblock Walls in {C}oxeter complexes.
\newblock {\em Geometriae Dedicata}, 49(1):71--84, 1994.

\bibitem{aguiarmahajan}
M.~Aguiar and S.~Mahajan.
\newblock {\em Topics in hyperplane arrangements}, volume 226.
\newblock American Mathematical Soc., 2017.

\bibitem{arnold}
I.~Arnol'd, Vladimir.
\newblock The cohomology ring of the group of dyed braids.
\newblock {\em Mat. Zametki}, 5:227--231, 1969.

\bibitem{barcelo1990action}
H.~Barcelo.
\newblock On the action of the symmetric group on the free {L}ie algebra and
  the partition lattice.
\newblock {\em Journal of Combinatorial Theory, Series A}, 55(1):93--129, 1990.

\bibitem{barcelo1995non}
H.~Barcelo and A.~Goupil.
\newblock Non-broken circuits of reflection groups and factorization in
  ${D}_{n}$.
\newblock {\em Israel Journal of Mathematics}, 91(1-3):285--306, 1995.

\bibitem{barcelo1999lattices}
H.~Barcelo and E.~Ihrig.
\newblock Lattices of parabolic subgroups in connection with hyperplane
  arrangements.
\newblock {\em Journal of Algebraic Combinatorics}, 9(1):5--24, 1999.

\bibitem{barr}
M.~Barr.
\newblock Harrison homology, {H}ochschild homology and triples.
\newblock {\em Journal of Algebra}, 8(3):314--323, 1968.

\bibitem{berg2011primitive}
C.~Berg, N.~Bergeron, S.~Bhargava, and F.~Saliola.
\newblock Primitive orthogonal idempotents for {R}-trivial monoids.
\newblock {\em Journal of Algebra}, 348(1):446--461, 2011.

\bibitem{Bergeron-Bergeron}
F.~Bergeron and N.~Bergeron.
\newblock Orthogonal idempotents in the descent algebra of {$B_n$} and
  applications.
\newblock {\em Journal of Pure and Applied Algebra}, 79(2):109--129, 1992.

\bibitem{bergeron1992decomposition}
F.~Bergeron, N.~Bergeron, R.~B. Howlett, and D.~E. Taylor.
\newblock A decomposition of the descent algebra of a finite {C}oxeter group.
\newblock {\em Journal of Algebraic Combinatorics}, 1(1):23--44, 1992.

\bibitem{bergeron1991hyperoctahedrallie}
N.~Bergeron.
\newblock A hyperoctahedral analogue of the free {L}ie algebra.
\newblock {\em Journal of Combinatorial Theory, Series A}, 58(2):256--278,
  1991.

\bibitem{bergeron1995hyperoctahedral}
N.~Bergeron.
\newblock Hyperoctahedral operations on {H}ochschild homology.
\newblock {\em Advances in Mathematics}, 110(2):255--276, 1995.

\bibitem{bidigare}
P.~T. Bidigare.
\newblock Hyperplane arrangement face algebras and their associated {M}arkov
  chains.
\newblock {\em PhD Thesis--University of Michigan}, 1998.

\bibitem{BHR}
P.~T. Bidigare, P.~Hanlon, and D.~Rockmore.
\newblock A combinatorial description of the spectrum for the {T}setlin library
  and its generalization to hyperplane arrangements.
\newblock {\em Duke Mathematical Journal}, 99(1):135--174, 1999.

\bibitem{bjorner1992homology}
A.~Bj{\"o}rner.
\newblock The homology and shellability of matroids and geometric lattices.
\newblock {\em Matroid applications}, 40:226--283, 1992.

\bibitem{cellini1995general}
P.~Cellini.
\newblock A general commutative descent algebra.
\newblock {\em Journal of Algebra}, 3(175):1015--1026, 1995.

\bibitem{cohen}
F.~R. Cohen, T.~J. Lada, and P.~J. May.
\newblock {\em The homology of iterated loop spaces}, volume 533.
\newblock Springer, 2007.

\bibitem{cordovil2002commutative}
R.~Cordovil.
\newblock A commutative algebra for oriented matroids.
\newblock {\em Discrete \& Computational Geometry}, 27(1):73--84, 2002.

\bibitem{douglass2014cohomology}
J.~Douglass, G.~Pfeiffer, and G.~R{\"o}hrle.
\newblock Cohomology of {C}oxeter arrangements and {S}olomon’s descent
  algebra.
\newblock {\em Transactions of the American Mathematical Society},
  366(10):5379--5407, 2014.

\bibitem{Early-Reiner}
N.~Early and V.~Reiner.
\newblock On configuration spaces and {W}hitehouse's lifts of the {E}ulerian
  representations.
\newblock {\em Journal of Pure and Applied Algebra}, 2019.

\bibitem{garsia1990combinatorics}
A.~M. Garsia.
\newblock Combinatorics of the free {L}ie algebra and the symmetric group.
\newblock In {\em Analysis, et cetera}, pages 309--382. Elsevier, 1990.

\bibitem{garsia}
A.~M. Garsia and C.~Reutenauer.
\newblock A decomposition of {S}olomon's descent algebra.
\newblock {\em Advances in Mathematics}, 77(2):189--262, 1989.

\bibitem{Gerstenhaber-Schack}
M.~Gerstenhaber and S.~D. Schack.
\newblock A {H}odge-type decomposition for commutative algebra cohomology.
\newblock {\em Journal of Pure and Applied Algebra}, 48(1-2):229--247, 1987.

\bibitem{goresky1988stratified}
M.~Goresky and R.~MacPherson.
\newblock Stratified {M}orse theory.
\newblock In {\em Stratified Morse Theory}, pages 3--22. Springer, 1988.

\bibitem{Hanlon}
P.~Hanlon.
\newblock The action of {$ S_n $} on the components of the {H}odge
  decomposition of {H}ochschild homology.
\newblock {\em The Michigan Mathematical Journal}, 37(1):105--124, 1990.

\bibitem{hivert20111}
F.~Hivert, J.-G. Luque, J.-C. Novelli, and J.-Y. Thibon.
\newblock The $(1-{E})$-transform in combinatorial {H}opf algebras.
\newblock {\em Journal of Algebraic Combinatorics}, 33(2):277--312, 2011.

\bibitem{hu1994homology}
Y.~Hu.
\newblock On the homology of complements of arrangements of subspaces and
  spheres.
\newblock {\em Proceedings of the American Mathematical Society},
  122(1):285--290, 1994.

\bibitem{hyde}
T.~Hyde.
\newblock {Polynomial Factorization Statistics and Point Configurations in
  $\mathbb{R}^{3}$}.
\newblock {\em International Mathematics Research Notices}, 12 2018.

\bibitem{joyal1986foncteurs}
A.~Joyal.
\newblock Foncteurs analytiques et especes de structures.
\newblock In {\em Combinatoire {\'e}num{\'e}rative}, pages 126--159. Springer,
  1986.

\bibitem{lodayoperations}
J.-L. Loday.
\newblock Op{\'e}rations sur l'homologie cyclique des alg{\`e}bres
  commutatives.
\newblock {\em Inventiones mathematicae}, 96(1):205--230, 1989.

\bibitem{Miller_Foulkes}
A.~Miller.
\newblock Foulkes characters for complex reflection groups.
\newblock {\em Proceedings of the American Mathematical Society},
  143(8):3281--3293, 2015.

\bibitem{moseley2017equivariant}
D.~Moseley.
\newblock Equivariant cohomology and the {V}archenko--{G}elfand filtration.
\newblock {\em Journal of Algebra}, 472:95--114, 2017.

\bibitem{moseley-proudfoot-young}
D.~Moseley, N.~Proudfoot, and B.~Young.
\newblock The {O}rlik-{T}erao algebra and the cohomology of configuration
  space.
\newblock {\em Experimental Mathematics}, 26(3):373--380, 2017.

\bibitem{Moynihan}
M.~Moynihan.
\newblock The colored {E}ulerian descent algebra.
\newblock {\em Journal of Algebraic Combinatorics}, 42(3):671--694, 2015.

\bibitem{novelli2010representation}
J.-C. Novelli, F.~Saliola, and J.-Y. Thibon.
\newblock Representation theory of the higher-order peak algebras.
\newblock {\em Journal of Algebraic Combinatorics}, 32(4):465--495, 2010.

\bibitem{orlik1994commutative}
P.~Orlik and H.~Terao.
\newblock Commutative algebras for arrangements.
\newblock {\em Nagoya Mathematical Journal}, 134:65--73, 1994.

\bibitem{orlikterao}
P.~Orlik and H.~Terao.
\newblock {\em Arrangements of hyperplanes}, volume 300.
\newblock Springer Science \& Business Media, 2013.

\bibitem{petersen2020factorization}
D.~Petersen and P.~Tosteson.
\newblock Factorization statistics and bug-eyed configuration spaces.
\newblock {\em to appear in Geometry and Topology}, 2020.

\bibitem{reiner2019invariant}
V.~Reiner, A.~V. Shepler, and E.~Sommers.
\newblock Invariant theory for coincidental complex reflection groups.
\newblock {\em Mathematische Zeitschrift}, 298(1):787--820, 2021.

\bibitem{RSW}
V.~S. Reiner, F.~Saliola, and V.~Welker.
\newblock Spectra of symmetrized shuffling operators.
\newblock {\em Memoirs of the American Mathematical Society}, 228(1072):1--121,
  2014.

\bibitem{reutenauer1986theorem}
C.~Reutenauer.
\newblock Theorem of poincar{\'e}-birkhoff-witt, logarithm and symmetric group
  representations of degrees equal to {S}tirling numbers.
\newblock In {\em Combinatoire {\'e}num{\'e}rative}, pages 267--284. Springer,
  1986.

\bibitem{reutenauer2001free}
C.~Reutenauer.
\newblock {\em Free {L}ie algebras}, volume~7 of {\em London Mathematical
  Society Monographs. New Series}.
\newblock Oxford Science Publications. The Clarendon Press, Oxford University
  Press, 1993.

\bibitem{saliolasemigroup}
F.~V. Saliola.
\newblock The quiver of the semigroup algebra of a left regular band.
\newblock {\em International Journal of Algebra and Computation},
  17(8):1593--1610, 2007.

\bibitem{sal2008quiv}
F.~V. Saliola.
\newblock On the quiver of the descent algebra.
\newblock {\em Journal of Algebra}, 320(11):3866--3894, 2008.

\bibitem{saliola2009face}
F.~V. Saliola.
\newblock The face semigroup algebra of a hyperplane arrangement.
\newblock {\em Canadian Journal of Mathematics}, 61(4):904--929, 2009.

\bibitem{saliola2012eigenvectors}
F.~V. Saliola.
\newblock Eigenvectors for a random walk on a left-regular band.
\newblock {\em Advances in Applied Mathematics}, 48(2):306--311, 2012.

\bibitem{solomon}
L.~Solomon.
\newblock A {M}ackey formula in the group ring of a {C}oxeter group.
\newblock {\em Journal of Algebra}, 41(2):255--264, 1976.

\bibitem{stanley2017supersolvable}
R.~Stanley.
\newblock Supersolvable lattices 1.
\newblock {\em Selected Works of Richard P. Stanley}, 25:59, 2017.

\bibitem{steingrimsson}
E.~Steingr{\i}msson.
\newblock Permutation statistics of indexed permutations.
\newblock {\em European Journal of Combinatorics}, 15(2):187--205, 1994.

\bibitem{sundaram2018}
S.~Sundaram.
\newblock On a curious variant of the {$ S_n $}-module {$ Lie_n$}.
\newblock {\em Algebraic Combinatorics}, 3(4):985--1009, 2020.

\bibitem{sundaramwelker}
S.~Sundaram and V.~Welker.
\newblock Group actions on arrangements of linear subspaces and applications to
  configuration spaces.
\newblock {\em Transactions of the American Mathematical Society},
  349(4):1389--1420, 1997.

\bibitem{tits1974buildings}
J.~Tits.
\newblock {\em Buildings of Spherical Type and Finite {B}{N}-Pairs}, volume
  386.
\newblock Springer Science \& Business Media, 1974.

\bibitem{varchenkogelfand}
A.~N. Varchenko and I.~M. Gel{'}fand.
\newblock Heaviside functions of a configuration of hyperplanes.
\newblock {\em Funktsional Anal. i Prilozhen}, 21(4):1--18, 1987.

\bibitem{vasilev1994complements}
V.~A. Vassiliev.
\newblock {\em Complements of discriminants of smooth maps: topology and
  applications}, volume~98.
\newblock American Mathematical Soc., 1994.

\bibitem{wachs1998co}
M.~L. Wachs.
\newblock On the (co) homology of the partition lattice and the free {L}ie
  algebra.
\newblock {\em Discrete Mathematics}, 193(1-3):287--319, 1998.

\bibitem{wachsdowling}
M.~L. Wachs and E.~Gottlieb.
\newblock Cohomology of {D}owling lattices and {L}ie (super)algebras.
\newblock {\em Advances in Applied Math}, 24(4):301--336, 2000.

\bibitem{xico}
M.~A. Xicotencatl~Merino.
\newblock Orbit configuration spaces, infinitesimal braid relations in homology
  and equivariant loop spaces.
\newblock {\em PhD Thesis--University of Rochester.}, 1997.

\bibitem{ziegler1991homotopy}
G.~M. Ziegler and R.~T. Zivaljevic.
\newblock Homotopy types of subspace arrangements via diagrams of spaces.
\newblock {\em Mathematische Annalen}, 295(3):527--548, 1993.

\end{thebibliography}
\end{document}